\newcommand\eps{\varepsilon}
\newcommand\R{{\mathbb{R}}}
\theoremstyle{plain}
  \newtheorem{theorem}[subsection]{Theorem}
  \newtheorem{proposition}[subsection]{Proposition}
  \newtheorem{lemma}[subsection]{Lemma}
  \newtheorem{mainthm}{Theorem}
\theoremstyle{remark}
  \newtheorem{remark}[subsection]{Remark}
\theoremstyle{definition}
  \newtheorem{definition}[subsection]{Definition}
\begin{document}

\title[]{Conditional existence of maximizers for the Tomas-Stein inequality for the sphere}
\author{Shuanglin Shao}
\address{Department of Mathematics, University of Kansas, Lawrence, KS 66045, USA}
\email{slshao@ku.edu}
\author{Ming Wang}
\address{Department of Mathematics, University of Kansas, Lawrence, KS 66045, USA}
\email{mwang@ku.edu}

\vspace{-0.1in}
\date{\today}
\begin{abstract} The Tomas-Stein inequality for a compact subset of the sphere $\Gamma := \{x=(x', x_{d+1}): \, x\in S^d, \, |x'|\le \frac 12\}$ states that the mapping $f\mapsto \widehat{f\sigma}$ is bounded from $L^2(\Gamma,\sigma)$ to $L^{2+4/d}(\R^{d+1})$. Then conditional on a strict comparison between the best constants for the sphere and for the Strichartz inequality for the Schr\"odinger equations, we prove that there exist functions which extremize this inequality, and any extremising sequence has a subsequence which converges to an extremizer. The method is based on the refined Tomas-Stein inequality for the sphere and the profile decompositions. The key ingredient to establish orthogonality in profile decompositions is that we use Tao's sharp bilinear restriction theorem for the paraboloids beyond the Tomas-Stein range.  Similar results have been previously established by Frank, Lieb and Sabin \cite{Frank-Lieb-Sabin:2007:maxi-sphere-2d}, where they used the method of the missing mass. 
\end{abstract}

\subjclass{Primary 42B10; Secondary 35Q55}
\keywords{The Tomas-Stein inequality, Profile Decompositions, Extremizers.}
\maketitle

\section{Introduction}\label{Monkey-Eye}
The Tomas-Stein's inequality for the sphere or the adjoint Fourier restriction inequality for a compact subset of the sphere $\Gamma\subset S^d$ asserts that
\begin{equation}\label{eq-1}
\|\widehat{f\sigma}\|_{L^{2+4/d}(\R^{d+1})} \le \mathcal{R}\|f\|_{L^2(\Gamma,\sigma)}.
\end{equation} See e.g. \cite{Stein:1993}. Here the constant $\mathcal{R}>0$ is defined to be the optimal constant
\begin{equation}\label{eq-2}
\begin{split}
 \mathcal{R}:=\sup\{\|\widehat{f\sigma}\|_{L^{2+4/d}(\R^{d+1})}:\|f\|_{L^2(\Gamma,\sigma)}=
1\}, \\
\Gamma := \{x=(x', x_{d+1}): \, x\in S^d, \, |x'|\le \frac 12\}. 
\end{split}
\end{equation} and $\sigma$ denotes the surface measure on $\Gamma\subset S^d$, and the Fourier transform is defined by
\begin{equation}\label{eq-3}
\widehat{f\sigma}(\xi ):=\frac {1}{(2\pi )^{\frac {d+1}{2}}} \int_{\Gamma} e^{-i\xi \cdot x  }f(x)d\sigma (x).
\end{equation}
\begin{definition}\label{bass-hook}
A function $f\in L^2(\Gamma, \sigma)$ is said to be an extremal for \eqref{eq-1} if $f\neq 0$ a. e., and
\begin{equation}\label{eq-4}
\|\widehat{f\sigma}\|_{L^{2+4/d}(\R^{d+1})} = \mathcal{R}\|f\|_{L^2(\Gamma, \sigma)}.
\end{equation}
An extremising sequence for the inequality \eqref{eq-1} is a sequence $\{f_\nu\}\in L^2(\Gamma, \sigma)$ satisfies  $\|f_\nu\|_{L^2(\Gamma, \sigma)}=1$ and $\lim_{\nu\to\infty }\|\widehat{f_\nu\sigma}\|_{L^{2+4/d}(\R^{d+1})} =\mathcal{R}$.
An extremising sequence is said to be precompact if any subsequence has a sub-subsequence which is Cauchy in $L^2(\Gamma, \sigma)$.
\end{definition}

To prove the existence of extermizers for the Tomas-Stein inequality, we need to establish the compactness of a complex $L^2(\Gamma, \sigma)$ extremizing sequence. The noncompactness of the Tomas-Stein inequality are due to the modulation symmetry: $x\in S^d\mapsto e^{i x\cdot \xi}$ given $\xi\in \mathbb{R}^{d+1}$,  and the approximate scaling symmetry. The symmetry $f\mapsto f\cdot e^{ix\cdot \xi}$ merits further discussion and is closely related to the approximate symmetry. For the paraboloid $\mathbb{P}^d:= \{(y_1, \cdots, y_d, y_{d+1}): y_{d+1} = \frac 12(y_1^2+\cdots y_{d+1}^2)\}$, the analogy of the unimodular exponential $e^{ix\cdot \xi}$are quadratic exponentials $e^{i\eta\cdot x+it\tau |x|^2}$ with $(\eta, \tau)\in \mathbb{R}^{d+1}$, where $\xi, \eta\in \mathbb{R}^d$ ranges over a $d$-dimensional space. To see the analogy, consider a small neighborhood of $(0,\cdots, 0, 1) \in S^d$, equipped with coordinates $x'\in \mathbb{R}^d$ so that $x= (x',x_{d+1})$. Then for $\xi=(0,\cdots, 0, \lambda)$, $e^{ix\cdot \xi } = \exp\bigl(i\lambda (1-\frac 12 |x'|^2+O(|x'|^4))\bigr)$ for small $x'$; thus for small $x'$, one has essentially quadratic oscillation.  The presence of these symmetries among the extremizers for $\mathbb{P}^d$ implies that, in the language of the concentration-compactness theory \cite{Christ-Shao:extremal-for-sphere-restriction-I-existence, Christ-Shao:extremal-for-sphere-restriction-II-characterizations, kunze, Foschi:2007:maxi-strichartz-2d, Hundertmark-Zharnitsky:2006:maximizers-Strichartz-low-dimensions, Shao:2009}, an extremizer $f$ can be tight at a scale $r$, and $\hat{f}$ can simultaneously be tight at a scale $\hat{r}$, with the product $r\cdot \hat{r}$ arbitrarily large and thus loss of compactness is manifested, see Equation \eqref{eq-51}.

We define the optimal constant for the Strichartz inequality for the Schr\"odinger equations. 
\begin{equation}\label{Schrodinger-level}
\mathcal{R}_\mathbf{P} = (2\pi)^{-\frac 12 }\dfrac {\left( \int_{\mathbb{R}^{d+1}} |e^{it\Delta /2} \phi_G(x) |^{2+\frac 4d} dx dt \right)^{\frac {d}{2(d+2)}} }{\|\phi_G\|_{L^2(\mathbb{R})}}, \text{ where } \phi_G (x) = e^{-|x|^2/2}.
\end{equation}

\begin{theorem}\label{thm-existence} Assume that $\mathcal{R}>\mathcal{R}_{\mathbf{P}}$. Then there exists an extremal function $f\in L^2(\Gamma, \sigma)$ for \eqref{eq-1} by showing that any extremising sequence $\{f_\nu\}$ is precompact in $L^2(\Gamma, \sigma)$. This extremizer can be chosen to be a smooth function. 
\end{theorem}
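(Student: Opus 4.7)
The plan is a concentration-compactness argument organized around a profile decomposition. Let $\{f_\nu\}$ be an extremising sequence, $\|f_\nu\|_{L^2(\Gamma,\sigma)}=1$ and $\|\widehat{f_\nu\sigma}\|_{L^{2+4/d}}\to \mathcal{R}$. The first task is a refined Tomas-Stein inequality on $\Gamma$, obtained by Whitney-decomposing $\Gamma$ into spherical caps and comparing the contribution of each cap with that of its osculating paraboloid; this yields, after passing to a subsequence, a profile decomposition
\[
f_\nu = \sum_{j=1}^{J} \mathcal{T}_\nu^j \phi^j + w_\nu^J,\qquad \limsup_{J\to\infty}\limsup_{\nu\to\infty}\bigl\|\widehat{w_\nu^J\sigma}\bigr\|_{L^{2+4/d}}=0,
\]
where each operator $\mathcal{T}_\nu^j$ is built from the (approximate) symmetries of \eqref{eq-1}: translations of the base point along $\Gamma$, modulations $f\mapsto e^{ix\cdot\xi_\nu^j}f$ with $\xi_\nu^j\in\R^{d+1}$, and a rescaling parameter $\lambda_\nu^j\ge 1$ encoding the paraboloidal zoom-in discussed in the introduction. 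The decomposition comes with the Pythagorean identity $\sum_j \|\phi^j\|_{L^2}^2 + \|w_\nu^J\|_{L^2}^2 = 1 + o_\nu(1)$.

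The crucial step, and the principal technical obstacle, is the asymptotic orthogonality of the profiles in the $L^{2+4/d}$ norm,
\[
\bigl\|\widehat{f_\nu\sigma}\bigr\|_{L^{2+4/d}}^{2+4/d} = \sum_{j=1}^{J} \bigl\|\widehat{(\mathcal{T}_\nu^j\phi^j)\sigma}\bigr\|_{L^{2+4/d}}^{2+4/d} + o_\nu(1) + o_J(1).
\]
Since $2+4/d$ is not in general an even integer, expanding the power cannot be reduced to counting paired interactions; instead I would apply Tao's sharp bilinear restriction theorem on the paraboloid (transferred locally to $\Gamma$ after a change of variables), which is valid on a range strictly containing the Tomas-Stein exponent. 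For two profiles whose parameters are asymptotically orthogonal (that is, $j\ne k$ in the equivalence classes of $(\xi_\nu^\cdot,\lambda_\nu^\cdot)$), the corresponding extension operators are, at large $\nu$, Fourier-supported on transversal caps of $\Gamma$, and the bilinear estimate forces the $L^{1+2/d}$ norm of their product to be negligible. Summation and a density argument then yield the displayed decoupling.

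Next, classify each profile by its scale. If $\lambda_\nu^j$ remains bounded, then $\mathcal{T}_\nu^j\phi^j$ stays a genuine function on $\Gamma$ up to the compact symmetries, and its extension norm is bounded by $\mathcal{R}\|\phi^j\|_{L^2}$. If $\lambda_\nu^j\to\infty$, the rescaled sphere flattens to the paraboloid $\mathbb{P}^d$, and a change of variables identifies the asymptotic extension with an $L^2$-Strichartz extension for $e^{it\Delta/2}$, so that $\lim_\nu\|\widehat{(\mathcal{T}_\nu^j\phi^j)\sigma}\|_{L^{2+4/d}}\le \mathcal{R}_{\mathbf{P}}\|\phi^j\|_{L^2}$. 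Writing $C_j\in\{\mathcal{R},\mathcal{R}_{\mathbf{P}}\}$ for the corresponding constant and using $\sum_j\|\phi^j\|_{L^2}^2\le 1$ together with $\|\phi^j\|_{L^2}\le 1$,
\[
\mathcal{R}^{2+4/d} \le \sum_j C_j^{2+4/d}\|\phi^j\|_{L^2}^{2+4/d} \le \max_j\bigl(C_j^{2+4/d}\|\phi^j\|_{L^2}^{4/d}\bigr)\sum_j\|\phi^j\|_{L^2}^2.
\]
Under the standing hypothesis $\mathcal{R}>\mathcal{R}_{\mathbf{P}}$, any paraboloidal profile would give a strictly smaller contribution, so the maximum must be attained at a spherical profile $j_0$, and saturation of the chain forces $\|\phi^{j_0}\|_{L^2}=1$, the parameter $\lambda_\nu^{j_0}$ to stay bounded, and $\phi^j=0$ for $j\ne j_0$.

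Consequently, after undoing the compact-orbit symmetry $\mathcal{T}_\nu^{j_0}$, the sequence $f_\nu$ converges in $L^2(\Gamma,\sigma)$ to the extremiser $\phi^{j_0}$, establishing precompactness and existence. Smoothness then follows from the Euler-Lagrange equation, which in this setting takes the form $\bigl(|\widehat{\phi\sigma}|^{4/d}\widehat{\phi\sigma}\bigr)^{\vee}\big|_\Gamma = c\,\phi$; a standard bootstrap using the Tomas-Stein estimate and the smoothness of $\Gamma$ inside $S^d$ upgrades $\phi\in L^2$ to $\phi\in C^\infty$.
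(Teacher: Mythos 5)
Your proposal is correct and follows essentially the same route as the paper: refined Tomas--Stein via Whitney/cap decomposition, a profile decomposition built from the approximate (cap-rescaling) and exact (modulation) symmetries, Tao's bilinear restriction theorem beyond the Tomas--Stein exponent to obtain $L^{1+2/d}$ asymptotic orthogonality of distinct profiles, and a dichotomy between bounded-scale ``spherical'' profiles (bounded by $\mathcal{R}$) and vanishing-scale ``paraboloidal'' profiles (bounded by $\mathcal{R}_{\mathbf{P}}$), with the chain of inequalities and the strict hypothesis $\mathcal{R}>\mathcal{R}_{\mathbf{P}}$ forcing a single spherical profile of full mass. Your Hölder-style bound $\sum_j C_j^{2+4/d}\|\phi^j\|^{2+4/d} \le \max_j\bigl(C_j^{2+4/d}\|\phi^j\|^{4/d}\bigr)\sum_j\|\phi^j\|^2$ is a minor repackaging of the paper's $\ell^{1+2/d}\hookrightarrow\ell^1$ step and reaches the same conclusion. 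The one genuine divergence is your smoothness argument: the paper simply observes that the surviving profile $\phi^{j_0}$ is already smooth and compactly supported (a fact baked into its profile decomposition) and writes down the extremizer $(r^{j_0})^{-d/2}\phi^{j_0}((r^{j_0})^{-1}y)\sqrt{1-|y|^2}$ directly, whereas you invoke the Euler--Lagrange equation $\bigl(|\widehat{\phi\sigma}|^{4/d}\widehat{\phi\sigma}\bigr)^\vee|_\Gamma=c\,\phi$ and bootstrap. Your route is more self-contained and arguably more robust, since it does not rely on tracking regularity through the profile machinery; the paper's is shorter but leans on the regularity claim for $\phi^j$ asserted in Proposition \ref{prop-decomp}.
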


\begin{remark}\label{Calderon-Zygmund}    
In \cite{Frank-Lieb-Sabin:2007:maxi-sphere-2d}, Frank, Lieb and Sabin assume that Gaussian functions are the extremizers optimizing \eqref{Schrodinger-level} and prove that $\mathcal{R}>\mathcal{R}_{\mathbf{P}}$ by a perturbative analysis. Similar analysis has previously appeared in \cite{Christ-Shao:extremal-for-sphere-restriction-I-existence, Christ-Shao:extremal-for-sphere-restriction-II-characterizations}.
\end{remark}

Let $\mathbb{P}^d$ be the paraboloid introduced above. Let $\sigma_{\mathbb{P}^d}$ be the measure $d\sigma_{\mathbb{P}^d} = dx_1\cdots dx_d$ on $\mathbb{P}^d$. The mapping $f\mapsto \widehat{f\sigma_{\mathbb{P}^d}}$ is likewise bounded from $L^2(\mathbb{P}^d, \sigma_{\mathbb{P}^d})$ to $L^{2+\frac 4d}_{t,x} (\mathbb{R}\times \mathbb{R}^d)$. Using $\mathcal{R}_{\mathbf{P}}$, 
\begin{align*}
e^{it\Delta/2}f(t,x)&:= \int_{\mathbb{R}^d } e^{ix\cdot y-i\frac {t|y|^2}{2}}f(y)dy.  \\
\|e^{it\Delta/2} f \|_{L^{2+\frac 4d}_{t,x}(\mathbb{R}\times \mathbb{R}^d)} &\le (2\pi)^{1/2}\mathcal{R}_{\mathbf{P}} \|f\|_{L^2(\mathbb{R}^d)}. 
\end{align*}
When $d=1,2$, Foschi \cite{Foschi:2007:maxi-strichartz-2d} has proved that extremals exist for this inequality, and moreover that every radial Gaussian is an extremal. Alternative proofs are given by Hundertmark and Zharnitisky \cite{Hundertmark-Zharnitsky:2006:maximizers-Strichartz-low-dimensions}  and Bennett, Bez, Carbery and Hundertmark \cite{Bennett-Bez-Carbery-Hundertmark: heat-flow}. The simple relation $\mathcal{R}\ge \mathcal{R}_{\mathbf{P}}$ is of significance. The relation follows from examination of a suitable sequence of trial functions $f_\nu$, such that $f_\nu(x)^2$ converges weakly to a Dirac mass on $S^d$, and $f_\nu$ is approximately a Gaussian in suitably rescaled coordinates depending on $\nu$. It is essential for this comparison that $\mathbb{P}^d$ has the same curvature at $0$ as $S^d$, which explains the factors of $\frac 12 $ in the definition of $\mathbb{P}^d$. 

Kunze \cite{kunze} is the first to discuss the existence of extremizers for the Strichartz/Fourier restriction inequalities and proved the existence of extremizers for the parabola in $\mathbb{R}^2$, and showed that any nonnegative extremizing sequence is precompact modulo the action of the natural symmetry group of the inequality. Several papers have subsequently dealt with related problems, in some cases determining all the extremizers explicitly \cite{Foschi:2007:maxi-strichartz-2d, Hundertmark-Zharnitsky:2006:maximizers-Strichartz-low-dimensions, Bennett-Bez-Carbery-Hundertmark: heat-flow}, in other cases merely proving existence \cite{Shao:2009}. A powerful result which leads easily in \cite{Shao:2009} to existence of extremizers is the profile decomposition \cite{Bahouri-Gerard:1999:profile-wave, Begout-Vargas:2007:profile-schrod-higher-d}. Of these works, the one closely related to ours is that of \cite{Bahouri-Gerard:1999:profile-wave, Begout-Vargas:2007:profile-schrod-higher-d, Shao:2009, Shao:2009profilesAiry}. One difficulty which we face is the lack of exact scaling symmetries. In some facets of the analysis this is merely a technical obstacle, but it is bound up with the most essential obstacle, which is the possibility that the optimal constant might be achieved only in a limit where $|f_\nu|^2$ tends to a Dirac mass on $\Gamma\subset S^d$. 

Our analysis follows the general profile decomposition framework in \cite{Christ-Shao:extremal-for-sphere-restriction-I-existence, Bahouri-Gerard:1999:profile-wave, Begout-Vargas:2007:profile-schrod-higher-d, Shao:2009, Shao:2009profilesAiry}. One earlier example is that the author proved the existence of extermizers for the one-dimensional Tomas-Stein inequality \cite{Shao:2016TS}; which has precursors in \cite{Christ-Shao:extremal-for-sphere-restriction-I-existence} and \cite{Christ-Shao:extremal-for-sphere-restriction-II-characterizations}. In the latter two works, Christ and the first author proved existence of extremals for the Tomas-Stein inequality for $S^2$, and obtained some exact characterizations of nonnegative extremals, complex extremals and complex extremising sequences. The general idea is the profile decomposition or the concentration-compactness approach \cite{lions1984a,lions1984b, lions1985a,lions1985b}. Roughly speaking, the profile decomposition starts with Bourgain's $X_p$-refinement of the Tomas-Stein inequality for the sphere and obtains a near-extremizer decomposition.  Each piece will be a $L^2$-normalized cap on the sphere. The second decomposition of this cap to upgrade the weak convergence to strong convergence is to display the approximate scaling symmetry, which resembles the Schr\"odinger behavior. The key to prove the orthogonality of profiles is to using the Tao bilinear restriction estimates for elliptic surfaces, see Proposition \ref{le-bilinear-sphere}. The novelty of our work is that we apply this estimate in the concentration-compactness theory of the maximizer problem for the Fourier restriction inequalities to establishing orthogonality of profiles. The analysis of the extremizer problem for the Tomas-Stein inequality for the two dimensional sphere in  \cite{Christ-Shao:extremal-for-sphere-restriction-I-existence} has extended to other manifolds \cite{Quilodran:2015, Carneiro2019},  and other norms (nonendpoint or mixed-norms) in \cite{Carneiro2019, COeS15, COeSS19}. The authors use mostly the techniques from calculus of variations, functional analysis and probability theory, though our main tools are, the central idea of Fourier Analysis and the refined linear restriction estimate (Lemma \ref{le-refinement-of-Tomas-Stein}) in the spirit of Bourgain \cite{Bourgain:1998:refined-Strichartz-NLS}, and the bilinear Fourier restriction estimates (Theorem \ref{thm-1}, Lemma \ref{le-bilinear-sphere}, and Theorem \ref{prop-full-decomp}) in the spirit of Wolff and Tao, \cite{Tao:2003:paraboloid-restri}. See the references therein. 

The current work aims to address the same question in \cite{Christ-Shao:extremal-for-sphere-restriction-I-existence} in higher dimensions. Note again that Frank, Lieb and Sabin answered this question in  \cite{Frank-Lieb-Sabin:2007:maxi-sphere-2d}, though our method of profile decompositions is different from theirs.  The first major difference is Frank, Lieb, Sabin uses the refined Strichartz inuequality, though we are using the refined Bourgain's Strichartz estimate in form of the $X_p$ spaces like what we did in  \cite{Christ-Shao:extremal-for-sphere-restriction-I-existence, Shao:2016TS}. The second main difference is that Frank, Lieb and Sabin uses the Brezis-Lieb lemma to prove the existence of maximizers, which they call it ``the method of missing mass" in \cite{Frank-Lieb-Sabin:2007:maxi-sphere-2d}. We are still using Tao's bilinear restriction estimate \cite{Tao:2003:paraboloid-restri} by approximating the sphere by paraboloids to prove orthogonality of profiles in the decomposition of the extremizing sequences (Lemma \ref{le-bilinear-sphere}, and Theorem \ref{prop-full-decomp}.) The reason we find Tao's bilinear restriction estimate for paraboloids fantastic because we use it to generalize the first author's previous work with Christ in establishing orthogonality of profiles in \cite[Lemma 7.5]{Christ-Shao:extremal-for-sphere-restriction-I-existence}, where we use the structure of convolution of the sphere surface measures. In the current work, Lemma \ref{le-bilinear-sphere}, we invoke the deep bilinear restriction theorem of Tao \cite{Tao:2003:paraboloid-restri}. The third difference is that we use the concentration-compactness idea, i.e., profile decompositions, which was developed in solving the mass-critical and energy-critical dispersive partial differential equations such as the Schr\"odinger equations, and wave equations, etc. See the reference in \cite{Tao:book}. 

In \cite{Christ-Shao:extremal-for-sphere-restriction-I-existence}, Christ and the first author have proved that constant functions are local extremizers to the Tomas-Stein inequality for $S^2$. The key ingredients are the use of spherical harmonics \cite{Stein-Weiss:1971:fourier-analysis, xu:2000:Funk_Hecke}. This idea was later used by Foschi \cite{Fo15} to establish that constants are global maximizers for this inequality.

The paper is divided into three parts: the first part focuses on establishing the refined Tomas-Stein estimates in the spirit of Bourgain in Section \ref{Archemedian-lever}, Lemma \ref{le-refinement-of-Tomas-Stein}. In Sections \ref{earthworm1}, \ref{earthworm2}, \ref{earthworm3}, \ref{earthworm4}, we establish the theorem of profile decompositions for a sequence of $L^2(\Gamma, \sigma)$-extremising sequence, Propositions \ref{prop-profiles} and \ref{le-fish-cute}. In Section \ref{Copernicus-meat}, we establish the conditional existence of extremizers for the Tomas-Stein inequality for $\Gamma\subset S^d$ in dimensions $d\ge 2$, Theorem \ref{thm-existence}. 

We will use the notations $X\lesssim Y$, $Y\gtrsim X$, or $X=O(Y)$ to denote the estimate $|X|\le
C Y$ for some constant $0<C<\infty$, which may depend on $d$ and the surface $\Gamma$ on the sphere, but not on
the functions. If $X\lesssim Y$ and $Y\lesssim X$ we will write $X\sim Y$. If the constant $C$
depends on a special parameter other than the above, we shall denote it explicitly by subscripts.
For example, $C_{\eps}$ should be understood as a positive constant not only depending on $d$
and the surface $\Gamma$, but also on $\eps$. The supports of functions on the sphere $S^d$ or $\Gamma\subset S^d$ can be understood from the context.

\section{The refined Tomas-Stein estimate}\label{Archemedian-lever}
\begin{definition}\label{def-cap}
The \emph{cap} $\mathcal{C}=\mathcal{C}(z,r)$ with center $z\in S^d$ and radius $r\in (0,1]$ is the set of all points $y\in S^d$ which lie in the same hemisphere as $z$ and are centered at $z$, and which satisfy $\pi_{H_z}(y)<r$, where the subspace $H_z\subset \R^{d+1}$ is the orthogonal complement of $z$ and $\pi_{H_z}$ denotes the orthogonal projection onto $H_z$.
\end{definition}

The following lemma on the refinement of the Tomas-Stein inequality is taken from \cite[Theroem 1.3]{Begout-Vargas:2007:profile-schrod-higher-d}.
\begin{lemma}\label{le-refinement-of-Tomas-Stein} 
For $f\in L^2(\Gamma, \sigma)$. There exists $\alpha\in (0,1)$ such that
\begin{equation}\label{eq-r1}
\|\widehat{f\sigma}\|_{2+4/d} \le C \left(\sup_\mathcal{C}\frac {1}{|\mathcal{C}|^{1/2}} \int_\mathcal{C} |f|d\sigma \right)^{\alpha} \|f\|^{1-\alpha}_{L^2(\Gamma, \sigma)},
\end{equation} where $\mathcal{C}$ denotes a cap on $S^d$.
\end{lemma}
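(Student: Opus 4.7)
The plan is to follow Bourgain's bilinear-restriction strategy, adapted to the sphere $S^d$ as in B\'egout--Vargas \cite{Begout-Vargas:2007:profile-schrod-higher-d}. The key input is Tao's sharp bilinear restriction theorem (Proposition \ref{le-bilinear-sphere}): for two caps $\mathcal{C},\mathcal{C}'\subset\Gamma$ of dyadic radius $r$ whose centers are at angular distance $\sim r$, rescaling each cap to unit size, applying the bilinear estimate, and unscaling gives
\[
\bigl\|\widehat{f_{\mathcal{C}}\sigma}\,\widehat{f_{\mathcal{C}'}\sigma}\bigr\|_{L^{q}(\R^{d+1})} \lesssim r^{\gamma}\,\|f_{\mathcal{C}}\|_{L^2(\sigma)}\|f_{\mathcal{C}'}\|_{L^2(\sigma)}
\]
for some $q$ \emph{strictly less than} the Tomas--Stein bilinear index $(d+2)/d$ and some $\gamma>0$, where $f_{\mathcal{C}}:=f\chi_{\mathcal{C}}$. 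The strict inequality $q<(d+2)/d$, which the sharp bilinear theorem supplies \emph{beyond} the linear Tomas--Stein range, is the source of the refinement.

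Performing a Whitney decomposition of $\Gamma\times\Gamma$ off the diagonal into pairs $(\mathcal{C},\mathcal{C}')$ at dyadic scales $r=2^{-j}$ with $\operatorname{diam}\mathcal{C}\sim\operatorname{diam}\mathcal{C}'\sim\operatorname{dist}(\mathcal{C},\mathcal{C}')\sim r$, one may write
\[
|\widehat{f\sigma}(\xi)|^{2} = \sum_{j\ge 0}\sum_{(\mathcal{C},\mathcal{C}')\in W_j} \widehat{f_{\mathcal{C}}\sigma}(\xi)\,\overline{\widehat{f_{\mathcal{C}'}\sigma}(\xi)}.
\]
Raising to the power $(d+2)/d$, taking $L^1$ in $\xi$, applying the bilinear estimate on each Whitney pair, and summing using the almost-disjointness of $W_j$ together with the decay $r^{\gamma}=2^{-j\gamma}$ yields, after a dyadic pigeonhole in $j$, a bound of the form
\[
\|\widehat{f\sigma}\|_{L^{2+4/d}(\R^{d+1})} \lesssim \Bigl(\sup_{\mathcal{C}}\frac{\|f_{\mathcal{C}}\|_{L^2(\sigma)}}{|\mathcal{C}|^{s}}\Bigr)^{\beta} \|f\|_{L^2(\sigma)}^{1-\beta}
\]
for some $\beta\in(0,1)$ and some $s\ge 0$ determined by the scaling gain.

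The last step is to trade the weighted cap $L^2$ norm above against the $L^1$-average $|\mathcal{C}|^{-1/2}\int_\mathcal{C}|f|d\sigma$ appearing in \eqref{eq-r1}. Iterating the bilinear--Whitney argument at finer scales inside each cap (or equivalently using H\"older to split $\|f_\mathcal{C}\|_{L^2}^2\le\|f_\mathcal{C}\|_{L^1}\|f_\mathcal{C}\|_{L^\infty}$ and then bounding the $L^\infty$-side by another application of the refined estimate against $\|f\|_{L^2}$) produces a parallel bound in which $\|f_{\mathcal{C}}\|_{L^2}$ is replaced by $|\mathcal{C}|^{-1/2}\|f_\mathcal{C}\|_{L^1}$; interpolating these two estimates and readjusting the exponents yields \eqref{eq-r1} with an explicit $\alpha\in(0,1)$. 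The main obstacle is verifying that Tao's bilinear theorem genuinely furnishes a $q<(d+2)/d$ with a positive, quantitative scaling gain $\gamma$; this is exactly the ``beyond Tomas--Stein'' feature emphasized in the introduction, and without it no refinement is possible. Transplanting the paraboloid argument of \cite{Begout-Vargas:2007:profile-schrod-higher-d} to $\Gamma\subset S^d$ is routine because $\Gamma$ has uniformly nondegenerate Gaussian curvature and bounded diameter.
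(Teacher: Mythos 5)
Your outline mirrors the overall strategy of the paper (Whitney decomposition plus bilinear restriction, in the style of B\'egout--Vargas), but there is a genuine gap in the engine that is supposed to drive the refinement. You claim that for a Whitney pair of caps of radius $r$ one has, \emph{with $L^2$ inputs},
\[
\bigl\|\widehat{f_{\mathcal{C}}\sigma}\,\widehat{f_{\mathcal{C}'}\sigma}\bigr\|_{L^{q}(\R^{d+1})} \lesssim r^{\gamma}\,\|f_{\mathcal{C}}\|_{L^2(\sigma)}\|f_{\mathcal{C}'}\|_{L^2(\sigma)}, \qquad \gamma>0,
\]
for some $q<(d+2)/d$. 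This is false: a parabolic rescaling of a cap of radius $r$ to unit scale shows the bilinear $L^2\times L^2\to L^s$ estimate carries the scaling factor $r^{d-(d+2)/s}$, which vanishes precisely at $s=(d+2)/d$ (the Tomas--Stein bilinear index, where the estimate is scale-invariant) and becomes a \emph{loss}, not a gain, for $s<(d+2)/d$. The ``beyond Tomas--Stein'' gain in Tao's theorem is not a power of $r$ with $L^2$ inputs; it is the ability to take $L^p\times L^p$ inputs with $p<2$ at (and near) the endpoint bilinear exponent, and that is exactly what the paper exploits. Theorem \ref{thm-3-pro} gives the rescaled bilinear estimate with $L^p$ inputs and a factor $2^{jd(2-p)/p}$, and this factor cancels against the $|\mathcal{C}_k^j|^{q/2-q/p}$ normalization built into Bourgain's $X_{p,q}$ norm, producing the intermediate estimate (Theorem \ref{thm-1}) $\|\widehat{f\sigma}\|_{L^{2+4/d}}\lesssim\|f\|_{X_{p,q}}$ at exactly the Tomas--Stein Lebesgue exponent $2+4/d$. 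Your proposed route, using $L^2$ inputs at some $q<(d+2)/d$, would at best yield control of $\|\widehat{f\sigma}\|_{L^{2q}}$ with $2q<2+4/d$, which is the wrong exponent, and the pigeonholing in $j$ would have nothing to exploit. Also note that Proposition \ref{le-bilinear-sphere}, which you cite, concerns two caps of \emph{the same} radius at large mutual separation $N\gg 1$ (used for orthogonality of profiles), not the near-diagonal Whitney pairs here; the relevant inputs are Theorems \ref{thm-1-pro}--\ref{thm-3-pro}.

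The second half of your sketch --- ``iterating the bilinear--Whitney argument at finer scales'' or ``using H\"older $\|f_{\mathcal{C}}\|_{L^2}^2\le\|f_{\mathcal{C}}\|_{L^1}\|f_{\mathcal{C}}\|_{L^\infty}$ and interpolating'' to pass from $L^2$ cap norms to $L^1$ cap averages --- is also too vague to carry the argument. In the paper the passage from the $X_{p,q}$ bound to the cap estimate \eqref{eq-r1} is its own lemma (Lemma \ref{le-intermediate}): one extracts a supremum over caps to a fractional power, then bounds the remaining sum by splitting each cap integral along the level set $\{|f|\gtrless|\mathcal{C}_k^j|^{-1/2}\}$ and summing two geometric series in $j$; the final $L^1$-average form of \eqref{eq-r1} then follows by a single H\"older on the cap. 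This step is elementary but structured, and your proposal should make it explicit rather than appeal to ``interpolating these two estimates.''

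To summarize: you have the right high-level picture (Whitney plus Tao bilinear), but the mechanism of the gain is misidentified ($L^p$ inputs with $p<2$, not a power of $r$ with $L^2$ inputs), the Lebesgue exponent produced by your scheme is wrong, and the reduction to the cap $L^1$-average needs the concrete level-set argument of Lemma \ref{le-intermediate}.
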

To prove this lemma, we will use the Tao's blinear restriction estimate for paraboloids \cite{Tao:2003:paraboloid-restri}.  Recently Frank, Lieb and Sabin directly prove this estimate in \cite[Proposition 5.1]{Frank-Lieb-Sabin:2007:maxi-sphere-2d} using similar ideas.  We will provide a proof for completeness and follow the outline in Begout-Vargas \cite{Begout-Vargas:2007:profile-schrod-higher-d}. Namely we first prove a Bourgain's $X_p$ estimate. see e.g. \cite{Bourgain:1998:refined-Strichartz-NLS, Merle-Vega:1998:profile-schrod}. The $X_p$ estimate, Theorem \ref{thm-1}, for arbitrary dimensions is the first time in literature. We also give the second proof following the ideas in Killip and Visan \cite[Proposition 4.24]{Killip-Visan:2008:clay-lecture-notes}. We then deduce Lemma \ref{le-refinement-of-Tomas-Stein} from Theorem \ref{thm-1}.

For each integer $k\ge 0$ choose a maximal subset $ \{ z_k^j \} \subset S^d$ satisfying $|z_k^j -z_k^i| > 2^{-k} $  for all $i\neq j$. Then for any $x\in S^d$ there exists $z_k^i$ such that $|x-z_k^i| \le 2^{-k}$; otherwise $x$ could be adjoined to $z_k^i$ for this $z_k^i$, contradicting maximality. Therefore the caps $\mathcal{C}_k^j= (\mathcal{C}_k^j , 2^{-k+1})$ cover $S^d$ for each $k$, and there exists $C<\infty$ such that for any $k$, no point of $S^d$ belongs to more than $C$ of the caps $\mathcal{C}_k^j$. The constant $C$ is independent of $k$. This automatically give a decomposition for $\Gamma$.

For $p\in [1,\infty)$, the $X_p$ norm is defined by
\begin{equation}
\|f\|_{X_{p,q} }
=\left(  \sum_{k}\sum_j |\mathcal{C}_k^j|^{q/2} 
\big(
\frac{1}{|\mathcal{C}_k^j|} \int_{\mathcal{C}_k^j}|f|^p
\big)^{q/p}\right)^{1/q}.
\end{equation}

Moyua, Vargas, and Vega \cite{Moyua-Vargas-Vega:1999} have proved
\begin{mainthm}\label{moyua-vega-vargas}
There exist $0<C<\infty$ and $p\in (1,2)$
such that for any $f\in L^2(S^2)$,
\begin{equation}
\|{\widehat{f\sigma}}\|_{L^4(\mathbb{R}^{3})}
\le C\|f\|_{X_p}.
\end{equation}
\end{mainthm}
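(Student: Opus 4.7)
My approach is to combine a bilinear restriction estimate for transverse caps on $S^2$ with a Whitney-type decomposition of $S^2\times S^2$ away from the diagonal, and then exploit almost-orthogonality on the Fourier side via Plancherel. First, fix the dyadic cap families $\{\mathcal{C}_k^j\}$ described in the excerpt and a smooth partition of unity $\psi_k^j$ subordinate to them; set $f_k^j=\psi_k^j f$. Any pair of distinct points $(\omega,\omega')\in S^2\times S^2$ admits a Whitney decomposition: there is an essentially unique scale $k$ and a pair $(j,j')$ with $|z_k^j-z_k^{j'}|\sim 2^{-k}$ such that $\omega\in\mathcal{C}_k^j$ and $\omega'\in\mathcal{C}_k^{j'}$, and this separation forces transversality of the two caps' normals. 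Squaring the extension and decomposing gives
\begin{equation*}
\widehat{f\sigma}(\xi)^2=\sum_{(k,j,j')\text{ Whitney}}\widehat{f_k^j\sigma}(\xi)\,\widehat{f_k^{j'}\sigma}(\xi),
\end{equation*}
with $O(1)$ overlap among the summands.

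Next, I would invoke almost-orthogonality on the Fourier side. The Fourier transform of the product $\widehat{f_k^j\sigma}\,\widehat{f_k^{j'}\sigma}$ is the convolution $f_k^j\sigma\ast f_k^{j'}\sigma$, supported in the Minkowski sum $\mathcal{C}_k^j+\mathcal{C}_k^{j'}\subset\R^3$. A geometric check (exploiting the Whitney separation at each scale, and curvature to separate different scales) shows these supports have bounded overlap, so Plancherel yields
\begin{equation*}
\|\widehat{f\sigma}\|_{L^4(\R^3)}^2=\|\widehat{f\sigma}^{\,2}\|_{L^2(\R^3)}\lesssim\Bigl(\sum_{(k,j,j')}\|\widehat{f_k^j\sigma}\,\widehat{f_k^{j'}\sigma}\|_{L^2(\R^3)}^2\Bigr)^{1/2}.
\end{equation*}
To each summand I would apply the Tao--Vargas--Vega bilinear $L^p\times L^p\to L^2$ estimate for transverse caps of radius $r=2^{-k}$, which after rescaling to unit caps gives
\begin{equation*}
\|\widehat{f_k^j\sigma}\,\widehat{f_k^{j'}\sigma}\|_{L^2(\R^3)}\lesssim r^{\gamma(p)}\,\|f_k^j\|_{L^p(\sigma)}\,\|f_k^{j'}\|_{L^p(\sigma)}
\end{equation*}
for some $p\in(1,2)$ and the appropriate scaling exponent $\gamma(p)$. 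Substituting this bound, rewriting each factor in the form $|\mathcal{C}_k^j|^{1/2}\bigl(|\mathcal{C}_k^j|^{-1}\int|f_k^j|^p\bigr)^{1/p}$, and applying Cauchy--Schwarz across the Whitney pairs (using the combinatorial fact that each $j$ is paired with $O(1)$ distinct $j'$ at each scale) reduces the right-hand side to exactly $\|f\|_{X_p}^2$.

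The hard part will be Step~3: producing the bilinear bound with strictly sub-quadratic $L^p$-inputs and the sharp power of $r$. An $L^2$-bilinear estimate on transverse caps alone only recovers $\|f\|_{X_2}=\|f\|_{L^2}$ on the right, which is nothing but the Tomas--Stein inequality itself; the whole content of the theorem lies in the gain $p<2$, and that gain is precisely the Tao--Vargas--Vega bilinear restriction for elliptic surfaces \emph{beyond} the Tomas--Stein range. Once that ingredient is in hand, the Whitney decomposition and the Cauchy--Schwarz bookkeeping to assemble the $X_p$ norm are essentially mechanical; matching the scaling exponents so that the final sum closes against the definition of $\|f\|_{X_p}$ is the only remaining piece of careful arithmetic.
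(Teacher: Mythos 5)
The paper never proves Theorem~\ref{moyua-vega-vargas} directly --- it cites Moyua--Vargas--Vega for it --- and instead proves the general-dimensional generalization, Theorem~\ref{thm-1}. Your proposal is essentially the $d=2$ specialization of the paper's argument for Theorem~\ref{thm-1}: flatten the sphere, perform a Whitney-type decomposition of the product domain off the diagonal into transverse pairs of dyadic cubes, apply a bilinear restriction estimate beyond the Tomas--Stein range to each pair, and reassemble into the $X_p$ norm via the bounded-overlap combinatorics. The one structural simplification you gain is that for $d=2$ the relevant bilinear exponent is $q/2=2$, so Plancherel directly gives almost-orthogonality from disjointness of the Minkowski-sum supports; for $d>2$ one has $q/2=(d+2)/d<2$ and Plancherel is unavailable, which is why the paper must invoke the Bernstein-type lemma (Theorem~\ref{Bernstein}) to sum the bilinear pieces in $L^{q/2}$ with $q/2<2$. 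That said, the ``geometric check'' you wave at --- that the Minkowski sums $\mathcal{C}_k^j+\mathcal{C}_k^{j'}$ (or rather the sets $\tilde\tau_m^l+\tilde\tau_{m'}^l$ after flattening) have bounded overlap across all scales and positions --- is actually the most delicate part of the paper's proof: it requires a refinement of the Whitney decomposition into subcubes of side $2^{-j-M}$, explicit construction of the slanted rectangles $R_{m,m',l}$ tangent to a paraboloid of the correct aperture, and a verification that the dilates $(1+c)R_{m,m',l}$ remain $C_d$-overlapping. Curvature ``separating scales'' is the right intuition, but the execution is nontrivial, and your sketch does not supply it. Finally, a minor infelicity: you introduce a smooth partition of unity $\psi_k^j$ before doing the Whitney decomposition, which would make the pairing $\widehat{f\sigma}^2=\sum\widehat{f_k^j\sigma}\,\widehat{f_k^{j'}\sigma}$ only approximate; the paper works with sharp cutoffs $f_k^j=f\chi_{\tau_k^j}$ so that the dyadic Whitney decomposition is exact, and you should too.
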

We generalize this lemma to arbitrary dimensions. 

\begin{theorem}\label{thm-1}
Let $q=\frac {2(d+2)}{d}$ and $p<2$ be such that $\frac {1}{p'} >\frac {d+3}{d+1} \frac 1q$. For every function $f\in X_{p,q}$, we have 
$$\|\widehat{f\sigma}\|_{L^q(\mathbb{R}^{d+1})}\le \|f\|_{X_{p,q}}. $$
\end{theorem}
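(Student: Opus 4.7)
The plan is to adapt Moyua--Vargas--Vega's strategy (originally for $d=2$) to arbitrary dimension, using Tao's sharp bilinear restriction theorem for paraboloids as the analytic engine in place of the earlier bilinear input. Since $q/2\ge 1$, factor $\|\widehat{f\sigma}\|_{L^q}^2 = \|\widehat{f\sigma}\cdot\overline{\widehat{f\sigma}}\|_{L^{q/2}}$ and perform a Whitney decomposition of $S^d\times S^d$ off the diagonal. With $\{\phi_k^j\}$ a bounded partition of unity subordinate to the cap covers $\{\mathcal{C}_k^j\}$ introduced above, set $f_k^j := f\cdot\phi_k^j$ and expand
\begin{equation*}
\widehat{f\sigma}\cdot\overline{\widehat{f\sigma}} \;=\; \sum_{k\ge 0}\sum_{(j,j')\in \mathcal{W}_k} \widehat{f_k^j\sigma}\cdot\overline{\widehat{f_k^{j'}\sigma}},
\end{equation*}
where $\mathcal{W}_k$ collects Whitney pairs: caps at scale $2^{-k}$ with centers at mutual distance comparable to $2^{-k}$.

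For each Whitney pair, a parabolic rescaling identifies $(\mathcal{C}_k^j,\mathcal{C}_k^{j'})$ with two unit-sized transverse subsets of a paraboloid, and Tao's bilinear restriction theorem supplies $L^2\times L^2\to L^s$ bounds for every $s>(d+3)/(d+1)$. Interpolating this against the trivial Hausdorff--Young bound $L^1\times L^1\to L^\infty$ at the parameter $1/p' = s/q$ and undoing the rescaling yields, for every pair $(j,j')\in\mathcal{W}_k$,
\begin{equation*}
\bigl\|\widehat{f_k^j\sigma}\cdot\overline{\widehat{f_k^{j'}\sigma}}\bigr\|_{L^{q/2}(\R^{d+1})} \;\lesssim\; 2^{-k\gamma}\,|\mathcal{C}_k^j|^{2/p'-2/q}\,\|f_k^j\|_{L^p(\sigma)}\,\|f_k^{j'}\|_{L^p(\sigma)},
\end{equation*}
for some $\gamma=\gamma(s,d)>0$. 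The hypothesis $1/p' > (d+3)/((d+1)q)$ is precisely what permits choosing $s$ strictly inside Tao's bilinear range while keeping $p<2$, producing a strictly positive decay exponent $\gamma$.

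The final summation is carried out via almost-orthogonality. The spatial Fourier transform of $\widehat{f_k^j\sigma}\cdot\overline{\widehat{f_k^{j'}\sigma}}$ is supported in the Minkowski difference $\mathcal{C}_k^j-\mathcal{C}_k^{j'}$, and the Whitney geometry gives these differences finite overlap as $(j,j')$ ranges over $\mathcal{W}_k$ for fixed $k$. An almost-orthogonality argument in the style of Moyua--Vargas--Vega and B\'egout--Vargas upgrades the pairwise bilinear bounds to an $\ell^{q/2}$-type sum over Whitney pairs, which via Cauchy--Schwarz collapses to a sum over single caps of the form appearing in $\|f\|_{X_{p,q}}$; the geometric factor $2^{-k\gamma}$ then makes the $k$-sum converge. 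The main obstacle is this orthogonality step when $q/2<2$ (i.e.\ when $d\ge 3$), where Plancherel does not directly apply: one must interpolate the $L^2$-orthogonality against a trivial triangle bound to exploit the Whitney geometry at the correct Lebesgue exponent, and this is exactly what pins down the open condition on $p$ in the hypothesis.
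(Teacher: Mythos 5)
Your architecture matches the paper's almost exactly: factor $\|\widehat{f\sigma}\|_{L^q}^2$ as an $L^{q/2}$ norm of a bilinear product, perform a Whitney decomposition (the paper does it on $[-1/2,1/2]^d\times[-1/2,1/2]^d$ off the diagonal, yours on $S^d\times S^d$; cosmetically the paper uses $\widehat{f\sigma}\cdot\widehat{f\sigma}$ so works with Minkowski \emph{sums} of caps rather than differences), invoke Tao's bilinear restriction theorem after parabolic rescaling, interpolate against the trivial $L^1\times L^1\to L^\infty$ bound, and close the sum with a Bernstein-type almost-orthogonality lemma (Theorem~\ref{Bernstein}, from Tao--Vargas--Vega), which is the correct tool precisely because $q/2=(d+2)/d<2$ so that $p^*=q/2$ in that lemma. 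You correctly identified this orthogonality step as the crux.

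However, two points in your accounting are off and could mislead a reader (or you) in filling in the details. First, the claimed decay factor ``$2^{-k\gamma}$ with $\gamma>0$'' is not what the rescaled bilinear estimate produces. The paper's Theorem~\ref{thm-3-pro} (which is Theorem~\ref{thm-2-pro} after parabolic rescaling to scale $2^{-k}$) gives exactly $\|\widehat{f_k^j\sigma}\widehat{f_k^{j'}\sigma}\|_{L^{q/2}}\lesssim 2^{kd(2-p)/p}\|f_k^j\|_{L^p}\|f_k^{j'}\|_{L^p}$, and $2^{kd(2-p)/p}=|\mathcal{C}_k^j|^{1-2/p}$. This is precisely the single-cap weight in $\|f\|_{X_{p,q}}$: when raised to the power $q/2$ and summed, it reproduces $|\mathcal{C}_k^j|^{q/2}\big(\frac{1}{|\mathcal{C}_k^j|}\int|f|^p\big)^{q/p}$ exactly, with nothing to spare. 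Your exponent $|\mathcal{C}_k^j|^{2/p'-2/q}$ differs from $|\mathcal{C}_k^j|^{1-2/p}$ by $|\mathcal{C}_k^j|^{1-2/q}$, and reconciling the two forces $\gamma=-d(1-2/q)<0$, not $\gamma>0$. Consequently your statement that ``the geometric factor $2^{-k\gamma}$ then makes the $k$-sum converge'' is the wrong picture: the $k$-sum does not need to converge geometrically --- it is literally the scale-sum built into the definition of $X_{p,q}$, and the Bernstein lemma is applied \emph{simultaneously across all scales} (the multi-index $\mathcal{D}=(m,m',l)$ in the paper runs over scales $l$) to produce the $\ell^{q/2}$ sum that \emph{is} $\|f\|_{X_{p,q}}^q$. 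The open hypothesis $1/p'>\frac{d+3}{d+1}\frac{1}{q}$ is there because Tao's bilinear theorem is an open-endpoint result ($r>\frac{d+3}{d+1}$); it does not manufacture a positive decay exponent. Second, you should be aware that applying the Bernstein lemma requires the rectangles covering the Fourier supports $\tilde\tau_k^j+\tilde\tau_{k}^{j'}$ to have bounded overlap \emph{across all scales simultaneously}, and this is not automatic for the raw Whitney cubes; the paper re-subdivides each Whitney cube by a fixed dimensional factor $2^{-M}$ (with $M=\max\{2[\ln(d+1)],2\}$) and constructs explicit rectangles $R_{m,m',l}$ adapted to the tangent planes of the rescaled paraboloids to make the finite-overlap count work. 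Your proposal elides this geometric step, which is where most of the actual work in the paper's proof lives.
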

We follow the outline in \cite{Begout-Vargas:2007:profile-schrod-higher-d} and give the details of the proof for the sake of completeness. Our main tool is the following sharp bilinear restriction estimate proved by Tao \cite[Remark 3]{Tao:2003:paraboloid-restri}. The precursor of this estimate, the sharp bilinear restriction estimate for the cone, is proved by Wolff \cite{Wolff:2001:restric-cone}. 

We write the upper quarter sphere as $\Gamma \subset S^d, \, \Gamma= \{(x, \phi(x)): \, |x|\le \frac 12, \phi(x) = \sqrt{1-|x|^2}\}$. For $f\in L^2(\Gamma, \sigma)$, 
$$\widehat{f\sigma} (t,x) = \int e^{ix\cdot \xi+ it\phi(\xi)} f(\xi, \sqrt{1-|\xi|^2}) \frac {d\xi}{\sqrt{1-|\xi|^2}} =  \int e^{ix\cdot \xi+ it\phi(\xi)}  \frac {f(\xi, \sqrt{1-|\xi|^2})}{\sqrt{1-|\xi|^2}} d\xi. $$
Since we are on the upper quarter sphere, the Jacobian factor $ \sqrt{1-|\xi|^2} \sim 1$; we regard $  \frac {f(\xi, \sqrt{1-|\xi|^2})}{\sqrt{1-|\xi|^2}}$ as a $d$-dimensional function on $\mathbb{R}^d$. We still denote it by $f$ in this section. 

\begin{mainthm}\cite{Tao:2003:paraboloid-restri} \label{thm-1-pro}
Let $Q, Q'$ be cubes of sidelength $1$ in $\mathbb{R}^d$ such that 
$$ \min\{d(x,y): \, x\in Q, y\in Q'\}\sim 1$$
and $f,g$ functions supported in $Q$ and $Q'$ respectively. Then for all $r>\frac {d+3}{d+1}$ and $p\ge 2$, we have 
$$ \|\widehat{f\sigma} \widehat{g\sigma}\|_{L^r(\mathbb{R}^{d+1})}  \le C \|f\|_{L^p(\mathbb{R}^d)} \|g\|_{L^p(\mathbb{R}^d)},$$
with a constant $C$ independent of $f,g, Q, Q'$. 
\end{mainthm}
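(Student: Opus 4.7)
The plan is to deduce the estimate via Tao's induction-on-scales strategy combined with wave packet decomposition, reducing first to the core case $p=2$. Since the trivial pointwise bound gives $\|\widehat{f\sigma}\widehat{g\sigma}\|_{L^\infty}\le \|f\|_{L^1}\|g\|_{L^1}$, interpolating an estimate of the form $\|\widehat{f\sigma}\widehat{g\sigma}\|_{L^{r_0}}\lesssim \|f\|_2\|g\|_2$ for some $r_0>\frac{d+3}{d+1}$ against this endpoint yields the whole range $p\ge 2$ and the stated open region of $r$.

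For the $p=2$ case, the localization $Q,Q'\subset\mathbb{R}^d$ with $\mathrm{dist}(Q,Q')\sim 1$ lets me approximate the piece of $S^d$ over each cube by the paraboloid $\xi_{d+1}=\frac{1}{2}|\xi|^2$, and after parabolic rescaling I may treat the problem directly on the paraboloid. I would then replace the global bound by a local one on a ball $B_R$ of radius $R$ with an $R^\epsilon$ loss,
\begin{equation*}
\|\widehat{f\sigma}\,\widehat{g\sigma}\|_{L^r(B_R)} \le C_\epsilon R^\epsilon \|f\|_2\|g\|_2,
\end{equation*}
since Tao's standard $\epsilon$-removal lemma upgrades this to the global estimate for any exponent strictly larger than the endpoint.

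The heart of the argument is a wave packet decomposition at scale $R$: write $f=\sum_T f_T$ where $T$ indexes phase-space tiles of spatial cross-section $\sim R^{1/2}$ and frequency cross-section $\sim R^{-1/2}$, chosen so that $\widehat{f\sigma}|_{B_R}$ is essentially a sum of bumps supported on physical tubes of dimensions $R^{1/2}\times\cdots\times R^{1/2}\times R$ aligned with the paraboloid normals over $Q$; do the same for $g$ over $Q'$. The separation of $Q,Q'$ forces transversality, with normals making an angle $\gtrsim 1$, so any $f$-tube and any $g$-tube meet in a set of diameter $\lesssim R^{1/2}$ and the incidence combinatorics of the two families are tightly constrained. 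After dyadic pigeonholing on the $L^2$-masses of the surviving wave packets and on the level sets of $|\widehat{f\sigma}\,\widehat{g\sigma}|$, I would apply the inductive hypothesis on sub-balls of radius $R/2$ to each concentrated piece and sum the contributions using this incidence control to close the induction.

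The main obstacle is precisely the transverse incidence/Kakeya-type combinatorial estimate, in the spirit of Wolff's cone argument, that extracts enough gain to defeat the $R^\epsilon$ loss at the sharp exponent $r>\frac{d+3}{d+1}$; any slack here would push the closing inequality past the endpoint and fail to recover Tao's range. Once this combinatorial step is in hand, the parabolic rescaling, induction closure, $\epsilon$-removal, and final interpolation with the $L^\infty$ bound assemble into the stated estimate in a largely mechanical fashion.
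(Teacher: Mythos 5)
This statement is not proved in the paper at all: it is imported verbatim from Tao's paper (the remarks in \cite{Tao:2003:paraboloid-restri} extending the sharp bilinear paraboloid estimate to elliptic surfaces such as the graph of $\sqrt{1-|\xi|^2}$), and the paper's ``proof'' is that citation. Your proposal is an outline of Tao's original strategy (local estimate on $B_R$ with $R^\epsilon$ loss, wave packets, transversality, induction on scales, $\epsilon$-removal), but it contains a genuine gap: the entire combinatorial core --- the induction-on-scales closing step with the transverse incidence/Kakeya-type gain at the sharp exponent $\frac{d+3}{d+1}$ --- is exactly the content of Tao's theorem, and you explicitly defer it (``once this combinatorial step is in hand\dots''). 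As written, the proposal therefore supplies only the standard reductions and none of the substance that makes the endpoint $r>\frac{d+3}{d+1}$ (rather than Tomas--Stein's $\frac{d+2}{d}$ range) attainable; either you must carry out that argument or, as the paper does, simply cite Tao.

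Two of your reductions also need repair. First, the passage from the sphere to the paraboloid ``by approximation and parabolic rescaling'' is not valid here: the cubes $Q,Q'$ have sidelength $1$ and separation $\sim 1$, so there is no rescaling under which the spherical phase becomes the exact parabolic one with small error; what is actually needed is that Tao's wave-packet/induction argument is stable under elliptic perturbations of the paraboloid, which is precisely the Section~9 remark of \cite{Tao:2003:paraboloid-restri} that the paper invokes (parabolic rescaling of small caps is used elsewhere in the paper, in Theorem \ref{thm-3-pro}, not here). Second, your claim that interpolating the $L^2\times L^2\to L^{r_0}$ bound with $\|\widehat{f\sigma}\widehat{g\sigma}\|_{L^\infty}\le\|f\|_{L^1}\|g\|_{L^1}$ ``yields the whole range $p\ge 2$'' goes the wrong way: that interpolation produces exponents $p\le 2$ (and larger $r$), and it is how the paper obtains the subsequent Theorem \ref{thm-2-pro}. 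For $p\ge 2$ one needs no interpolation at all: since $f,g$ are supported in unit cubes, H\"older gives $\|f\|_{L^2}\lesssim\|f\|_{L^p}$, so the stated estimate for all $r>\frac{d+3}{d+1}$ and $p\ge2$ follows immediately from the $p=2$ case --- which again is the full strength of Tao's theorem that your sketch leaves unproved.
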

By interpolation with the trivial estimate 
$$\|\widehat{f\sigma} \widehat{g\sigma} \|_{L^\infty(\mathbb{R}^{d+1})}  \le C \|f\|_{L^1}\|g\|_{L^1} \le C \|f\|_{L^p} \|g\|_{L^p}$$
for all $p\ge 1$, we can obtain
\begin{mainthm}\cite{Tao:2003:paraboloid-restri} \label{thm-2-pro}
Let $Q, Q'$ be cubes of sidelength $1$ in $\mathbb{R}^d$ such that 
$$ \min\{d(x,y): \, x\in Q, y\in Q'\}\sim 1$$
and $f,g$ functions supported in $Q$ and $Q'$ respectively. Then for all $r>\frac {d+3}{d+1}$ and $p$ such that $\frac {2}{p'}> \frac {d+3}{d+1} \frac 1r$, we have 
$$ \|\widehat{f\sigma} \widehat{g\sigma}\|_{L^r(\mathbb{R}^{d+1})}  \le C \|f\|_{L^p(\mathbb{R}^d)} \|g\|_{L^p(\mathbb{R}^d)},$$
with a constant $C$ independent of $f,g, Q, Q'$. 
\end{mainthm}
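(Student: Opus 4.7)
The plan is to deduce Theorem \ref{thm-2-pro} from Theorem \ref{thm-1-pro} by bilinear Riesz--Thorin interpolation against the trivial endpoint
\begin{equation*}
\|\widehat{f\sigma}\,\widehat{g\sigma}\|_{L^\infty(\mathbb{R}^{d+1})} \le \|\widehat{f\sigma}\|_\infty \|\widehat{g\sigma}\|_\infty \le C\|f\|_{L^1(\mathbb{R}^d)}\|g\|_{L^1(\mathbb{R}^d)},
\end{equation*}
which is immediate from \eqref{eq-3} and the fact that the unit cubes $Q, Q'$ have bounded measure. Theorem \ref{thm-1-pro} specialised to $p = 2$ provides the other endpoint: for every $r_0 > \tfrac{d+3}{d+1}$,
\begin{equation*}
\|\widehat{f\sigma}\,\widehat{g\sigma}\|_{L^{r_0}(\mathbb{R}^{d+1})} \le C\|f\|_{L^2(\mathbb{R}^d)}\|g\|_{L^2(\mathbb{R}^d)},
\end{equation*}
with constants uniform in $Q, Q'$.

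Bilinear Riesz--Thorin interpolation, realised via the trilinear form $(f, g, h) \mapsto \int \widehat{f\sigma}\,\widehat{g\sigma}\,\overline{h}$ and the three-lines lemma, then produces, for each $\theta \in (0,1)$,
\begin{equation*}
\|\widehat{f\sigma}\,\widehat{g\sigma}\|_{L^{r}(\mathbb{R}^{d+1})} \le C\|f\|_{L^p(\mathbb{R}^d)}\|g\|_{L^p(\mathbb{R}^d)},
\end{equation*}
where $\tfrac{1}{p} = 1 - \tfrac{\theta}{2}$ and $\tfrac{1}{r} = \tfrac{\theta}{r_0}$. Eliminating $\theta$ yields $\theta = 2/p'$ and $r_0 = 2r/p'$, so the admissibility condition $r_0 > \tfrac{d+3}{d+1}$ becomes exactly $\tfrac{2}{p'} > \tfrac{d+3}{d+1}\cdot \tfrac{1}{r}$, which is the hypothesis of Theorem \ref{thm-2-pro}. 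The restriction $\theta \in (0, 1)$ forces $p \in (1, 2)$; the complementary case $p \ge 2$ is already contained in Theorem \ref{thm-1-pro}, since $\tfrac{2}{p'} \ge 1 > \tfrac{d+3}{(d+1)r}$ whenever $r > \tfrac{d+3}{d+1}$.

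There is no substantive obstacle in this argument: bilinear Riesz--Thorin preserves uniformity of constants in the cubes $Q, Q'$, as both endpoint bounds are uniform; and as $r_0$ ranges over $(\tfrac{d+3}{d+1}, \infty)$, the induced pair $(p, r)$ sweeps out the entire open region claimed in Theorem \ref{thm-2-pro}. The only minor bookkeeping is the consistency check $r > \tfrac{d+3}{d+1}$, which follows from $\theta \le 1$ together with $r = r_0/\theta \ge r_0 > \tfrac{d+3}{d+1}$.
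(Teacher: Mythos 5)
Your argument is correct and is exactly the approach the paper takes: the paper simply states that Theorem \ref{thm-2-pro} follows ``by interpolation with the trivial estimate'' $\|\widehat{f\sigma}\widehat{g\sigma}\|_\infty \le C\|f\|_1\|g\|_1$, and you have spelled out the bilinear Riesz--Thorin arithmetic (via the trilinear form and three-lines lemma) that justifies it. The exponent bookkeeping $\theta = 2/p'$, $r_0 = 2r/p'$ and the resulting admissibility condition $\tfrac{2}{p'} > \tfrac{d+3}{d+1}\tfrac{1}{r}$ are all correct, as is the observation that the case $p \ge 2$ is already contained in Theorem \ref{thm-1-pro}.
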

By a simple argument using Theorem \ref{thm-2-pro}, we obtain the following estimate. 
\begin{mainthm}\cite{Begout-Vargas:2007:profile-schrod-higher-d}\label{thm-3-pro}
Let $\tau, \tau'$ be cubes of sidelength $2^{-j}$ in $\mathbb{R}^d$ such that 
$$ \min\{d(x,y): \, x\in \tau, y\in \tau'\}\sim 2^{-j}$$
and $f,g$ functions supported in $\tau$ and $\tau'$ respectively. Then for all $r=\frac {d+2}{d}$ and $p$ such that $\frac {2}{p'}>\frac {d+3}{d+1}\frac 1r$, we have 
$$ \|\widehat{f\sigma} \widehat{g\sigma}\|_{L^r(\mathbb{R}^{d+1})}  \le C 2^{jd\frac {2-p}{p}} \|f\|_{L^p(\mathbb{R}^d)} \|g\|_{L^p(\mathbb{R}^d)},$$
with a constant $C$ independent of $f, g, \tau$ and $\tau'$. 
\end{mainthm}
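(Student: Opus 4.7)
The plan is to deduce Theorem~\ref{thm-3-pro} from Theorem~\ref{thm-2-pro} by the standard parabolic (Galilean) rescaling adapted to the local graph structure of $\Gamma$. Although $\tau$ and $\tau'$ have sidelength $2^{-j}\ll 1$, after an anisotropic change of variables in both frequency and spacetime they will become unit cubes supporting a uniformly non-degenerate elliptic surface, to which Tao's unit-scale theorem applies with a constant independent of $j$.

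First, I would let $\xi_0,\xi_0'$ denote the centers of $\tau,\tau'$ and set $\tilde f(\eta)=f(\xi_0+2^{-j}\eta)$, $\tilde g(\eta)=g(\xi_0'+2^{-j}\eta)$ for $\eta$ in a unit cube, so $\|f\|_{L^p}=2^{-jd/p}\|\tilde f\|_{L^p}$ and likewise for $g$. Using the Taylor expansion of $\phi(\xi)=\sqrt{1-|\xi|^2}$ about $\xi_0$, a Galilean shift $x\mapsto x+t\nabla\phi(\xi_0)$, and the anisotropic scaling $(t,x)\mapsto(\tilde t,\tilde x)=(2^{-2j}t,\,2^{-j}(x+t\nabla\phi(\xi_0)))$, I can write
\begin{equation*}
\widehat{f\sigma}(t,x) \;=\; 2^{-jd}\,e^{i\Phi_j(t,x)}\,F(\tilde t,\tilde x),
\end{equation*}
where $\Phi_j$ is a real phase and $F$ is the Fourier extension of $\tilde f$ against a graph over the unit cube whose quadratic part is governed by the non-degenerate form $D^{2}\phi(\xi_0)$ and whose higher-order corrections are of size $O(2^{-j})$. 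An analogous identity holds for $g$, with the same rescaling producing a function $G$.

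Second, since the original cubes $\tau,\tau'$ are at distance $\sim 2^{-j}$, their rescaled images sit at distance $\sim 1$, so Theorem~\ref{thm-2-pro} yields $\|FG\|_{L^r(\R^{d+1})}\le C\|\tilde f\|_{L^p}\|\tilde g\|_{L^p}$. The unimodular phases $e^{i\Phi_j}$ cancel in the product, the Jacobian of $(t,x)\mapsto(\tilde t,\tilde x)$ equals $2^{-j(d+2)}$, and each amplitude contributes $2^{-jd}$; collecting factors gives
\begin{equation*}
\|\widehat{f\sigma}\widehat{g\sigma}\|_{L^r(\R^{d+1})} \;=\; 2^{-2jd + j(d+2)/r}\,\|F\,G\|_{L^r(\R^{d+1})}.
\end{equation*}
Plugging in $r=(d+2)/d$ collapses the prefactor to $2^{-jd}$, and combining with the relations $\|\tilde f\|_{L^p}=2^{jd/p}\|f\|_{L^p}$ and $\|\tilde g\|_{L^p}=2^{jd/p}\|g\|_{L^p}$ produces the claimed weight $2^{-jd+2jd/p}=2^{jd(2-p)/p}$.

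The hard part is ensuring that the constant $C$ in the unit-scale bilinear estimate can be taken independent of $j$ and of $\xi_0,\xi_0'\in\Gamma$. Since $|\xi_0|\le 1/2$ on $\Gamma$, the Hessian $D^{2}\phi(\xi_0)$ is uniformly non-degenerate and bounded, and the rescaled graphing function is a $C^{\infty}_{\mathrm{loc}}$-small perturbation of a fixed non-degenerate paraboloid as $j\to\infty$. Tao's bilinear theorem is stable under such perturbations (indeed he states it for general elliptic surfaces with bounded curvature), so a single constant serves for all cubes and all scales. Once this uniformity is in hand, the rescaling computation outlined above completes the proof.
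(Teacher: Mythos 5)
Your proof is correct and follows essentially the same parabolic rescaling route the paper uses to deduce Theorem~\ref{thm-3-pro} from Theorem~\ref{thm-2-pro}; you are in fact more explicit than the paper, which performs only the frequency rescaling $\xi=2^{-j}\eta$ and leaves the accompanying Galilean shift and anisotropic spacetime scaling (needed to renormalize the $O(2^{-2j})$ curvature of $\phi(2^{-j}\cdot)$ to unit scale) implicit. Your bookkeeping of the Jacobian $2^{-j(d+2)}$, the amplitude factors $2^{-jd}$, and the $L^p$ rescalings correctly reproduces the weight $2^{jd(2-p)/p}$ at $r=(d+2)/d$, and the uniformity of Tao's constant over the uniformly non-degenerate rescaled graphs with $|\xi_0|\le 1/2$ is exactly the point the paper appeals to as well.
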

Indeed, we choose one large cube $\tau_0\subset B(0,1)$ that has comparable sizes to $\tau, \tau'$ and contains $\tau, \tau'$. If necessary, we rotate $\tau_0$ such that the sides of $\tau_0$ are parallel to the coordinates. Neither $\widehat{f\sigma}$ nor $\|f\|_{L^p(\Gamma, \sigma)}$ is not affected by this rotation; the same applies to $\widehat{g\sigma}$ and $\|g\|_{L^p(\Gamma, \sigma)}$. This idea has also been implemented in \cite[Theorem A. 1]{Frank-Lieb-Sabin:2007:maxi-sphere-2d}.  Noting that the side length of $\tau_0$ is $2^{-j}$, by scaling,
$$ \widehat{f\sigma} (t,x) = 2^{-jd} \int_{Q} e^{i 2^{-j}x\cdot \eta +it \phi(2^{-j}\eta)} f(2^{-j}\eta) d\eta,$$
and 
$$ \widehat{g\sigma} (t,x) =2^{-jd} \int_{Q'}e^{i 2^{-j}x\cdot \eta +it \phi(2^{-j}\eta)} f(2^{-j}\eta) d\eta  $$
where $Q, Q'$ is the cube obtained from $\tau, \tau'$ under this scaling, respectively. Note that $Q, Q'$ are of distance $1$. Then we use Tao's sharp bilinear restriction estimate for elliptic surfaces, Theorem \ref{thm-2-pro}, from which Theorem \ref{thm-3-pro} follows. 

We will need to use the orthogonality of functions with disjoint supports. More precisely, the following Bernstein type lemma, a proof of which can be found, for instance, in Tao, Vargas and Vega \cite[Lemma 6.1]{Tao-Vargas-Vega:1998:bilinear-restri-kakeya}. 

\begin{mainthm}\label{Bernstein}
Let $\{R_k\}_{k\in \mathbb{N}}$ be a collection of rectangles in $\mathbb{R}^d$ and $c>0$, such that the dilates $(1+c) R_k$ are almost disjoint $i.e., \sum_k \chi_{(1+c)R_k} \le C$, and suppose that $(f_k)_{k\in \mathbb{N}}$ is a collection of functions whose Fourier transforms are supported in $R_k$. Then for all $1\le p\le \infty$, we have 
$$ \|\sum_{k\in \mathbb{N}} f_k \|_{L^p(\mathbb{R}^d)} \le C(d, c) \left(\|f_k\|^{p^*}_{L^p(\mathbb{R}^d)} \right)^{\frac{1}{p^*}},$$
where $p^*= \min(p, p')$.  
\end{mainthm}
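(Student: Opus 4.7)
The plan is to prove the inequality at the three endpoints $p\in\{1,2,\infty\}$ directly and then interpolate. The cases $p=1$ and $p=\infty$ reduce to the triangle inequality and give $p^{*}=1$ for free; the Fourier-support hypothesis is used only at $p=2$.

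For the central case $p=2$, since $\widehat{f_k}$ is supported in $R_k\subset(1+c)R_k$ we have $|\widehat{f_k}|=|\widehat{f_k}|\chi_{(1+c)R_k}$. Applying Cauchy--Schwarz in $k$ together with the almost-disjointness hypothesis $\sum_k\chi_{(1+c)R_k}\le C$ gives, pointwise in $\xi$,
$$\Big|\sum_k\widehat{f_k}(\xi)\Big|^{2}\le\Big(\sum_k\chi_{(1+c)R_k}(\xi)\Big)\sum_k|\widehat{f_k}(\xi)|^{2}\le C\sum_k|\widehat{f_k}(\xi)|^{2}.$$
Integrating in $\xi$ and invoking Plancherel yields $\|\sum_k f_k\|_{2}^{2}\le C\sum_k\|f_k\|_{2}^{2}$, the claim with $p^{*}=2$.

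The three endpoint bounds can be read as boundedness of the summation operator $T\colon(f_k)_k\mapsto\sum_k f_k$ between mixed-norm spaces, namely $T\colon L^{1}(\ell^{1})\to L^{1}$, $T\colon L^{2}(\ell^{2})\to L^{2}$, and $T\colon L^{\infty}(\ell^{1})\to L^{\infty}$, with uniform constants. Two applications of Riesz--Thorin interpolation for mixed-norm $L^{p}(\ell^{s})$ spaces---between the first two endpoints for $1\le p\le 2$ and between the last two for $2\le p\le\infty$---produce $T\colon L^{p}(\ell^{p^{*}})\to L^{p}$ with $p^{*}=\min(p,p')$. For $1\le p\le 2$ one has $p^{*}=p$ and $L^{p}(\ell^{p})=\ell^{p}(L^{p})$, so this is already the stated inequality; for $p\ge 2$, Minkowski's integral inequality $\|f\|_{L^{p}(\ell^{p^{*}})}\le\|f\|_{\ell^{p^{*}}(L^{p})}$ (valid because $p^{*}\le p$) upgrades the Riesz--Thorin bound into the stated form. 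Infinite index sets are handled by truncation to finite $F\subset\mathbb{Z}$ followed by monotone convergence.

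I expect the main obstacle to be essentially bookkeeping: one has to keep careful track of the distinction between $L^{p}(\ell^{s})$ and $\ell^{s}(L^{p})$, and to arrange the two Riesz--Thorin interpolations so that the inner index lands at $p^{*}=\min(p,p')$ rather than $p$. Beyond that, no additional analytic input is needed past the $L^{2}$ Plancherel estimate of the second step.
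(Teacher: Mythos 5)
The paper offers no proof of this lemma --- it defers to Tao--Vargas--Vega, Lemma 6.1 --- so I will assess your argument on its own terms. Your three endpoint estimates and the exponent arithmetic are correct, and the overall shape (endpoints $p=1,2,\infty$ plus two interpolations) matches the standard proof. However, the interpolation step as written has a genuine gap: the $L^2(\ell^2)\to L^2$ bound holds only on the linear subspace $V=\{(f_k):\,\supp\widehat{f_k}\subset R_k\}$, since the unconstrained summation map is clearly unbounded there (take $f_1=\cdots=f_N=f$). Riesz--Thorin cannot be applied to an operator that is bounded only on a subspace at one endpoint. Concretely, the analytic family one constructs in the three-lines argument multiplies each $f_k$ by a non-polynomial power of $|f_k|$, so the modified functions no longer have Fourier support in $R_k$, and at $\re z=1$ the Plancherel estimate is not available. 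More abstractly, $[V\cap X_0, V\cap X_1]_\theta$ is in general a \emph{proper} subspace of $V\cap [X_0,X_1]_\theta$, and there is no bounded retraction onto $V$ at the $L^1$ or $L^\infty$ endpoint because sharp Fourier projections onto rectangles are unbounded there.

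The standard repair --- and the one underlying the cited reference --- is to replace the sharp support constraint by smooth cutoffs. Fix a Schwartz function $\psi$ with $\widehat{\psi}\equiv 1$ on the unit cube and $\supp\widehat{\psi}$ contained in its $(1+c)$-dilate, and let $\psi_k$ be the affine image of $\psi$ adapted to $R_k$, so that $\widehat{\psi_k}\equiv 1$ on $R_k$, $\supp\widehat{\psi_k}\subset(1+c)R_k$, and $\|\psi_k\|_{L^1}=\|\psi\|_{L^1}$ uniformly in $k$. The operator $S(g_k):=\sum_k\psi_k*g_k$ is then bounded on the \emph{full} mixed-norm spaces: $\ell^1(L^1)\to L^1$ and $\ell^1(L^\infty)\to L^\infty$ by Young's inequality, and $\ell^2(L^2)\to L^2$ by Plancherel, Cauchy--Schwarz, and $\sum_k\chi_{(1+c)R_k}\le C$. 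Riesz--Thorin for $\ell^q(L^p)$ now applies with no subspace issue and gives $S:\ell^{p^*}(L^p)\to L^p$ directly for every $p$ (which also removes the need for your final Minkowski step, whose only purpose was to trade $L^p(\ell^{p^*})$ for $\ell^{p^*}(L^p)$). Since $\psi_k*f_k=f_k$ whenever $\supp\widehat{f_k}\subset R_k$, the lemma follows. A minor further point: your $p=\infty$ endpoint $L^\infty(\ell^1)\to L^\infty$ is not obviously available for $S$ because the $\psi_k$ vary with $k$; $\ell^1(L^\infty)\to L^\infty$ is the correct choice there.
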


Now we begin the proof of Theorem \ref{thm-1}, which we give two proofs. In the first proof, we follow the outline in Begout-Vargas \cite{Begout-Vargas:2007:profile-schrod-higher-d}. In the second proof, we follow the proof in Killip and Visan \cite[Proposition 4.24]{Killip-Visan:2008:clay-lecture-notes}.   Then we deduce Theorem \ref{le-refinement-of-Tomas-Stein} from it. 
\begin{proof}{\bf (The first proof.)}
We set $r=\frac q2=\frac {d+2}{d}$. We can assume that the support of $f$ is contained in the square $\{x\in \mathbb{R}^d:\, |x_i| \le \frac 12, 1\le i\le d\}$. For each $j\in \mathbb{Z}$, we decompose $\mathbb{R}^d$ into dyadic cubes $\tau_j^k$ of sidelength $2^{-j}$. Given a dyadic cube $\tau_j^k$ we will say that is the ``parent" of the $2^d$ dyadic cubes of sidelength $2^{-j-1}$ contained in it. We define that $\tau_k^j$ is ``orthogonal" to $\tau_{k'}^{j}$, denoted by $\tau_k^j\sim \tau_{k'}^j $,  if $\tau^j_k$ and $\tau^j_{k'}$ are not adjacent but have adjacent parents. For each $j\ge 0$, we write $f=\sum f_k^j$ where $f_k^j=f\chi_{\tau_k^j}$. Denote by $\beta$ the diagonal of $\mathbb{R}^d\times \mathbb{R}^d$, $\beta=\{ (x,x):\, x\in \mathbb{R}^d\}$. We have the following Whitney type decomposition, as indicated in Figure \ref{fig-1},  of $\mathbb{R}^d\times \mathbb{R}^d\setminus \beta$, 
$$[-1/2,1/2]^d\times [-1/2,1/2]^d \setminus \beta = \cup_j\cup_{k, k': \, \tau^j_k\sim \tau^j_{k'}} \tau_k^j\times \tau^j_{k'}. $$
 \begin{figure}[h]
        \centering
        \includegraphics[width=0.8\textwidth]{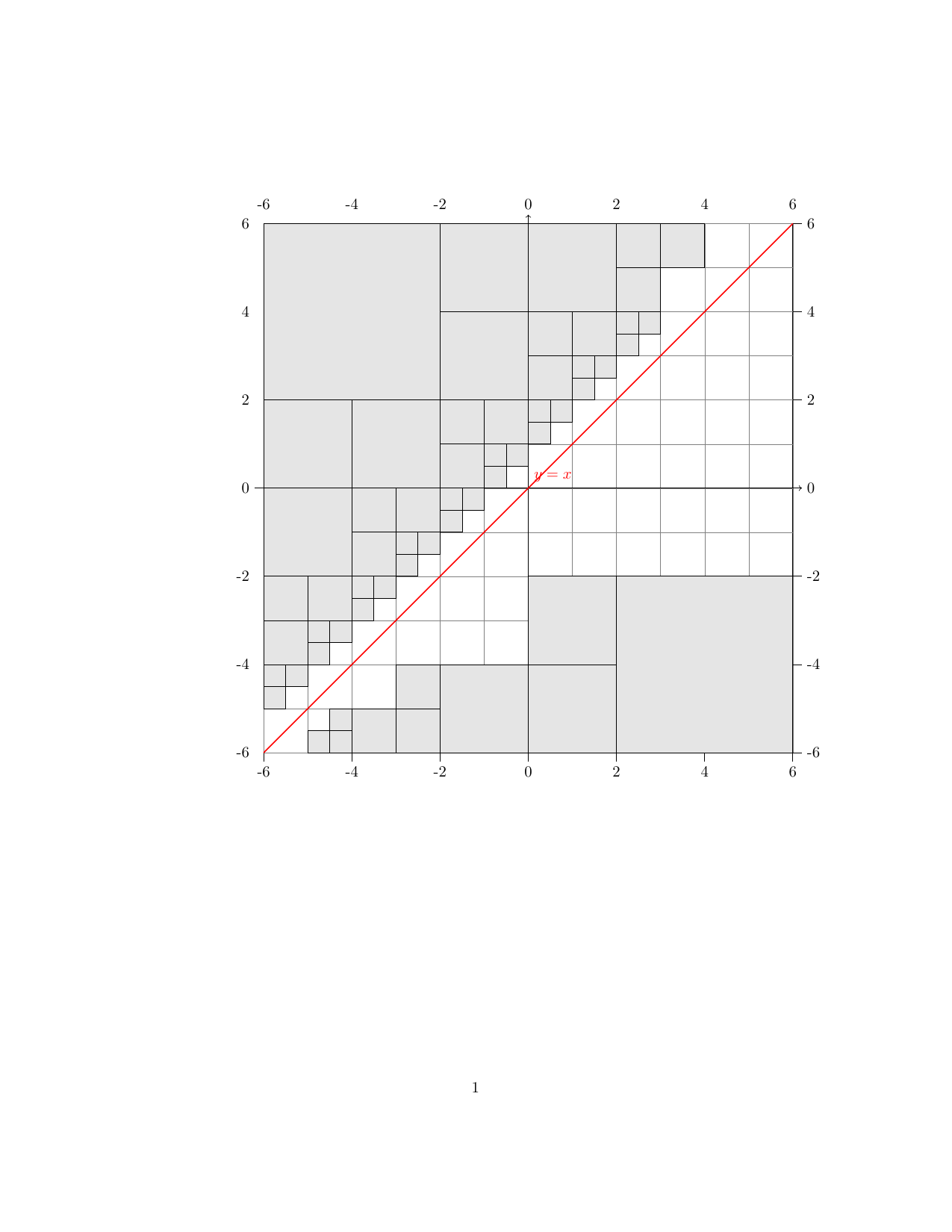}
        \caption{The Whitney type decomposition.}
        \label{fig-1}
    \end{figure}

This has previously appeared in \cite{Tao-Vargas-Vega:1998:bilinear-restri-kakeya}. Indeed, this Whitney type decomposition is possible because for given $a,b$, we consider the longest coordinate side-length $l_{max}$. There exists $k\in \mathbb{N}  $ such that $$2^{-k}<l_{max}\le 2^{-k+1}. $$
Now we do the binary divisions of each side. Then the two points $a,b$ locate inside different $2^{-k-1}$ cubes which are not adjacent but have adjacent parents. Note that these two little cubes are separated at least by a thin rectangle array of length $2^{-k-1}$ that is orthogonal to the longest side, and hence are not adjacent. Note that this is different from the usual Whitney decomposition, see e.g. \cite[Chapter 1, Theorem 3]{Stein:singular integrals}. 

So we can decompose $\widehat{f\sigma}$ correspondingly.  
\begin{align*}
\widehat{f\sigma}(t,x) \widehat{f\sigma} (t,x) &= \int_{\mathbb{R}^d} \int_{\mathbb{R}^d} e^{i(x\cdot\xi+t\phi(\xi))} e^{i(x\cdot \eta+t\phi(\eta))}f(\xi)g(\eta) d\xi d\eta \\
& = \sum_j\sum_k \sum_{k':\, \tau^j_k\sim \tau^j_{k'}} \widehat{f_k^j\sigma}(t,x) \widehat{f_{k'}^j \sigma}(t,x)
\end{align*}
The $(n+1)-$ dimensional Fourier support of $\widehat{f_k^j\sigma}$ is $\tilde{\tau_k^j} =\{(\phi(\xi), \xi):\, \xi \in \tau_k^j \}$. Hence the support of $\widehat{f_k^j\sigma} \widehat{f_{k'}^j \sigma}$ is contained in 
$$ \tilde{\tau_k^j}+\tilde{\tau_{k'}^j} =\{\bigl(\phi(\xi)+\phi(\xi'):\, \xi+\xi'\bigr):\, \xi\in \tau_k^j, \xi' \in \tau_{k'}^j \}. $$
Let $\phi$ be the elliptic phase defined in  Tao, Vargas, and Vega \cite[Page 968]{Tao-Vargas-Vega:1998:bilinear-restri-kakeya} and Tao \cite[Page 1381]{Tao:2003:paraboloid-restri}. Then 
\begin{align*}
 \phi(\xi)+ \phi(\xi') &=-\frac 12 |\xi|^2 -\frac 12 |\xi'|^2 +\epsilon \bigl(f(\xi)+f(\xi') \bigr)  \\
 &=\bigl( -\frac 12 +O(\epsilon) \bigr) \bigl( |\xi|^2+ |\xi'|^2\bigr) = \bigl( -\frac 12 +O(\epsilon) \bigr) \bigl( |\xi+\xi'|^2+ |\xi-\xi'|^2\bigr),
 \end{align*}
where $O(\epsilon)$ denotes a number that is between $0$ and $C\epsilon$ for some constant $C>0$. Indeed, the use of the big $O$ notation here is harmless because, after given $j, k$,  our goal is to counting the number of orthogonal pairs of cubes $\tau_{k'}^j$ such that $\tau_{k}^j\sim \tau_{k'}^j$ for a certain paraboloid in the sense of the definition at the beginning of this proof, see Inequalities \eqref{eq-orthogonal}  and \eqref{eq-orthogonal2} below.  We see that the Minkowski sum of the two sets above is contained in 
 $$ H_{j,k} = \{(b,a) \in \mathbb{R}\times \mathbb{R}^d:\, |a-\frac{k}{2^{j-1}} | \le \frac {C}{2^j},\frac 14 2^{-2j} \le b+(\frac 12 -O(\epsilon) ) |a|^2 \le 4d2^{-2j}, k\in \mathbb{Z}^{d}\},$$
For each $j, k$, $H_{j,k}$ is a the vertical truncation of an intercepted ``paraboloid'' with a cylinder in $\mathbb{R}^{d+1}$. Note that 
 \begin{equation}\label{eq-orthogonal}
 \sum_j \sum_k \sum_{k':\, \tau_k^j \sim \tau^j_{k'}} \chi_{H_{j,k}} \le C_d, 
 \end{equation}
 for some constant $C_d>0$. For each $j,k$, there are $C2^d$-$k'$ such that $\tau_{j, k'}\sim \tau_{j,k}$. Let $M=\max\{2[\ln (d+1)],2\}$. We decompose each $\tau_k^j$ into dyadic subcubes of sidelength $2^{-j-M}$. Consequently we have a corresponding decomposition of $\tau_k^j\times \tau_{k'}^j$ as follows. Set $\mathcal{D}$ the family of multi-indices $(m,m',l) \in \mathbb{Z}^d\times \mathbb{Z}^d\times \mathbb{Z}$, so that there exists $\tau_k^{l-M}$ and $\tau_{k'}^{l-M}$ with $\tau_m^l \subset \tau_k^{l-M}$, and $\tau^l_{m'} \subset \tau_{k'}^{l-M}$ and $\tau_k^{l-M}\sim \tau_{k'}^{l-M}$ where  $ j=l-M$. Then
 $$[-1/2,1/2]^d \times [-1/2,1/2]^d \setminus \beta = \cup_{\mathcal{D}} \tau_m^l \times \tau_{m'}^l. $$  
 Hence 
 $$\|\widehat{f\sigma}\|^2_{L^{2r} (\mathbb{R}^{d+1})}  = \|\widehat{f\sigma} \widehat{f\sigma}\|_{L^r(\mathbb{R}^{d+1})} = \|\sum_{\mathcal{D}} \widehat{f_m^l \sigma} \widehat{g_{m'}^l \sigma }\|_{L^r(\mathbb{R}^{d+1})}. $$
Note that if $(m,m',l) \in \mathcal{D}$, then the distance between $\tau_m^l$ and $\tau_{m'}^l$ is bigger than $2^{-l+M} \ge d2^{-l}$ and smaller than $4\sqrt{d} 2^{-l+M}$. We claim that there are rectangles $R_{m,m',l}$ and $c=c_d$ so that $\tau_m^l \times \tau_{m'}^l\subset R_{m,m',l}$ and $\sum_{\mathcal{D}} \chi_{(1+c) R_{m,m',l}} \le C_d$. This is the main reason we do a finer decomposition of $\tau_k^j$ taking into account that the ``paraboloid'' has a nonzero Gaussian curvature. We postpone the proof of this claim to the end of the proof. Assuming it holds, by Theorem \ref{Bernstein}, since $r<2$, we have 
$$ \|\sum_{\mathcal{D}} \widehat{f_m^l\sigma} \widehat{f_{m'}^l \sigma} \|_{L^r(\mathbb{R}^{d+1})} \le C_d \left( \sum_{\mathcal{D}} \|\widehat{f_m^l\sigma} \widehat{f_{m'}^l\sigma}\|^r_{L^r(\mathbb{R}^{d+1})} \right)^{1/r}. $$
Now we use Theorem \ref{thm-3-pro} to obtain
\begin{align*}
 & \left( \sum_{\mathcal{D}} \| \widehat{f_m^l\sigma} \widehat{f_{m'}^l\sigma}\|^r_{L^r(\mathbb{R}^{d+1})} \right)^{1/r} \\
 &\le C_{d, p} \left( \sum_l\sum_m \sum_{m':\, (m, m',l)\in \mathcal{D}} 2^{ldr\frac {2-p}{p}} \|f_m^l\|^r_{L^p(\mathbb{R}^d)} \|f_{m'}^l\|^r_{L^p(\mathbb{R}^d)} \right)^{1/r}
\end{align*}
Now for each $(m,l)$ there are at most $4^d2^{Md}$ indices $m'$ such that $(m,m',l) \in \mathcal{D}$. Hence 
\begin{align*}
 &\left( \sum_l\sum_m \sum_{m':\, (m, m',l)\in \mathcal{D}} 2^{ldr\frac {2-p}{p}} \|f_m^l\|^r_{L^p(\mathbb{R}^d)} \|f_{m'}^l\|^r_{L^p(\mathbb{R}^d)} \right)^{1/r} \\
 &\le C_d \left(\sum_l\sum_m 2^{ldr\frac {2-p}{p}} \|f_m^l\|^{2r}_{L^p(\mathbb{R}^d)}   \right)^{1/r}
\end{align*}
We will still have to justify the claim. Assume, for the sake of simplicity that $\tau_m^l\times \tau_{m'}^l$ is in the first quadrant of $\mathbb{R}^d$. Then $\tau_m^l+\tau_{m'}^l$ is contained on a set 
\begin{align*}
 H_{m, m',l}& :=\{ (a,b)\in \mathbb{R}^d\times \mathbb{R}:\, a =\frac {m+m'}{2^l}+\nu, \nu=(\nu_1, \cdots, \nu_n), 0\le \nu_i \le 2^{-l+1},\\
 & \frac 14 \times 2^{-2l+2M}\le b+\bigl(\frac 12 -O(\epsilon)\bigr) |a|^2 \le 16 d 2^{-2l+2M}\}. 
 \end{align*}
Consider the paraboloid defined by $$ b+\bigl(\frac 12 -O(\epsilon)\bigr) |a|^2 = \frac 14 \times 2^{-2l+2M}. $$ Take $\Pi_{m,m',l} $ to be the tangent plane to this paraboloid at the point $(a_0,b_0)$, with 
 $$a_0=(m+m')2^{-l}, b_0 = \bigl(-\frac 12 +O(\epsilon)\bigr) |a_0|^2 +\frac 14 \times 2^{-2l+2M}$$
 and passing through that point. Also consider the point $(a_1,b_1)$ with $a_1 = a_0 + (2^{-l+1}, \cdots, 2^{-l+1})$ and $b_1=\bigl(-\frac 12 +O(\epsilon)\bigr) |a_1|^2 -16 d \times 2^{-2l+2M}  $. Then the rectangle $R_{m,m',l}$ is defined as the only rectangle having a face contained in that hyperplane, and the points $(a_0,b_0)$, and $(a_1,b_1)$ as opposite vertices, as indicated in Figure \ref{fig-3}. 
 
   \begin{figure}[h]
        \centering
        \includegraphics[width=0.6\textwidth]{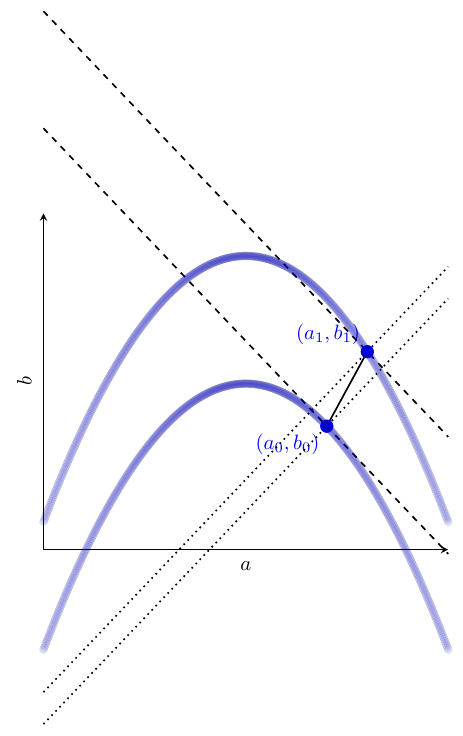}
        \caption{The refined Whitney type decomposition.}
        \label{fig-3}
    \end{figure}
 
Due to the convexity of paraboloids \cite[page 44]{Lieb-Loss:2001}, it follows that
$$H_{m,m',l}\subset R_{m,m',l}. $$
Moreover, one can see that, for small $c=c_d$ because of the consideration of the inscribed and circumscribed cubes to a sphere in $\mathbb{R}^d$, 
\begin{equation}\label{eq-v} 
\begin{split}
& (1+c)R_{m,m',l} \subset \{(a,b):\, a=(m+m')2^{-l}+\nu, \nu =(\nu_1, \cdots, \nu_d), |\nu_i |\le C_d 2^{-l+1},  \\
& C'_d 2^{-2l+2M}\le  b+\bigl(\frac 12 -O(\epsilon)\bigr) |a|^2 \le C''_d 2^{-2l+2M}  \}. 
\end{split}
\end{equation}
for some constants $C_d, C_d', C_d''$. Indeed, $C_d$ is natural. For $C_d'$ and $C_d''$,  the equation for the tangent plane $(a,b)\in \Pi_{m,m',l}$ is 
$$(1-O(\epsilon))a_0(a-a_0)+(b-b_0)=0. $$
Then $$|b-b_0| = \left| ( -1+O(\epsilon))a_0(a-a_0)\right|\le \frac {|m+m'|}{2^l} 2^{-l+1} \le 4d2^M 2^{-2l}\le C2^{-2l+2M}$$
for some absolute constant $C$, since the points are located in the square of side length $2^M$. Here $\frac M2\le \ln(d+1)\le M$ and so $d\sim 2^M. $ This is how the constants $C_d'$ and $C_d''$ are chosen. 

Therefore we have 
\begin{equation}\label{eq-orthogonal2}
\sum_{\mathcal{D}} \chi_{(1+c)R_{m,m',l}}=\sum_l\sum_m \sum_{m':\, (m, m',l)\in \mathcal{D} } \chi_{(1+c)R_{m,m',l}} \le C_d. 
\end{equation}
Indeed, we see that $H_{m,m',l}$ is contained in the rectangle $R_{m, m',l}$; and $(1+c) R_{m,m',l} $ is contained in a variant of the $H_{m,m',l}$ as in \eqref{eq-v}. Then given $m,l$, we count how many $m'$. First we know that there are $C_d$'s $k$. Then from the relation that $\tau_k^{l-M}\sim \tau_{k'}^{l-M}$, there are $C_d$'s $k'$. Then in $\tau_{k'}^{l-M}$, there are $C_d$'s $m'$. So in total there are $C_d$'s $m'$ if given $m, l$.  Now we finish the proof of Theorem \ref{thm-1}.
\end{proof}

\begin{proof}{\bf (The second proof. )}
We set $r=\frac q2=\frac {d+2}{d}$. We can assume that the support of $f$ is contained in the square $\{x\in \mathbb{R}^d:\, |x_i| \le \frac 12, 1\le i\le d\}$. For each $j\in \mathbb{Z}$, we decompose $\mathbb{R}^d$ into dyadic cubes $\tau_j^k$ of sidelength $2^{-j}$. Given a dyadic cube $\tau_j^k$ we will say that is the ``parent" of the $2^d$ dyadic cubes of sidelength $2^{-j-1}$ contained in it. We define that $\tau_k^j$ is ``orthogonal" to $\tau_{k'}^{j}$, denoted by $\tau_k^j\sim \tau_{k'}^j $,  if $\tau^j_k$ and $\tau^j_{k'}$ are not adjacent but have adjacent parents. For each $j\ge 0$, we write $f=\sum f_k^j$ where $f_k^j=f\chi_{\tau_k^j}$. Denote by $\beta$ the diagonal of $\mathbb{R}^d\times \mathbb{R}^d$, $\beta=\{ (x,x):\, x\in \mathbb{R}^d\}$. We have the following Whitney type decomposition, as indicated in Figure \ref{fig-1},  of $\mathbb{R}^d\times \mathbb{R}^d\setminus \beta$, 
$$[-1/2,1/2]^d\times [-1/2,1/2]^d \setminus \beta = \cup_j\cup_{k, k': \, \tau^j_k\sim \tau^j_{k'}} \tau_k^j\times \tau^j_{k'}. $$
This has previously appeared in \cite{Tao-Vargas-Vega:1998:bilinear-restri-kakeya} and  \cite[Proposition 4.24]{Killip-Visan:2008:clay-lecture-notes}. Indeed, given $\xi\neq \xi'$, there is a unique maximal pair $\xi\in \tau$ and $\xi'\in \tau'$ such that 
$$\tau\sim \tau', \text{ and }|\tau| =|\tau'|, \text{ and } \operatorname{dist} (\tau, \tau') \ge \frac {1}{\sqrt d}\operatorname{diam} (\tau). $$ 
Let $\mathcal{D}$ denote the collection of such pairs $\tau\sim \tau'$ as $\xi \neq \xi' $ varies over $[-\frac 12, \frac 12]^d. $ According to this definition, we have 
$$\sum_{(\tau, \tau') \in \mathcal{D}} \chi_\tau (\xi)\chi_{\tau'}(\xi') =1, \text{ for a.e. } (\xi, \xi')\in [-\frac 12, \frac 12]^d. $$
Note that since $\tau$ and $\tau'$ are maximal, $\operatorname{dist} (\tau, \tau') \le 4 \operatorname{diam}(\tau). $ In addition, this shows that given $\tau$,  there is a bounded number of $\tau'$ such that $(\tau, \tau')\in \mathcal{D}$. That is to say, 
$$\forall \tau,  \# \{\tau', \, \tau' \sim \tau \} \lesssim 1. $$
Note that this is different from the usual Whitney decomposition, see e.g. \cite[Chapter 1, Theorem 3]{Stein:singular integrals}. 

So we can decompose $\widehat{f\sigma}$ correspondingly.  
\begin{align*}
e^{-2it }\widehat{f\sigma}(t,x) \widehat{f\sigma} (t,x)&=\int_{\mathbb{R}^d} \int_{\mathbb{R}^d} e^{i(x\cdot\xi+t\phi(\xi))} e^{i(x\cdot \eta+t\phi(\eta))}f(\xi)g(\eta) d\xi d\eta \\
& =\sum_j\sum_k \sum_{k':\, \tau^j_k\sim \tau^j_{k'}} \widehat{f_k^j\sigma}(t,x) \widehat{f_{k'}^j \sigma}(t,x)
\end{align*}
The $(n+1)-$ dimensional Fourier support of $\widehat{f_k^j\sigma}$ is $\tilde{\tau_k^j} =\{(\phi(\xi), \xi):\, \xi \in \tau_k^j \}$. Hence the space-time Fourier transform support of $\widehat{f_k^j\sigma} \widehat{f_{k'}^j \sigma}$, $\operatorname{supp} \mathcal{F}_{t,x} \left( \widehat{f_k^j\sigma} \widehat{f_{k'}^j \sigma} \right) $,  is contained in 
$$ \operatorname{supp} \mathcal{F}_{t,x} \left( \widehat{f_k^j\sigma} \widehat{f_{k'}^j \sigma} \right)  \subset \tilde{\tau_k^j}+\tilde{\tau_{k'}^j} =\{\bigl(\phi(\xi)+\phi(\xi'):\, \xi+\xi'\bigr):\, \xi\in \tau_k^j, \xi' \in \tau_{k'}^j \}. $$
Here $\mathcal{F}_{t,x}(f)(\tau, \xi) =(2\pi)^{-\frac {d+1}{2}} \int_{\mathbb{R}^{d+1}} e^{i\xi \cdot x+it\tau} f(\tau, \xi) d\tau d\xi$.  The $\phi$ is the elliptic phase defined in  Tao, Vargas, and Vega \cite[Page 968]{Tao-Vargas-Vega:1998:bilinear-restri-kakeya} and Tao \cite[Page 1381]{Tao:2003:paraboloid-restri}. We follow the approach outlined in \cite[Lemma A.2]{Frank-Lieb-Sabin:2007:maxi-sphere-2d} or \cite[Proposition 4.24]{Killip-Visan:2008:clay-lecture-notes}. The Taylor expansion leads to the formula
\begin{align*}
\phi(\xi)+\phi(\xi') &= 2\phi(\frac {\xi+\xi'}{2}) +a(\xi, \xi') |\xi-\xi'|^2, \\
\phi(\frac {\xi+\xi'}{2}) &= \phi(\frac {c(\tau_k^j+\tau_{k'}^j)}{2}) +\frac 12 \nabla \phi (\frac {c(\tau_k^j+\tau_{k'}^j)}{2}) \cdot (\xi+\xi'-c(\tau_k^j+\tau_{k'}^j))\\
&+b(\eta+\eta', c(\tau_k^j+\tau_{k'}^j)) |\xi+\xi'-c(\tau_k^j+\tau_{k'}^j)|^2,
\end{align*}
where $c(\tau_k^j+\tau_{k'}^j)$ denotes the center of the cube $\tau_k^j+\tau_{k'}^j$, for two functions $a$ and $b$ satisfying 
$$ -\frac 38 \le a(\xi, \xi')\le -\frac 18, \, -\frac {3}{16} \le b(\xi+\xi', c(\tau_k^j+\tau_{k'}^j))  \le -\frac {1}{16},$$
assuming that $\epsilon$ in defining the elliptic phase in Tao, Vargas, and Vega \cite[Page 968]{Tao-Vargas-Vega:1998:bilinear-restri-kakeya} and Tao \cite[Page 1381]{Tao:2003:paraboloid-restri} is sufficiently small. Further the space-time Fourier transform
$$ \operatorname{supp} \mathcal{F}_{t,x} \left( \widehat{f_k^j\sigma} \widehat{f_{k'}^j \sigma} \right) \subset R(\tau_k^j+ \tau_{k'}^j),$$
where 
$$ R(\tau_k^j+ \tau_{k'}^j) := \left\{ (b,a):\, a \in \tau_k^j+ \tau_{k'}^j, \, -2\le \frac { b-2\phi(\frac {c(\tau_k^j+\tau_{k'}^j)}{2})- \nabla \phi (\frac {c(\tau_k^j+\tau_{k'}^j)}{2}) \cdot (a-c(\tau_k^j+\tau_{k'}^j)) }{\operatorname{diam}(\tau_k^j+\tau_{k'}^j)^2} \le -\frac {1}{32} \right\}. $$
Note that in computing these bounds forming the set, we have used 
$$ 0\le |a-c(\tau_k^j+\tau_{k'}^j)| \le \operatorname{diam}(\tau_k^j), \, \operatorname{diam}(\tau_k^j)\le |\xi-\xi'|\le 4 \operatorname{diam}(\tau_k^j), \operatorname{diam}(\tau_k^j+\tau_{k'}^j) = 2 \operatorname{diam}(\tau_k^j). $$
Again by the Taylor expansion, we have 
$$- \frac {3}{16} |c(\tau_k^j+\tau_{k'}^j) -a|^2 \le  \phi(\frac {a}{2}) - \phi(\frac {c(\tau_k^j+\tau_{k'}^j)}{2}) -\nabla \phi (\frac {c(\tau_k^j+\tau_{k'}^j)}{2})\cdot \frac 12 (a-c(\tau_k^j+\tau_{k}^j))\le  -\frac 1{16} |c(\tau_k^j+\tau_{k'}^j ) - a|^2. $$
Therefore for $(b,a) \in R(\tau_k^j+\tau_{k'}^j )$, noting that $0\le |c(\tau_k^j+\tau_{k'}^j) -a|\le \operatorname{diam} (\tau_k^j)$, 
\begin{equation*}
\begin{split}
b-2\phi(\frac a2) &= -2\left( \phi(\frac a2 ) -\phi(\frac {c(\tau_k^j+\tau_{k'}^j)}{2})  -\nabla \phi (\frac {c(\tau_k^j+\tau_{k'}^j)}{2}) \cdot (\frac {a-c(\tau_k^j+\tau_{k'}^j)}{2}) \right)\\
&+b +2\left(-\phi(\frac {c(\tau_k^j+\tau_{k'}^j)}{2})  -\nabla \phi (\frac {c(\tau_k^j+\tau_{k'}^j)}{2}) \cdot (\frac {a-c(\tau_k^j+\tau_{k'}^j)}{2})\right). 
\end{split}
\end{equation*}
Then $$ -8 \operatorname{diam}(\tau_k^j)^2 +\frac 18 |c(\tau_k^j+\tau_{k'}^j)-a|^2\le b-2\phi(\frac a2) \le -\frac 18 \operatorname{diam}(\tau_k^j)^2 +\frac 38 |c(\tau_k^j+\tau_{k'}^j)-a|^2. $$
That is to say, 
$$ -8 \operatorname{diam}(\tau_j^k)^2 \le b-2\phi(\frac a2)  \le -\frac 1{32} \operatorname{diam}(\tau_k^j)^2. $$
Therefore if two pair of close cubes $\tau_k^j\sim \tau_{k'}^j$ and $\tau_{m}^n\sim \tau_{m'}^n$ are such that $R(\tau_k^j+\tau_{k'}^j)$ and $R(\tau_m^n+\tau_{m'}^n)$ intersect, they must have a similar diameter. The same holds for the dilates $(1+\alpha) R(\tau_k^j+\tau_{k'}^j)$ for some small $\alpha$ by the same argument. Indeed, we can take $\alpha <0.01$. Lastly if the diameters are in a finite number, the cubes are also in a finite number since their centers satisfy
$$|c(\tau_k^j+\tau_{k'}^j) - c(\tau_m^n+\tau_{m'}^n)| \le |c(\tau_k^j+\tau_{k'}^j) -a| +| c(\tau_m^n+\tau_{m'}^n) -a| \le 2 \operatorname{diam}(\tau_j^k). $$
Then we see that 
 \begin{equation}\label{eq-orthogonal3}
 \sup_{(b,a)} \sum_{(\tau_k^j, \tau_{k'}^j)\in \mathcal{D}} \chi_{(1+\alpha) R(c(\tau_k^j+\tau_{k'}^j)}(b,a)\le C_d, 
 \end{equation}
 for some constant $C_d>0$. Then
  $$\|\widehat{f\sigma}\|^2_{L^{2r} (\mathbb{R}^{d+1})}  = \|\widehat{f\sigma} \widehat{f\sigma}\|_{L^r(\mathbb{R}^{d+1})} = \|\sum_{\mathcal{D}} \widehat{f_k^j \sigma} \widehat{g_{k'}^j \sigma }\|_{L^r(\mathbb{R}^{d+1})}. $$
By Theorem \ref{Bernstein}, since $r<2$, we have 
$$ \|\sum_{\mathcal{D}} \widehat{f_k^j\sigma} \widehat{f_{k'}^j \sigma} \|_{L^r(\mathbb{R}^{d+1})} \le C_d \left( \sum_{\mathcal{D}} \|\widehat{f_k^j\sigma} \widehat{f_{k'}^j\sigma}\|^r_{L^r(\mathbb{R}^{d+1})} \right)^{1/r}. $$
Now we use Theorem \ref{thm-3-pro} to obtain
\begin{align*}
 & \left( \sum_{\mathcal{D}} \| \widehat{f_k^j\sigma} \widehat{f_{k'}^j\sigma}\|^r_{L^r(\mathbb{R}^{d+1})} \right)^{1/r} \\
 &\le C_{d, p} \left( \sum_j\sum_k \sum_{k'} 2^{jdr\frac {2-p}{p}} \|f_j^k\|^r_{L^p(\mathbb{R}^d)} \|f_{k'}^j\|^r_{L^p(\mathbb{R}^d)} \right)^{1/r},
\end{align*}
where $p,q,r$ are assumed in Theorem \ref{thm-3-pro}.  Now for each $(j,k)$ there are at most $2d$ indices $k'$ such that $\tau_k^j \sim \tau_{k'}^j$. Hence 
\begin{align*}
 &\left( \sum_j \sum_k \sum_{k'} 2^{ldr\frac {2-p}{p}} \|f_k^j\|^r_{L^p(\mathbb{R}^d)} \|f_{k'}^j\|^r_{L^p(\mathbb{R}^d)} \right)^{1/r} \\
 &\le C_d \left(\sum_j\sum_l 2^{jdr\frac {2-p}{p}} \|f_k^j\|^{2r}_{L^p(\mathbb{R}^d)}   \right)^{1/r}
\end{align*}
The last sum is recognized to be $\|f\|_{X_{p,q}}$ with $q=2r=\frac {2(d+2)}{d}$ and $\frac {1}{p'}>\frac {d+3}{d+1}\frac 1q$. Thus
$$\|\widehat{f\sigma}\|_{L^q(\mathbb{R}^{d+1})}\le \|f\|_{X_{p,q}}. $$
 Now we finish the proof of Theorem \ref{thm-1}.
\end{proof}

By using this Bourgain’s $X_{p,q}$ estimate, we will prove Lemma \ref{le-refinement-of-Tomas-Stein}. 
\begin{lemma}\label{le-intermediate}
For $p\in (1, 2)$. Then there exists $\mu\in (0,\frac 1p)$ such that for any $f\in L^2(\Gamma, \sigma)$, we have 
\begin{equation}
\| \widehat{f\sigma}\|_{L^{2+\frac 4d}}
\le C \left(  \sup_{(j,k) \in \mathbb{Z}\times \mathbb{Z}^d}  |\mathcal{C}_k^j|^{\frac {p-2}{2}}  \int_{\mathcal{C}_k^j}|f|^p \right)^{\mu} \|f\|^{1-\mu p}_{L^2( \Gamma, \sigma)}.
\end{equation}
where $C=C(p,q)$ and $\mu=\mu(p,q)$. 
\end{lemma}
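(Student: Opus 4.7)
The plan is to apply Theorem \ref{thm-1} and then trade the $X_{p,q}$-norm for the desired product $A^{\mu}\|f\|_{L^2}^{1-\mu p}$ via dyadic pigeonhole. Setting $q=2+4/d$, $a_{j,k}:=|\mathcal{C}_k^j|^{(p-2)/2}\int_{\mathcal{C}_k^j}|f|^p\,d\sigma$, and $A:=\sup_{(j,k)}a_{j,k}$, Theorem \ref{thm-1} gives $\|\widehat{f\sigma}\|_{L^q}^q\le \sum_{(j,k)} a_{j,k}^{q/p}$. Hence it suffices to prove
\begin{equation*}
\sum_{(j,k)} a_{j,k}^{q/p}\;\le\; C\,A^{(q-2)/p}\|f\|_{L^2}^2,
\end{equation*}
which corresponds to $\mu=(q-2)/(pq)=2/(p(d+2))\in(0,1/p)$.

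The crux is H\"older's inequality on each cap,
\begin{equation*}
a_{j,k}\le \left(\int_{\mathcal{C}_k^j}|f|^2\,d\sigma\right)^{p/2},
\end{equation*}
which bridges the cap $L^p$-average appearing in $a_{j,k}$ to the cap $L^2$-mass. Next I would decompose dyadically via $\Lambda_n:=\{(j,k):\,2^{-n-1}A<a_{j,k}\le 2^{-n}A\}$, so that $\sum a_{j,k}^{q/p}\le \sum_{n\ge 0}|\Lambda_n|(2^{-n}A)^{q/p}$. Membership in $\Lambda_n$ combined with the H\"older bound forces $\int_{\mathcal{C}_k^j}|f|^2\gtrsim (2^{-n}A)^{2/p}$, and summing this lower bound over $\Lambda_n$ and swapping the order of integration gives
\begin{equation*}
|\Lambda_n|\,(2^{-n}A)^{2/p}\;\lesssim\; \int_{\Gamma}|f|^2(x)\,\Big(\sum_{(j,k)\in\Lambda_n}\chi_{\mathcal{C}_k^j}(x)\Big)d\sigma(x)\;\lesssim\; M_n\,\|f\|_{L^2}^2,
\end{equation*}
where $M_n$ is the $L^\infty$-norm of the indicator-sum. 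A covering/multiplicity analysis then shows that $M_n$ grows at most polynomially in $n$: caps at any common scale have $O(1)$ overlap by construction, and the number of scales $k$ at which the unique cap $\mathcal{C}_k^{j_k(x)}$ containing $x$ lies in $\Lambda_n$ is polynomially bounded in $n$ upon combining the monotonicity of $k\mapsto \int_{\mathcal{C}_k^{j_k(x)}}|f|^p$ with the a priori bound $a_{j,k}\le \|f\|_{L^2}^p$ (itself another application of H\"older together with the $L^2$-budget of $f$).

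Plugging back yields $|\Lambda_n|\lesssim \mathrm{poly}(n)\,(2^n/A)^{2/p}\|f\|_{L^2}^2$, and summing,
\begin{equation*}
\sum_{(j,k)} a_{j,k}^{q/p}\;\lesssim\; A^{(q-2)/p}\|f\|_{L^2}^2\sum_{n\ge 0}\mathrm{poly}(n)\,2^{-n(q-2)/p},
\end{equation*}
a geometric-times-polynomial series that converges precisely because $q>2$. Taking $q$-th roots then yields the lemma with $\mu=(q-2)/(pq)$. The main obstacle I expect is the multiplicity bound on $M_n$: although bounded overlap within each single scale is built into the cap construction, controlling the total contribution across scales requires a careful argument ruling out extended runs of nested caps with $a_{j_k(x),k}\sim 2^{-n}A$, which is where the $L^2$-finiteness of $f$ enters in an essential way.
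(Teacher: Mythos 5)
Your first steps are fine: $\|\widehat{f\sigma}\|_{L^q}^q\le\|f\|_{X_{p,q}}^q=\sum_{(j,k)}a_{j,k}^{q/p}$ by Theorem~\ref{thm-1}, and the H\"older bound $a_{j,k}\le\bigl(\int_{\mathcal{C}_k^j}|f|^2\bigr)^{p/2}$ is correct, as is the Fubini step leading to $|\Lambda_n|(2^{-n}A)^{2/p}\lesssim M_n\|f\|_{L^2}^2$. The gap is the claim that $M_n$ is polynomially bounded in $n$; this is not proved, and it is in fact false in general, which makes the final geometric-times-polynomial series argument break down for $d\ge 2$.

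Here is the obstruction concretely. Take $f=f_1+f_2$ with $\|f_1\|_{L^2}^2=\|f_2\|_{L^2}^2=1/2$ and disjoint supports, where $f_1$ is a single bump (so $A\sim 1$), and $f_2(y)=c\,|y-y_0|^{-d/2}$ on the annulus $\{r_1<|y-y_0|<r_2\}$ with $K:=\log_2(r_2/r_1)$. Normalization forces $c\sim K^{-1/2}$, and then for \emph{every} scale $m$ with $2^{-m}\in[r_1,r_2]$ one finds $a_m(y_0)\sim c^p\sim K^{-p/2}$. Choosing $K\sim (2^n/A)^{2/p}$ puts all $\sim K$ of these nested caps into a single level set $\Lambda_n$, so $M_n\gtrsim 2^{2n/p}$ — exponential in $n$. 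Your final sum is $A^{(q-2)/p}\|f\|_{L^2}^2\sum_n M_n 2^{-n(q-2)/p}$, and with $M_n\sim 2^{2n/p}$ the $n$-th summand behaves like $2^{n(4-q)/p}$; since $q=2+4/d\le 4$ for $d\ge 2$, this diverges. Note the target inequality itself is not violated by this $f$ (the actual $\sum a^{q/p}$ is still $\lesssim A^{(q-2)/p}$), so the loss is entirely in your estimate for $|\Lambda_n|$: the Fubini bound $|\Lambda_n|\lesssim M_n(2^n/A)^{2/p}\|f\|_{L^2}^2$ double-counts the $L^2$-mass of nested caps $M_n$ times over, which costs a factor that your argument does not recover. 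Separately, you are aiming at the endpoint $\mu=(q-2)/(pq)=2/(p(d+2))$, whereas the paper's proof achieves only $\mu<2/(p(d+2))$ by a different device: it pulls out a sup to power $\alpha=q(1-\beta)$ with $\beta\in(2/q,1)$, $\beta>p/2$, then splits $\int_{\mathcal{C}_k^j}|f|^p$ according to whether $|f|>|\mathcal{C}_k^j|^{-1/2}$ or not, summing a geometric series in the scale; the convergence of that series is exactly what requires the strict inequality $\beta q>2$, and this level-set split avoids the cross-scale multiplicity issue entirely. To salvage your route you would need a counting argument that does not pass through the pointwise multiplicity $M_n$, for instance one that exploits the upper bound $a_{j,k}\le 2^{-n}A$ on the selected caps to control how much disjoint $L^2$-mass a long chain of nested caps in $\Lambda_n$ must carry; as written, the proposal leaves that as a genuine open step.
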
 
If this lemma were established, then Lemma \ref{le-refinement-of-Tomas-Stein} follows by a simple application of H\"older inequality. For its proof, we will follow \cite[Theorem 1.3]{Begout-Vargas:2007:profile-schrod-higher-d}. 
\begin{proof}
{\bf Step 1. } By homogeneity, we can assume that $\|f\|_{L^2(\Gamma, \sigma)}=1$. Then it suffices to show that for any function $f\in L^2(S^d)$ such that $\|f\|_{L^2(\Gamma, \sigma)} =1$, 
$$ \sum_{k}\sum_j |\mathcal{C}_k^j|^{q/2} 
\big(\frac{1}{|\mathcal{C}_k^j|} \int_{\mathcal{C}_k^j}|f|^p
\big)^{q/p} \le C(p,q) \left(\sup_{j,k} \left( |\mathcal{C}_k^j|^{\frac {p-2}{2p}} \bigl( \int_{\mathcal{C}_k^j} |f|^p\bigr)^{1/p} \right) \right)^{\alpha}, $$
where $\alpha = \mu pq$ and where $\mu$ has to be determined. We take $\alpha$ and $\beta$ such that $\frac 2q<\beta<1$, $\beta>\frac p2$ and $\alpha+q\beta =q$. Such $\alpha,\beta$ are possible if $\mu$ is chosen such that $$0<\mu<\min\{\frac 1p-\frac 12, \, \frac {2}{(d+2)p}\}. $$ Then taking out the $L^\infty$ norm directly, 
\begin{equation*}
\begin{split}
& \sum_{k}\sum_j |\mathcal{C}_k^j|^{q/2} 
\big(\frac{1}{|\mathcal{C}_k^j|} \int_{\mathcal{C}_k^j}|f|^p\big)^{q/p} =\sum_{k}\sum_j |\mathcal{C}_k^j|^{\frac {p-2}{2p} q} 
\big(\int_{\mathcal{C}_k^j}|f|^p\big)^{q/p} \\
&\le C(p,q) \left(\sum_{j,k} |\mathcal{C}_k^j|^{\frac {p-2}{2p} \beta q} \left( \int_{\mathcal{C}_k^j } |f|^p \right)^{\frac {\beta q} p}    \right) \left(\sup_{j,k} \left( |\mathcal{C}_k^j|^{\frac {p-2}{2p}} \bigl( \int_{\mathcal{C}_k^j} |f|^p\bigr)^{1/p} \right) \right)^{\alpha}
\end{split}
\end{equation*}
We set $\mu=\frac {\alpha}{pq} =\frac {1-\beta}{p} \in (0,\frac 1p)$. Hence it is enough to show that 
$$ \sum_{j,k} |\mathcal{C}_k^j|^{\frac {p-2}{2p} \beta q} \left( \int_{\mathcal{C}_k^j } |f|^p \right)^{\frac {\beta q} p} \le C(p,q). $$
{\bf Step 2. }Since $\beta q >2$, $\frac {\beta q} p>1$.  By the convexity of the $\ell^{\frac {\beta q}{p}}$ with $\frac {\beta q}{p}>1$, we split the sum 
\begin{equation*}
\begin{split}
& \sum_{j,k} |\mathcal{C}_k^j|^{\frac {p-2}{2p} \beta q} \left( \int_{\mathcal{C}_k^j } |f|^p \right)^{\frac {\beta q} p} \\
&\le C \sum_{j,k} |\mathcal{C}_k^j|^{\frac {p-2}{2p} \beta q} \left( \int_{\mathcal{C}_k^j \cap \{|f|>|\mathcal{C}_k^j |^{-1/2}\} } |f|^p \right)^{\frac {\beta q} p} \\
&+ C \sum_{j,k}  |\mathcal{C}_k^j|^{\frac {p-2}{2p} \beta q} \left( \int_{\mathcal{C}_k^j \cap \{|f|\le |\mathcal{C}_k^j|^{-1/2}\} } |f|^p \right)^{\frac {\beta q} p}\\
&:= C(A+B),
\end{split}
\end{equation*}
where $C=C(p,q)$. We estimate the first term. For $j\in \mathbb{Z}$, define $$f^j = f1_{\{|f|>|\mathcal{C}_k^j |^{-1/2}\} }. $$
Then 
$$A= \sum_{j,k} |\mathcal{C}_k^j|^{\frac {p-2}{2} \frac {\beta q}{p}} \left( \int_{\mathcal{C}_k^j } |f^j|^p \right)^{\frac {\beta q} p}. $$
Then by the comparison of the $\ell^{\frac {q \beta }{p}}$ norm and the Fubini theorem, we have 
\begin{equation*}
\begin{split}
A &\le \left(  \sum_{j,k}  |\mathcal{C}_k^j|^{\frac {p-2}{2}}  \int_{\mathcal{C}_k^j} |f^j |^p \right)^{\frac {\beta q} p}\\
&=  \left(  \sum_j |\mathcal{C}_k^j|^{\frac {p-2}{2}}  \int  |f^j |^p \right)^{\frac {\beta q} p}  \\
&\le \left( \int_{S^d} |f|^p \sum_{j:\, |f| > |\mathcal{C}_k^j |^{-1/2} } |\mathcal{C}_k^j |^{\frac {p-2}{2}} \right)^{\frac {\beta q}{p}} 
\end{split}
\end{equation*}
Since $2-p>0$, we can sum the geometric series and obtain 
$$A\le C \left(\int_{S^d} |f|^p |f|^{2-p} \right)^{\frac {\beta q}p } \le C \left(\int_{S^d} |f|^2 \right)^{\frac {\beta q}p } \le C$$
because $\|f\|_{L^2(\Gamma, \sigma)} =1$. We now estimate the term $B$. For $j\in \mathbb{Z}$, define $f_j = f1_{|f|\le |\mathcal{C}_k^j |^{-1/2}}$. Then
$$B=  \sum_{j,k} |\mathcal{C}_k^j|^{\frac {p-2}{2p} \beta q} \left( \int_{\mathcal{C}_k^j } |f_j|^p \right)^{\frac {\beta q} p}.$$
We use the H\"older inequality with exponents $\frac {\beta q}{p}$ and $\frac {\beta q}{\beta q-p}$ to the term $\int_{\mathcal{C}_k^j} |f_j|^p$: 
\begin{equation*}
\begin{split}
B & \le \sum_{j,k} |\mathcal{C}_k^j|^{\frac {p-2}{2p} \beta q}  \int_{\mathcal{C}_k^j } |f_j|^{\beta q} \left( |\mathcal{C}_k^j|^{\frac {\beta q -p}{\beta q}}\right)^{\frac {\beta q}p} \\
&=  \sum_{j,k} |\mathcal{C}_k^j|^{\frac{\beta q}{2}-1}  \int_{\mathcal{C}_k^j } |f_j|^{\beta q} \\
&=\sum_{j, k} 2^{jd (1-\frac {\beta q}2)} \int_{\mathcal{C}_k^j } |f_j|^{\beta q} \le \sum_{j} 2^{jd (1-\frac {\beta q}2)} \int_{S^d} |f_j|^{\beta q}  \\
&= \int_{S^d} |f|^{\beta q} \sum_{j:\, |f| \le 2^{jd/2}} 2^{jd(1-\frac {\beta q}{2})} \le  \int_{S^d} |f|^2\le 1, 
\end{split}
\end{equation*}
since $1-\frac {\beta q}{2} <0$ and we are summing a geometric series. Then Lemma \ref{le-intermediate} follows. 
\end{proof}

\section{Profile decomposition: the first decompostion}\label{earthworm1}
We first state several definitions as in \cite{Christ-Shao:extremal-for-sphere-restriction-I-existence}.
\begin{definition}\label{rescaled-maps}
Let $\mathcal{C} = \mathcal{C}(z,r)$ be a cap. For $z\in S^d$, define 
$$ \Psi_z(x) =r^{-1} L_z(\Pi_{H_z}(x))$$
for all $x$ in the hemisphere $\{x:\, x\cdot z \ge 0\}$, where $\Pi_{H_z}$ is the orthogonal projection onto $H_z$, and $L_z: H_z\to \mathbb{R}^2$ is an arbitrary linear isometry. The rescaled map associated with $\mathcal{C}$ is defined by $\Phi_{\mathcal{C}_{z,r}} =\Psi_z^{-1}$. 
\end{definition}

\begin{remark}
The map $\Phi_{\mathcal{C}(z,r)}: \, B(0,r^{-1}) \to \text{`` the indicated hemisphere"}$. For instance, for $z=(0,0,1)$, 
$$ \Phi_{\mathcal{C}(z,r)} (y_1,y_2) = (ry_1,ry_2, (1-r^2|y|^2)^{1/2}), \forall y \in B(0,r^{-1}). $$
\end{remark}

\begin{definition}\label{normalized-scaling}
Let $\mathcal{C} =\mathcal{C}(z,r)$ be a cap. For $f\in L^2(S^d)$, define the pullback of $f$ by 
$$ \Phi_{\mathcal{C}}^* f(y) = r^{d/2} \bigl( f\circ \Phi_\mathcal{C} \bigr) (y). $$
\end{definition}

\begin{remark} 
We compute the following $L^2$ norms and find that $\| \Phi_\mathcal{C}^* f\|^2_{L^2(\mathbb{R}^d)}$ and $\|f\|_{L^2(\Gamma, \sigma)}$ are comparable. 
\begin{equation}\label{sphere-Jacobian}
\begin{split}
\| \Phi_\mathcal{C}^* f\|^2_{L^2(\mathbb{R}^d)} &= r^d \|f\circ \Phi_\mathcal{C}\|_{L^2(\mathbb{R}^d)}  \\
& =r^d \int_{B(0,r^{-1})} \left| f\circ \Pi^{-1}_{H_z}\circ L_z^{-1}(ry) \right|^2 dy \\
& =r^d \int_{B(0,r^{-1})} \left| f\circ \Pi^{-1}_{H_z}\circ L_z^{-1}(ry) \right|^2 \sqrt{1-|ry|^2} \frac {dy}{\sqrt{1-|ry|^2}}  \\
& = r^d \int_{S^d} \left| f(z)\right|^2 J_y d\sigma(z)\\
\end{split}
\end{equation}
Here $J_y = \sqrt{1-r^2|y|^2}$. This formula and the last two lines in \eqref{sphere-Jacobian} use \cite[Eq. (68) on page 498]{Stein:1993}. We see that if $0<r<1/2$, then $\frac {\sqrt{3}} {2} \le J_y \le 1$. 
\end{remark}

As a consequence of the refinement in Lemma \ref{le-refinement-of-Tomas-Stein}, we have
\begin{proposition}\label{prop-first-decomp}
For any $\delta>0$ there exists $C_\delta<\infty$ and $\eta_\delta>0$ with the following properties. If $f_\nu\in L^2(\Gamma, \sigma)$ satisfies $\|\widehat{f_\nu\sigma}\|_{2+4/d} \ge \delta \mathcal{R} \|f_\nu\|_2$, then there exists $N\in \mathbb{N}$,  $\{f^j_\nu\}$, $\mathcal{C}_\nu^j$ and $e_\nu^N$ satisfying that
\begin{align}
& f_\nu=\sum_{j=1}^N f_\nu^j+e_\nu^N,\\
& \limsup_{\nu} \frac {r_\nu^j}{r_\nu^k}+\frac {r_\nu^k}{r_\nu^j}+\frac {|z_\nu^j-z_\nu^k|}{r_\nu^j}=\infty,\,\|f_\nu\|_2^2=\sum_{j=1}^N \|f_\nu^j\|_2^2+ \|e_\nu^N\|_2^2,\\
& |f_\nu^j|\le C_\delta \|f_\nu\|_2|\mathcal{C}_\nu^j|^{-1/2} \chi_{\mathcal{C}_\nu^j},\,\|f_\nu^j\|_2\ge \eta_\delta \|f_\nu\|_2,\\
& \|\widehat{e^N\sigma}\|_{2+4/d} \le \delta \mathcal{R} \|f_\nu\|_2.
\end{align} Here both $C^{-1}_\delta$ and $\eta_\delta$ are proportional to $\delta^{O(1)}$. 
\end{proposition}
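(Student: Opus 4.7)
The plan is an iterative extraction of cap-mass concentrations via Lemma~\ref{le-refinement-of-Tomas-Stein}, followed by a combinatorial coalescing step to guarantee the asymptotic cap separation. Set $e_\nu^0 := f_\nu$. So long as $\|\widehat{e_\nu^{j-1}\sigma}\|_{2+4/d} \ge \delta\mathcal{R}\|f_\nu\|_2$, I would apply Lemma~\ref{le-refinement-of-Tomas-Stein}, together with $\|e_\nu^{j-1}\|_2\le\|f_\nu\|_2$, to choose a cap $\mathcal{C}_\nu^j=\mathcal{C}(z_\nu^j,r_\nu^j)$ within a factor of $2$ of the supremum, namely
\[
|\mathcal{C}_\nu^j|^{-1/2}\int_{\mathcal{C}_\nu^j}|e_\nu^{j-1}|\,d\sigma \;\ge\; c_0\,\delta^{1/\alpha}\|f_\nu\|_2 \;=:\; c_0\beta\|f_\nu\|_2 .
\]
Fix a large universal $M=M(c_0)$, put $A_j := M\beta\|f_\nu\|_2|\mathcal{C}_\nu^j|^{-1/2}$, and define
\[
f_\nu^j := e_\nu^{j-1}\,\chi_{\mathcal{C}_\nu^j}\,\chi_{\{|e_\nu^{j-1}|\le A_j\}}, \qquad e_\nu^j := e_\nu^{j-1}-f_\nu^j ,
\]
halting at step $N$ when $\|\widehat{e_\nu^N\sigma}\|_{2+4/d}<\delta\mathcal{R}\|f_\nu\|_2$, which supplies the error bound in the statement.

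The pointwise control $|f_\nu^j|\le C_\delta\|f_\nu\|_2|\mathcal{C}_\nu^j|^{-1/2}\chi_{\mathcal{C}_\nu^j}$ with $C_\delta=M\beta\sim\delta^{1/\alpha}$ is built into the construction, so $C_\delta^{-1}$ is a power of $\delta^{-1}$. The $L^2$-orthogonality $\|f_\nu\|_2^2=\sum_j\|f_\nu^j\|_2^2+\|e_\nu^N\|_2^2$ is automatic because $\supp(f_\nu^j)\cap\supp(e_\nu^j)=\emptyset$ and, inductively, $\supp(f_\nu^j)\cap\supp(f_\nu^i)=\emptyset$ for $i<j$. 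For the lower bound on $\|f_\nu^j\|_2$, Chebyshev gives $|\{|e_\nu^{j-1}|>A_j\}|\le|\mathcal{C}_\nu^j|/(M\beta)^2$, so Cauchy--Schwarz yields
\[
\int_{\mathcal{C}_\nu^j\cap\{|e_\nu^{j-1}|>A_j\}}|e_\nu^{j-1}|\,d\sigma \;\le\; |\mathcal{C}_\nu^j|^{1/2}\|f_\nu\|_2/(M\beta);
\]
choosing $M\gtrsim 1/\beta^2$ makes this at most half the lower bound $c_0\beta\|f_\nu\|_2|\mathcal{C}_\nu^j|^{1/2}$, hence $\int_{\mathcal{C}_\nu^j}|f_\nu^j|\ge\tfrac12 c_0\beta\|f_\nu\|_2|\mathcal{C}_\nu^j|^{1/2}$, and a second Cauchy--Schwarz produces $\|f_\nu^j\|_2\ge\tfrac12 c_0\beta\|f_\nu\|_2=:\eta_\delta\|f_\nu\|_2$, with $\eta_\delta\sim\delta^{1/\alpha}$. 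The squared-norm orthogonality then forces the extraction to terminate after $N\le\eta_\delta^{-2}$ steps, uniformly in $\nu$.

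The main obstacle is the asymptotic cap separation
\[
\limsup_\nu\Bigl(\tfrac{r_\nu^j}{r_\nu^k}+\tfrac{r_\nu^k}{r_\nu^j}+\tfrac{|z_\nu^j-z_\nu^k|}{r_\nu^j}\Bigr)=\infty,\qquad j\ne k,
\]
which I would secure by a pigeonhole-and-coalesce argument rather than by trying to exclude commensurate caps at the selection stage. Because $N$ is uniformly bounded, a diagonal subsequence of $\nu$'s may be chosen along which every ratio $r_\nu^j/r_\nu^k$ and every $|z_\nu^j-z_\nu^k|/r_\nu^j$ converges in $[0,\infty]$. If along this subsequence some pair $(j,k)$ has all three limits finite, then $\mathcal{C}_\nu^j$ and $\mathcal{C}_\nu^k$ are commensurate throughout and both lie in a common cap $\tilde{\mathcal{C}}_\nu$ of radius comparable to $\max(r_\nu^j,r_\nu^k)$; replace the two pieces by their sum $f_\nu^j+f_\nu^k$ attached to $\tilde{\mathcal{C}}_\nu$. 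Since $|\mathcal{C}_\nu^j|\sim|\mathcal{C}_\nu^k|\sim|\tilde{\mathcal{C}}_\nu|$, the pointwise bound is preserved up to a universal constant (absorbed into $C_\delta$), the lower bound persists by retaining the larger of the two $L^2$ masses, and the $L^2$-orthogonality of the whole decomposition is automatic because the original supports were disjoint. Each merger strictly decreases the piece count, so the procedure stabilizes after at most $N$ iterations, leaving a decomposition whose caps are pairwise asymptotically orthogonal. This coalescing scheme is the standard one used in the profile-decomposition literature \cite{Christ-Shao:extremal-for-sphere-restriction-I-existence,Shao:2009,Shao:2016TS}, and the present setting inherits it with only cosmetic changes.
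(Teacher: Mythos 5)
Your proposal follows essentially the same route as the paper's own proof: near-maximal cap selection via Lemma~\ref{le-refinement-of-Tomas-Stein}, level-set truncation to obtain a bounded piece with an $L^2$-mass lower bound (the paper gets the same tail estimate directly via $\int|h_\nu|\le R_\nu^{-1}\|f_\nu\|_2^2$ rather than Chebyshev plus Cauchy--Schwarz), iteration of this step on the disjointly supported remainder until the $L^2$ mass is depleted, and a subsequence-based regrouping of asymptotically commensurate caps to enforce the orthogonality condition. One small misstatement: from $M\gtrsim\beta^{-2}$ you in fact get $C_\delta=M\beta\sim\delta^{-1/\alpha}$, so $C_\delta^{-1}\sim\delta^{1/\alpha}$ is a positive power of $\delta$, not of $\delta^{-1}$, which is what the proposition asserts.
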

The proof uses the refined Strichartz estimate, Lemma \ref{le-refinement-of-Tomas-Stein} and iterations. During the iterations, the orthogonality relation is extracted along subsequences. 
\begin{proof}
{\bf Step 1. }Let $f_\nu$ be such that $\|f_\nu \|_{L^2(\Gamma, \sigma)}=1$ and $0<\delta<1$. Then from Lemma \ref{le-refinement-of-Tomas-Stein}, we see that 
$$ \left( \sup_{\mathcal{C}_\nu} |\mathcal{C}_\nu|^{-1/2} \int_{\mathcal{C}_\nu} |f_\nu| d\sigma\right)^{\alpha} \ge c \delta,$$
for some absolute constant $0<c$. That is to say, 
$$  \sup_{\mathcal{C}_\nu} |\mathcal{C}_\nu|^{-1/2} \int_{\mathcal{C}_\nu} |f_\nu| d\sigma \ge c\delta^{1/\alpha}. $$
Fix a cap $\mathcal{C}_\nu$ such that $$  |\mathcal{C}_\nu|^{-1/2} \int_{\mathcal{C}_\nu} |f_\nu| d\sigma \ge c_0 \delta^{1 /\alpha},$$
for some constant $c_0=\frac c2$. 
That is to say, 
$$  \int_{\mathcal{C}_\nu} |f_\nu| d\sigma \ge c_0 \delta^{1 /\alpha} |\mathcal{C}_\nu|^{1/2}.$$
Let $R_\nu \ge 1$. Define $E_\nu=\{ x\in \mathcal{C}_\nu: |f_\nu(x)|\le R_\nu\}$. Set $g_\nu=f_\nu1_{E_\nu}$ and $h_\nu=f_\nu-f_\nu 1_{E_\nu}$. Then $g_\nu$ and $h_\nu$ have disjoint supports, $g_\nu+h_\nu=f_\nu$, $g_\nu$ is supported on $\mathcal{C}_\nu$, and $\|g_\nu\|_\infty \le R_\nu$. 
Now $|h_\nu(x)| \ge R_\nu$ for almost every $x\in \mathcal{C}_\nu$ for which $h_\nu(x) \neq 0$, so 
$$ \int_{\mathcal{C}_\nu} |h_\nu| \le R_\nu^{-1} \int_{\mathcal{C}_\nu }|h_\nu|^2 \le {R_\nu}^{-1} \|f_\nu\|_2^2 = R_\nu^{-1}. $$
Define $R_\nu$ by $R_\nu^{-1} =\frac {c_0}2 \delta^{1/\alpha} |\mathcal{C}_\nu|^{1/2}$. Then 
$$ \int_{ \mathcal{C}_\nu }|g_\nu| =\int_{ \mathcal{C}_\nu }|f_\nu| -\int_{ \mathcal{C}_\nu }|h_\nu| \ge \frac {c_0}2 \delta^{1/\alpha} |\mathcal{C}_\nu|^{1/2}. $$
By H\"older's inequality, since $g_\nu$ is supported on $\mathcal{C}_\nu$, we have 
$$ \|g_\nu\|_2 \ge |\mathcal{C}_\nu|^{-1/2} \|g_\nu\|_{L^1(\mathcal{C}_\nu)} \ge \frac {c_0}{2}\delta^{1/\alpha} = \frac {c_0}{2}\delta^{a/\alpha} \|f_\nu\|_{L^2}. $$
Then the decomposition $f_\nu =g_\nu+h_\nu$ satisfies the conclusion that, for every $\delta>0$, there exists $C_\delta>0$ and $\eta_\delta >0$, such that for every $\delta$-nearly extremizer, there exists caps $\mathcal{C}_\nu$ such that 
\begin{equation}
\begin{split}
& 0\le |g_\nu|, |h_\nu| \le |f_\nu|,\\
& \text{ the functions } g, h \text{ have disjoint supports}, \\
&  |g_\nu(x)| \le C_\delta \|f_\nu\|_{L^2} |\mathcal{C}_\nu|^{-1/2} 1_{\mathcal{C}_\nu}(x), \\
& \|g_\nu\|_{L^2}\ge \eta_\delta \|f_\nu\|_{L^2}.  
\end{split}
\end{equation}

{\bf Step 2.} It is easy to see that
\begin{equation}\label{uniform-bound}
 \Phi_{\mathcal{C}_{\nu}}^*(g_\nu) (y) =r_\nu^{d/2} \left( g_\nu \circ \Phi_{\mathcal{C}_\nu }\right) (y) \le C_\delta 1_{B(0,1)}(y), \forall y \in \mathbb{R}^d. 
 \end{equation}
 Also we have
 \begin{equation}\label{L2iteration}
 \|f_\nu\|^2_{L^2(\Gamma, \sigma)} = \|f_\nu-g_\nu\|^2_{L^2(\Gamma, \sigma)} +\|g_\nu \|^2_{L^2(\Gamma, \sigma)}
 \end{equation}
 since the supports are disjoint from the sphere $S^d$. We repeat the same arguments with $f_\nu-g_\nu$ in place of $g_\nu$. At each step, the $L^2$-norm decreases of at least $c\delta^{1/\alpha}$. With the same constant $c$ as for the first step. After $N(\delta)$ steps, we obtain $(g_\nu^j)_{1\le j\le N(\delta)}$ and $( \gamma_\nu^j)_{1\le j\le N(\delta)} = (z_\nu^j, r_\nu^j) $ satisfying the relation \eqref{uniform-bound} 
 \begin{equation}\label{iteration2}
 \begin{split}
 f_\nu & = \sum_{j=1}^{N(\delta)} g_\nu^j +q_\nu, \\
 \|\widehat{q_\nu \sigma }\|_{L^{2+\frac 4d} (\mathbb{R}^{d+1})} &\le \delta, \\
  \|f_\nu\|^2_{L^2(\Gamma, \sigma)} & = \sum_{j=1}^{N(\delta)} \|g^j_\nu\|^2_{L^2(\Gamma, \sigma)} +\|q_\nu \|^2_{L^2(\Gamma, \sigma)}.
 \end{split}
 \end{equation}
 To see the orthogonality of parameters, we reorganize the decomposition. We say that $\gamma_\nu^j$ and $\gamma_\nu^k$ are orthogonal if for $j \neq k $, we have 
 \begin{equation}\label{orthogonality}
 \frac {r_\nu^j}{r_\nu^k} + \frac {r_\nu^k}{r_\nu^j} + \frac {| z_\nu^j-z_\nu^k|}{r_\nu^j} \to \infty, \text{ as } \nu\to\infty. 
 \end{equation}
Define 
$$h_\nu^1 =\sum_{j=1}^{N(\delta)} g_\nu^j -\sum_{\gamma_\nu^j\perp \gamma_\nu^1} g_\nu^j. $$
If there exists $2\le j_0\le N(\delta)$ such that $\gamma_\nu^{j_0}\perp \gamma_\nu^1$, then we define 
$$h_\nu^2 = \sum_{j=1}^{N(\delta)} g_\nu^j -\sum_{\gamma_\nu^j\perp \gamma_\nu^1 \atop \gamma_\nu^j\perp \gamma_\nu^{j_0}} g_\nu^j. $$
Repeating this argument a finite number of times, we rearrange the above sum. The $L^2$-norm orthogonality relation in \eqref{iteration2} holds, since the supports of the functions we consider are disjoint on the sphere. The $g_\nu^j$'s kept in the definition $h_\nu^1$ are such that $r_\nu^j$ are not orthogonal one to another. It is sufficiently to show that up to a subsequence, $\Phi_{\mathcal{C}_\nu^1} (g_\nu^j)$ is bounded. By construction, $\Phi_{\mathcal{C}_\nu^j}^{*} (g_\nu^j)$ are bounded by $c_\delta 1_{B(0,1)}(y)$. 

We compute, $\forall  y \in B(0,1))$ and $f$, a bounded function supported on the sphere, 
\begin{equation}\label{close-term}
\begin{split}
\Phi_{\mathcal{C}_\nu^1}^{*} \left(  \bigl( \Phi_{\mathcal{C}_\nu^j}^{*} \bigr)^{-1} f \right)(y) & = ( r_\nu^1)^{d/2} \left[ \bigl(\bigl( \Phi_{\mathcal{C}_\nu^j}^{*}\bigr)^{-1} f \bigr) \circ \Phi_{\mathcal{C}_\nu^1 }\right](y) \\
& = ( r_\nu^1)^{d/2} \bigl(\bigl( \Phi_{\mathcal{C}_\nu^j}^{*}\bigr)^{-1} f \bigr)\bigl( \Pi_{H_{z_\nu^1}}^{-1} \circ L_1^{-1} (r_\nu^1 y) \bigr)  \\
& =\bigl(  \frac {r_\nu^1}{r_\nu^j} \bigr)^{d/2} f\left( (r_\nu^1)^{-1} L_j \left( \Pi_{z_\nu^j}\bigl( \Pi_{H_{z_\nu^1}}^{-1} \circ L_1^{-1} (r_\nu^1 y) \bigr) \right) \right) . 
\end{split}
\end{equation}
This is the Euclidean coordinate representation of the function $f$ on the sphere $\mathbb{R}^d$. We remember that the last transform is through the cap $\mathcal{C}_\nu^j$. Now we show that it is bounded by $C_\delta1_{B(0,1)}(y). $
 If $$ \sup_\nu \left\{ \frac {r_\nu^1}{r_\nu^j} + \frac {r_\nu^j}{r_\nu^1} +\frac {|z_\nu^1-z_\nu^j|}{r_\nu^j} \right\}<\infty, $$
 then up to a subsequence, we may assume that 
$$ \frac {r_\nu^1}{r_\nu^j} \to c_1, $$
for some $0<c_1<\infty$. The same applies to $\frac {|z_\nu^1-z_\nu^j|}{r_\nu^j}$, that is to say, up to a subsequence, 
$$ \frac {|z_\nu^1-z_\nu^j|}{r_\nu^j} \to c_2$$
for some $c_2\ge 0$. Then
$$r_\nu^1\le Cr_\nu^j, |z_\nu^1-z_\nu^j|<Cr_\nu^j. $$
If $z\in \mathcal{C}(z_\nu^1, r_\nu^1)$, then 
$$|z-z_\nu^j|\le Cr_\nu^j. $$
Therefore the term in \eqref{close-term} is less than or equal to 
$$ C_\delta 1_{B(0,1)}(y). $$
The same applies to $\Phi_{\mathcal{C}_\nu^1}^{*} (g_\nu^k)$ for the terms $k$ such that $\gamma_\nu^k \not\perp \gamma_\nu^1. $ We also note that the total terms in the decomposition depends on $\delta$, where the constant $C_\delta$ comes from.  Therefore the proof for the Proposition \ref{prop-first-decomp} is complete. 
\end{proof}

\section{The second decomposition for each piece}\label{earthworm2}
For each component $f_\nu^j$ above, we decompose it further and establish the orthogonality of profiles. 
\subsection{Key propositions}\label{sec-key-prop}
In this section, we may assume that the function $f_\nu^j$ is supported on a cap $\mathcal{C}_\nu^j(z_\nu^j, r_\nu^j)$. The decomposition for $\widehat{f_\nu^j \sigma}$ is motivated by the rescaling relation,
\begin{equation}\label{eq-b5}
\begin{split}
&\left|\widehat{f_\nu^j \sigma}(x,t)\right|=\left|\int_{\mathcal{C}_\nu^j} e^{i(x,t)\cdot\xi}f_\nu^j(\xi)d\sigma(\xi)\right|\\
&=(r_\nu^j)^{d/2}\left|\int_{\mathbb{R}^d} e^{ix\cdot r_\nu^j y+it\sqrt{1-|r_\nu^j y|^2}}\frac {(r_\nu^j)^{d/2}f_\nu^j \circ \Pi^{-1}_{H_{z_\nu^j}} \circ L^{-1}_{z_\nu^j}(r_\nu^j y)}{ \sqrt{1-|r_\nu^j y|^2}}dy\right|\\
&=\left|(r_\nu^j)^{d/2} \int e^{i(r_\nu^j) x\cdot y-\frac {it(r_\nu^j)^2 |y|^2}{2}} e^{i(r_\nu^j)^2 t\bigl(\frac {\sqrt{1-(r_\nu^j)^2 |y|^2}-1}{(r_\nu^j)^2}+
\frac {|y|^2}{2}\bigr)}\frac {1}{(1-(r_\nu^j)^2 |y|^2)^{1/4}}\right. \times  \\
&\left.\qquad \qquad \qquad \qquad \qquad \times \dfrac {(r_\nu^j)^{d/2}f_\nu^j \circ \Pi^{-1}_{H_{z_\nu^j}} \circ L^{-1}_{z_\nu^j}(r_\nu^j y) }{(1-(r_\nu^j)^2 |y|^2)^{1/4}} dy\right|\\
&=\left|(r_\nu^j)^{d/2} e^{\frac {i(r_\nu^j)^2 t \Delta}{2}}\bigl(h_\nu((r_\nu^j)^2 t,\cdot) g^j_\nu(\cdot)\bigr)(r_\nu x)\right|,
\end{split}
\end{equation}where
\begin{equation}\label{eq-b250}
\begin{split}
h_\nu^j(t,y)&:=e^{it\bigl(\frac {\sqrt{1-(r_\nu^j)^2 |y|^2}-1}{(r_\nu^j)^2}+\frac {|y|^2}{2}\bigr)}\frac {1}{(1-(r_\nu^j)^2 |y|^2)^{1/4}},\\
g_\nu^j(y)&:= \dfrac {(r_\nu^j)^{d/2}f_\nu^j \circ \Pi^{-1}_{H_{z_\nu^j}} \circ L^{-1}_{z_\nu^j}(r_\nu^j y) }{(1-(r_\nu^j)^2 |y|^2)^{1/4}}.
\end{split}
\end{equation}
So a decomposition for $\widehat{f_\nu^j \sigma}$ immediately follows once we have a decomposition for $\{g_\nu^j\}$. The definitions in \eqref{eq-b250} is reminiscent of \cite[Equation (2.3)]{Frank-Lieb-Sabin:2007:maxi-sphere-2d}. In what follows in this section, we will suppress the superscripts $j$.

\begin{proposition}\label{prop-decomp}
Let $\{g_\nu\}$ be defined as above and let $C>0$ be the universal constant such that 
\begin{equation}\label{eq-b350}
|g_\nu| \le C1_{B(0,1)}. 
\end{equation}
Then there exists a sequence of functions $\{\phi^j\}_j\in L^2$ and $(x_\nu^j,t_\nu^j)\in \R^d\times \mathbb{R}$ and $e_\nu^l\in L^2$ such that
\begin{equation}\label{eq-b3}
g_\nu(y)=\sum_{j=1}^l e^{\frac {it^j_\nu |y|^2}{2}}e^{-ix^j_\nu y} \phi^j+e^l_\nu(y).
\end{equation} with the following properties: $\phi^j$ can be taken to be smooth functions supported on a ball of radius 1 uniformly in $\nu$. The parameters $\{(x_\nu^j,t_\nu^j)\}$ satisfy, for $k\neq j$,
\begin{equation}\label{eq-b4}
|x_\nu^k-x_\nu^j|+|t_\nu^k-t_\nu^j|\to \infty, \text{ as }\nu\to\infty.
\end{equation}
For each $l\ge 0$,
\begin{equation}\label{eq-L2-ortho}
\limsup_{\nu\to\infty}\left( \|g_\nu\|_{L^2(\Gamma, \sigma)}^2-\sum_{j=1}^l\|\phi^j\|_{L^2(\mathbb{R}^d)}^2-\|e_\nu^l\|^2_{L^2}\right)=0.
\end{equation}where $e_\nu^l$ satisfies
\begin{equation}\label{eq-b8}
\limsup_{l\to\infty}\limsup_{\nu\to\infty}\left\| r_\nu^{d/2} e^{\frac {itr_\nu^2\Delta}{2}}\bigl( h(r_\nu^2 t,\cdot)e_\nu^l\bigr)(r_\nu x)\right\|_{L^{2+4/d}(\mathbb{R}^{d+1})}=0.
\end{equation}where
$$e^{\frac {it\Delta}{2}}f(x)=\int e^{ixy-\frac {it|y|^2}{2}}f(y)dy.$$
\end{proposition}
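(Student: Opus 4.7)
The plan is to perform a Bahouri--Gérard/Bégout--Vargas type profile decomposition for the Schrödinger equation, adapted to the situation where the relevant symmetry group is the two-parameter family of modulations $g\mapsto e^{-ix_0\cdot y}g$ and $g\mapsto e^{it_0|y|^2/2}g$, which correspond under $e^{it\Delta/2}$ to spatial and temporal translations of the Schrödinger flow. Because \eqref{eq-b350} gives $|g_\nu|\le C\mathbf{1}_{B(0,1)}$, the sequence is bounded in $L^2(\mathbb{R}^d)$ with uniformly compact support, so neither a Galilean boost nor an $L^2$-scaling is required in the profile form.

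For the first profile I would work in the $L^{2+4/d}$-invariant variables $(t,x)\mapsto(r_\nu^{-2}t, r_\nu^{-1}x)$, under which the norm in \eqref{eq-b8} becomes $\|e^{it\Delta/2}(h_\nu(t,\cdot)g_\nu)\|_{L^{2+4/d}(\mathbb{R}^{d+1})}$. Assume it exceeds $\delta_0>0$ along a subsequence (else take $l=0$). Applying the Schrödinger analogue of Lemma \ref{le-refinement-of-Tomas-Stein} (whose proof is the verbatim paraboloid version of Section \ref{Archemedian-lever}, using Theorem \ref{thm-2-pro}) to $h_\nu g_\nu$ produces a space-time ball $B((x_c,t_c),O(1))\subset\mathbb{R}^d\times\mathbb{R}$ capturing a fixed fraction $\eta(\delta_0)>0$ of this Strichartz norm. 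Setting $(x_\nu^1,t_\nu^1):=(-x_c,-t_c)$ and $\tilde g_\nu(y):=e^{-it_\nu^1|y|^2/2}e^{ix_\nu^1\cdot y}g_\nu(y)$, the translate $e^{it\Delta/2}\tilde g_\nu$ carries mass $\ge \eta(\delta_0)$ on $B((0,0),O(1))$. Banach--Alaoglu gives $\tilde g_\nu\rightharpoonup\phi^1$ weakly in $L^2(\mathbb{R}^d)$ along a subsequence; the local strong compactness of the Schrödinger evolution on data supported in $B(0,1)$ upgrades the concentration to $\|\phi^1\|_{L^2}\gtrsim\eta(\delta_0)$, and $\phi^1$ can be taken smooth and compactly supported by mollification at arbitrarily small $L^2$-cost.

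Set $e_\nu^1:=g_\nu-e^{it_\nu^1|y|^2/2}e^{-ix_\nu^1\cdot y}\phi^1$. Weak convergence of $\tilde g_\nu$ to $\phi^1$ yields the Pythagoras identity $\|g_\nu\|_{L^2}^2=\|\phi^1\|_{L^2}^2+\|e_\nu^1\|_{L^2}^2+o(1)$. I then iterate on $e_\nu^1$: at each stage either the Strichartz norm falls below $\delta_0$ (terminate at some $l$) or a further profile $\phi^j$ with parameters $(x_\nu^j,t_\nu^j)$ is extracted. The orthogonality \eqref{eq-b4} is enforced by contradiction: if $|x_\nu^j-x_\nu^k|+|t_\nu^j-t_\nu^k|$ stayed bounded along a subsequence, $\phi^k$ would have been absorbed in the weak limit defining $\phi^j$, contradicting either the construction of $e_\nu^{j-1}$ or $\phi^k\ne 0$. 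Since $\sum_j\|\phi^j\|_{L^2}^2\le\sup_\nu\|g_\nu\|_{L^2}^2<\infty$, one has $\|\phi^l\|_{L^2}\to 0$, and \eqref{eq-b8} follows because a persistent Strichartz norm on $e_\nu^l$ would, by the same extraction, produce yet another profile of $L^2$-mass $\gtrsim\eta(\delta_0)$.

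The main technical obstacle is handling the multiplier $h_\nu(r_\nu^2 t,\cdot)$, which is absent from the standard paraboloid profile decomposition. A Taylor expansion gives $(1-\sqrt{1-r_\nu^2|y|^2})/r_\nu^2-|y|^2/2=O(r_\nu^2|y|^4)$ on $|y|\le 1$, so $h_\nu(s,y)=1+O(r_\nu^2(1+|s|))$ on any fixed window in $(s,y)$, while remaining uniformly bounded on $\mathbb{R}\times B(0,1)$. Hence $h_\nu\to 1$ locally in the natural time window that drives the profile extraction, and its residual oscillation is controlled by the dominating Strichartz inequality. The case $r_\nu\ge c>0$ is simpler, since then $f_\nu^j$ lives on a fixed compact region of $S^d$ and direct weak/strong compactness suffices. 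Apart from this harmless correction, the extraction and orthogonality arguments are classical Schrödinger profile decomposition, and a diagonal procedure over $\nu$ and $j$ assembles the iterations into the full statement.
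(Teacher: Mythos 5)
Your overall architecture (iteratively extract translation--modulation profiles via weak $L^2$-limits, use the $L^2$-Pythagoras expansion, argue orthogonality by contradiction, and conclude that if the Strichartz norm of the error persisted you could keep extracting positive $L^2$-mass) is broadly the same strategy the paper takes, which follows Carles--Keraani. However there is a genuine gap in the first extraction step. The ``Schr\"odinger analogue of Lemma~\ref{le-refinement-of-Tomas-Stein}'' is a Bourgain $X_p$ refinement, and what it produces is a \emph{frequency} cube on which $f$ has a nontrivial $L^1$-mean; it does not localize the Strichartz norm to a space-time ball $B((x_c,t_c),O(1))$. In the present proposition the frequency localization has already been spent: the input $g_\nu$ is precisely the rescaled restriction to a single cap, uniformly bounded and supported in $B(0,1)$, so re-running the $X_p$ argument gives you nothing new. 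What actually produces the space-time concentration is the \emph{sub-critical} (beyond Tomas--Stein range) bound $\|e^{it\Delta/2}f\|_{L^q_{t,x}}\lesssim \|f\|_{L^\infty}$ for some $q<2+4/d$ --- Theorem~\ref{le-local-restr} in the paper, coming from Tao's bilinear estimate --- combined with interpolation against $L^{2+4/d}$: uniform boundedness of $e^{it\Delta/2}(h_\nu g_\nu)$ in $L^q$ forces its $L^\infty_{t,x}$ norm to be bounded below whenever the $L^{2+4/d}$ norm is. Without this ingredient, the claim that you capture a fixed fraction $\eta(\delta_0)$ on a unit-sized space-time ball is unsupported.

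For comparison, the paper avoids this issue entirely in the extraction step by working directly with the blow-up criterion $\mu(P^l)=\sup\{\|\phi\|_{L^2}:\phi\in\mathcal{W}(P^l)\}$, a supremum over \emph{all} translation--modulation parameters, and extracts the profile realizing (half of) $\mu$; the space-time-concentration argument (interpolation with $L^q$, plus stationary-phase decay to rule out $|t_\nu^l|\to\infty$) is only needed afterwards, in Step~2, to show that $\mu(P^l)\to 0$ kills the Strichartz norm of the error. That reordering is worth absorbing: it separates the soft compactness (Step~1) from the hard harmonic-analysis input (Step~2) and makes it clear that what forces the Strichartz norm of $e_\nu^l$ to vanish is really the pairing $e^{\frac{it_\nu^l\Delta}{2}}(h_\nu e_\nu^l)(x_\nu^l)=\langle T_\nu^l(e_\nu^l),\, h_\nu(t_\nu^l)\phi\rangle$ against a fixed compactly supported test function $\phi$ (using that $e_\nu^l$ is supported in $B(0,1)$), plus either dispersive decay when $|t_\nu^l|\to\infty$ or $\mu(P^l)\to 0$ when $t_\nu^l$ stays bounded. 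Your remark about the multiplier $h_\nu$ (Taylor expansion, $h_\nu\to 1$ locally) is correct and matches how the paper treats the error in the phase.
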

The following result is for two profiles with the same center $z_\nu^j$ and $r_\nu^j$. More general decomposition is obtained in the next section when we combine Propositions \ref{prop-first-decomp} and \ref{prop-decomp}.
\begin{proposition}[Orthogonality I]\label{prop-ortho}
Let $\{h_\nu\}$ be defined as above and set \begin{equation}\label{eq-b2}
\begin{split}
G_\nu^k(y)& :=e^{\frac {it^k_\nu |y|^2}{2}}e^{-ix^k_\nu y} \phi^k,\\
G_\nu^j(y)& :=e^{\frac {it^j_\nu |y|^2}{2}}e^{-ix^j_\nu y} \phi^j.
\end{split}
\end{equation} where $\{(x_\nu^j,t_\nu^j)\}$ satisfying \eqref{eq-b4}. Then for $k\neq j$,
\begin{equation}\label{eq-b6}
\begin{split}
&\limsup_{\nu\to\infty} \left\| \Bigl(r_\nu^{d/2} e^{\frac {itr_\nu^2\Delta}{2}}\bigl( h_\nu(r_\nu^2 t,\cdot)G_\nu^k\bigr)(r_\nu x)\Bigr)\Bigl(r_\nu^{d/2} e^{\frac {itr_\nu^2\Delta}{2}}\bigl( h_\nu(r_\nu^2 t,y)G_\nu^j \bigr)(r_\nu x)\Bigr) \right\|_{L_{t,x}^{1+2/d}(\mathbb{R}^d)}=0, 
\end{split}
\end{equation}
where the integrand $ \bigl(r_\nu^{d/2} e^{\frac {itr_\nu^2\Delta}{2}}\bigl( h_\nu(r_\nu^2 t,\cdot)G_\nu^k\bigr)(r_\nu x)$ is understood as the Fourier transform of a surface measure on the sphere. 
\end{proposition}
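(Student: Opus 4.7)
The plan is to use Galilean invariance of the free Schr\"odinger propagator to turn the bilinear quantity into a product of two Schr\"odinger evolutions shifted by $(x_\nu^k,t_\nu^k)$ and $(x_\nu^j,t_\nu^j)$, and then combine an $L^2$-density reduction with dispersive decay. First, perform the scaling change of variables $(\tilde t,\tilde x)=(r_\nu^2 t,r_\nu x)$ in the $L^{1+2/d}$ integral; the Jacobian cancels exactly against the $r_\nu^{d/2}$ prefactors, by the Strichartz scaling of the exponent. Then apply the completing-the-square identity
\[
e^{i\tilde t\Delta/2}\bigl(e^{it_\nu^k|y|^2/2}e^{-ix_\nu^k\cdot y}\psi(y)\bigr)(\tilde x)=\bigl[e^{i(\tilde t-t_\nu^k)\Delta/2}\psi\bigr](\tilde x-x_\nu^k),
\]
which holds even when $\psi$ depends on $\tilde t$ (the multiplier $h_\nu(\tilde t,\cdot)$ in $\psi$ simply rides along, since the exponential factors in $G_\nu^k$ do not interact with $h_\nu$). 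Set $(s,z):=(\tilde t-t_\nu^k,\tilde x-x_\nu^k)$, $T_\nu:=t_\nu^k-t_\nu^j$, $X_\nu:=x_\nu^k-x_\nu^j$; by \eqref{eq-b4} we have $|T_\nu|+|X_\nu|\to\infty$. The norm in \eqref{eq-b6} then becomes
\[
\Bigl\|\bigl[e^{is\Delta/2}(h_\nu^{(s)}\phi^k)\bigr](z)\cdot\bigl[e^{i(s+T_\nu)\Delta/2}(h_\nu^{(s)}\phi^j)\bigr](z+X_\nu)\Bigr\|_{L^{1+2/d}_{s,z}(\mathbb{R}^{d+1})},
\]
where $h_\nu^{(s)}:=h_\nu(s+t_\nu^k,\cdot)$.

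Next, since $|h_\nu^{(s)}(y)|=(1-r_\nu^2|y|^2)^{-1/4}$ is uniformly bounded on $\{|y|\le 1\}$ (as $r_\nu\le 1/2$), and since the sphere Tomas--Stein inequality (rescaled via \eqref{eq-b5}) gives the uniform Strichartz-type bound $\|e^{is\Delta/2}(h_\nu^{(s)}\phi)\|_{L^{2+4/d}_{s,z}}\le C\|\phi\|_{L^2}$, a standard $L^2$-density argument (using H\"older $L^{1+2/d}\le L^{2+4/d}\cdot L^{2+4/d}$ on the error terms) reduces the claim to the case where $\phi^k,\phi^j$ are Schwartz functions with Fourier support in $B(0,1)$. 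For such nice profiles, stationary phase in $y$ applied to the phase $zy+t_\nu^k|y|^2/2+(s+t_\nu^k)\omega_\nu(y)$, where $\omega_\nu(y)=(\sqrt{1-r_\nu^2|y|^2}-1)/r_\nu^2=-|y|^2/2+O(r_\nu^2|y|^4)$ has a nondegenerate Hessian on $\{|y|\le 1\}$ uniformly in $\nu$, yields
\[
\bigl|[e^{is\Delta/2}(h_\nu^{(s)}\phi^k)](z)\bigr|\le C_N(1+|s|)^{-d/2}\bigl(1+|z|/(1+|s|)\bigr)^{-N}
\]
for every $N\ge 1$, uniformly in $\nu$. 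Hence the first factor is essentially localized to the cone $\{|z|\le C(1+|s|)\}$, while the shifted second factor is localized to $\{|z+X_\nu|\le C(1+|s+T_\nu|)\}$; as $|T_\nu|+|X_\nu|\to\infty$ these cones become essentially disjoint, and the rapid off-cone decay combined with the dispersive estimate $\|e^{is\Delta/2}\psi\|_\infty\le C|s|^{-d/2}\|\psi\|_1$ on the residual set forces the $L^{1+2/d}$ norm of the product to vanish.

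\textbf{Main obstacle.} The principal difficulty is the time-dependent multiplier $h_\nu^{(s)}$: when $r_\nu$ is not small, the phase $e^{is\omega_\nu(y)}$ genuinely differs from the Schr\"odinger symbol, so Galilean invariance is preserved only up to this weight, and one must verify that the stationary-phase estimate above has constants uniform in $\nu$ and $s$. A secondary subtlety is unifying the regimes $|T_\nu|\to\infty$ (pure time dispersion) and $|X_\nu|\to\infty$ (pure spatial separation) within a single argument, which is achieved by splitting $(s,z)\in\mathbb{R}^{d+1}$ according to which of the two pointwise bounds above is the effective one.
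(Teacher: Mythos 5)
Your proposal is essentially correct and relies on the same core ingredients as the paper's proof of Proposition~\ref{prop-ortho} (Galilean change of variables to isolate the translation parameters, uniform stationary-phase decay of order $|s|^{-d/2}$, rapid decay away from the light cone $|z|\lesssim 1+|s|$), but the way you synthesize them differs from the paper in one genuine way. You push the argument entirely through pointwise bounds on each factor and a disjoint-cones argument; the paper instead splits $\mathbb{R}^{d+1}$ into $\Omega_N^c$, $\Omega_{N,\nu}^c$, and $\Omega_N\cap\Omega_{N,\nu}$ and applies (i) Cauchy--Schwarz $\|FG\|_{L^{1+2/d}}\le\|F\|_{L^{2+4/d}}\|G\|_{L^{2+4/d}}$ on the first two sets, with one factor bounded by Tomas--Stein and the other factor's $L^{2+4/d}$ tail made small uniformly in $\nu$ as $N\to\infty$ precisely via the stationary-phase and off-cone estimates you quote (see \eqref{eq-b17}--\eqref{eq-b19}), and (ii) a trivial $L^\infty$ bound on $\Omega_N\cap\Omega_{N,\nu}$, whose measure shrinks as $\nu\to\infty$ since $|t_\nu^j-t_\nu^k|+|x_\nu^j-x_\nu^k|\to\infty$. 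The paper's route is slightly more economical in that it only needs the $L^{2+4/d}$ tail bound and not the full pointwise cone estimate applied to the product, but your version works, as the integration you need to carry out (integrate first in $z$ to get $\min(1+|s|,1+|s+T_\nu|)^d$, then in $s$) does close and produces an $O(|T_\nu|^{-1})$ bound. Two small remarks: the $L^2$-density reduction is superfluous --- Proposition~\ref{prop-decomp} already gives that $\phi^j,\phi^k$ are smooth and supported in $B(0,1)$ uniformly, which is all the stationary-phase step needs; and you should double-check uniformity in $\nu$ of the pointwise cone bound: after the Galilean identity, the phase is $z\cdot y+s\,\omega_\nu(y)+t_\nu^k(\omega_\nu(y)+|y|^2/2)$, and the last term carries a $y$-dependent oscillation of size $O(t_\nu^k r_\nu^2)$ that both the proposal and the paper's formula below \eqref{eq-b2} silently absorb; the uniform Hessian lower bound needs a line explaining why this residue is harmless.
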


\begin{proof}[Proof of Proposition \ref{prop-decomp}.]
We split the proof into two steps. We will follow the proof in \cite[Proposition 3.4]{Carles-Keraani:2007:profile-schrod-1d} and  present them for sake of completeness.

\textbf{Step 1.} For $(x_\nu,t_\nu)\in \R^{d+1}$, we define
$$T_\nu(g)(y)=e^{-\frac {it_\nu |y|^2}{2}} e^{ix_\nu y}g(y);$$
analogously $T_\nu^i$ for $(x_\nu^i,t_\nu^i)$ for $i\ge 1$, and $T_\nu^{-1}(g)(y)=e^{\frac {it_\nu |y|^2}{2}} e^{-ix_\nu y}g(y)$. Let $P^0$ denote the sequence $\{g_\nu\}_{\nu\ge 1}$. Then we define the set
$$\mathcal{W}(P^0)=\{w-\lim_{\nu\to\infty} T_\nu (P_\nu^0)(y) \text{ in } L^2:\, (x_\nu,t_\nu)\in \R^{d+1}\},$$
where $w-\lim g_\nu$ denotes a weak limit of $\{g_\nu\}$ in $L^2$. Define the blow-up criterion associated to $\mathcal{W}(P^0)$:
$$ \mu(P^0):=\sup\{\|\phi\|_{L^2}: \phi\in \mathcal{W}(P^0)\}.$$
Then for any $\phi\in \mathcal{W}(P^0)$, $\|\phi\|_{L^2} \le \limsup \|g_\nu\|_{L^2(\mathbb{R}^d)}=\limsup \|f_\nu\|_{L^2(\sigma)}$, which folllows from \cite[Theorem 2.11]{Lieb-Loss:2001} about the $L^2$-norm comparison for a weak convergent sequence.

If $\mu(P^0)=0$, then we set $l=0$, and $e_\nu^0=g_\nu $ for all $\nu\ge 1$. Otherwise, $\mu(P^0)>0$, then up to a subsequence, there exists nontrivial $\phi^1\in L^2$ and $(x_\nu^1,t_\nu^1)_{\nu\ge 1}$ such that
\begin{align}
\phi^1 &=w-\lim_{\nu\to\infty}T_\nu^1(P_\nu^0)(y), \\
\|\phi^1\|_{2}&\ge \frac 12 \mu(P^0).
\end{align}Let $P^1$ denote the sequence $\{g_\nu(y)-(T_\nu^1)^{-1}(\phi^1)(y)\}_{\nu\ge 1}$. It is not hard to see that
\begin{align}
&\label{eq-b23} w-\lim_{\nu\to\infty} T_\nu^1(P_\nu^1)=0,\\
&\label{eq-b24} \|g_\nu\|_{L^2(\sigma)}^2 -\|\phi^1\|_2^2-\|e_\nu^1\|^2_{L^2} \to 0, \text{ as }\nu\to\infty.
\end{align}Indeed, \eqref{eq-b23} follows from the definition. For \eqref{eq-b24}, 
$$\|e_\nu^1\|_{L^2}^2=\|T_v^1(g_\nu) -\phi^1\|_{L^2}^2= \|T_\nu^1(g_\nu)- \phi^1\|_{L^2}^2-2\operatorname{Re}\langle T_\nu^1(g_\nu) , \phi^1\rangle +\|\phi^1\|_{L^2}^2.$$
It implies that
$$ \|g_\nu\|_{L^2}^2-2\operatorname{Re}\langle T_\nu^1(g_\nu) , \phi^1\rangle+\|\phi^1\|_{L^2}^2-\|e_\nu^1\|_{L^2}^2 \to \|g_\nu\|_{L^2(\sigma)}^2 -\|\phi^1\|_2^2-\|e_\nu^1\|^2_{L^2} \to 0, \text{ as } \nu\to\infty. $$ 

For $P^1=\{g_\nu(y)-(T_\nu^1)^{-1}(\phi^1)(y)\}_{\nu\ge 1}$, we iteratively consider the set
$$\mathcal{W}(P^1)=\{w-\lim_{\nu\to\infty} T_\nu (P_\nu^1)\text{ in } L^2:\, (x_\nu, t_\nu)\in \R^{d+1}\}.$$
Then we test whether $\mu(P^1)>0$: if $\mu(P^1)=0$, then the algorithm stops; take $l=1$, and $e_\nu^1=P^1_\nu$; if not, then up to a subsequence, there exists nontrivial $\phi^2\in L^2$ and and $(x_\nu^2,t_\nu^2)_{\nu\ge 1}$ such that
\begin{align}
\phi^2 &=w-\lim_{\nu\to\infty}T_\nu^2(P_\nu^1)(y), \\
\|\phi^2\|_{2}&\ge \frac 12 \mu(P^1).
\end{align}
By similar considerations as in \eqref{eq-b23} and \eqref{eq-b24}, if setting $P_\nu^2= P_\nu^1-(T_\nu^2)^{-1}(\phi^2)$, then
\begin{align*}
 & w-\lim_{\nu\to\infty} T_\nu^2(P_\nu^2)=0,\\
 & \|g_\nu\|_{L^2(\sigma)}^2 -\sum_{j=1}^2 \|\phi^j\|_2^2- \|e_\nu^2\|_{L^2}^2 \to 0, \text{ as }\nu\to\infty.
\end{align*}

Now we claim that \eqref{eq-b4} holds. Otherwise, up to a subsequence we may assume that
$$|t_\nu^2-t_\nu^1|+|x_\nu^2-x_\nu^1|\to c, \text{ as }\nu\to\infty,$$for some $0<c<\infty$. In this case, the dominated convergence theorem gives, up to a subsequence,
$$T_\nu^2(T_\nu^1)^{-1} \text{ converges strongly in } L^2.$$
This will imply that \begin{equation}\label{eq-b28}
T_\nu^2(P_\nu^1)\to 0, \text{ weakly in } L^2,
 \end{equation} as $T_\nu^1(P_\nu^1) \to 0$ weakly in $L^2$  and the following relation holds, $$T_\nu^2(P_\nu^1)=T_\nu^2(T_\nu^1)^{-1} \bigl(T_\nu^1(P_\nu^1)\bigr). $$
But the claim in \eqref{eq-b28} is a contradiction to the existence of nontrivial $\phi^2$.  So \eqref{eq-b4} holds.

Iterating this process, a diagonalization argument produces a family of pairwise orthogonal
sequences $(x_\nu^j, t_\nu^j)$ and $\phi^j$ satisfying \eqref{eq-b3}, \eqref{eq-b4} and \eqref{eq-L2-ortho}. Since $\sum_j \|\phi^j\|_2^2\le \sup_\nu \|g_\nu\|_2^2 <\infty$ and $\mu(P^{l+1}) \le 2\|\phi^l\|_2$, we have
\begin{equation}\label{eq-b7}
\mu(P^{l})\to 0, \text{ as } l\to\infty.
\end{equation}
We give the following two remarks. (1). Since $g_\nu$ is supported on the unit ball and bounded, $\phi^j$ can be taken to be smooth functions supported on a ball of radius 1 uniformly in $\nu$. (2). The functions $\{e_\nu^l\}$ is supported in the unit ball and uniformly bounded in $l$ and $\nu$. Note that the proof of \eqref{eq-L2-ortho} yields a more precise result than \eqref{eq-L2-ortho}: for $l\ge 1$ and for $\psi\in L^2\cap L^\infty(\mathbb{R}^d)$,
\begin{equation}\label{eq-L2-ortho1}
\limsup_{\nu\to\infty}\left( \|\psi g_\nu\|_{L^2(\Gamma, \sigma)}^2-\sum_{j=1}^l\|\psi \phi^j\|_{L^2(\mathbb{R}^d)}^2-\|\psi e_\nu^l\|^2_{L^2}\right)=0.
\end{equation}
Then for the support of $\{e_\nu^l\}$, we consider $1_{E}$ for $E\subset \bigl( B(0,1)\bigr)^c$ and the measure of $E$, $|E| >0$, the Lebesgue measure of $E$. Then because $g_\nu$ and $\phi^j$ are supported in $B(0,1)$, 
$$\|1_E e_\nu^l\|_{L^2}=0, \text{ as } \nu\to \infty, $$
which implies $$ e_\nu^l =0,\text{ for sufficiently large } \nu; \text{thus } e_\nu^l \text{ is supported in } B(0,1) \text{ for sufficiently large }\nu. $$
Secondly for the boundedness of $\{e_\nu^l\}$: given any $A>0$, let $$E_A :=\{ x\in B(0,1):\, |e_\nu^l|>A\}. $$
Then we have 
$$ A^2 |E_A| \le \|1_{E_A} e_\nu^l \|_{L^2}^2 \le \|1_{E_A} g_\nu\|_{L^2}^2 \le \|g_\nu\|_{L^\infty}^2 |E_A|. $$
If $|E_A| \neq 0$, then $A\le \|g_\nu\|_{L^\infty}. $ This proves that $\|e_\nu^l\|_{L^\infty} \le \|g_\nu\|_{L^\infty}$ for sufficiently large $\nu$; for the definition of the $L^\infty$ norm, see e.g.,\cite[Page 184]{Folland:1999}. Then a diagonalization argument yields that the functions $\{e_\nu^l\}$ is supported in the unit ball and uniformly bounded in $l$ and $\nu$.

\textbf{Step 2.} At this step, we show that some localized restriction estimate $L^\infty(S^d)\to L^q_{t,x}$ for some $q<2+4/d$ and the information that $\lim \mu(P^l)=0$ will imply \eqref{eq-b8}. The restriction estimate we need is
\begin{mainthm}\label{le-local-restr}
There exists $q<2+4/d$. If $f\in L^\infty(S^d,\sigma)$, then
$$ \|\widehat{f\sigma}\|_{L^q_{t,x}} \le C\|f\|_{L^\infty(S^d, \sigma)}.$$
Similarly if $f\in L^\infty(B(0,R))$, then $$ \|e^{it\Delta}f\|_{L^q_{t,x}} \le C_R\|f\|_{L^\infty}.$$
\end{mainthm}
This follows from the adjoint linear restriction estimate for the sphere beyond Tomas-Stein's inequality in \cite{Tao:2003:paraboloid-restri}.

We continue proving \eqref{eq-b8}. There are still two cases to consider: $r_\nu \to 0$ or $r_\nu\to r_0 >0$. It suffices to consider the case $r_\nu\to 0$ as the other case is similar. Then by scaling, the norm on the left hand side of \eqref{eq-b8} is equivalent to $$\|e^{\frac {it\Delta}{2}}\bigl(h_\nu(t)e_\nu^l\bigr)\|_{L^{2+4/d}_{t,x}}.$$

From the way $e_\nu^l$ is obtained, we may assume that $e_\nu^l$ is compactly supported in the unit ball, and bounded uniformly in $\nu$ and $l$. Then by Lemma \ref{le-local-restr} and the real interpolation \cite[Theorem 1.3]{Stein-Weiss:1971:fourier-analysis}, \eqref{eq-b8} is reduced to
\begin{equation}\label{eq-b11}
\limsup_{l\to\infty}\limsup_{\nu\to\infty} \|e^{\frac {it\Delta}{2}}\bigl(h_\nu(t)e_\nu^l\bigr)\|_{L^\infty_{t,x}} =0.
\end{equation}
This follows from the fact that $\mu(P^l) \to 0$ as $l\to\infty.$ Indeed, there exists $(x^l_\nu,t^l_\nu)$ such that, up to a subsequence,
\begin{equation}\label{eq-b12}
\left|e^{\frac {it_\nu^l\Delta}{2}}\bigl(h_\nu(t_\nu^l)e_\nu^l\bigr)(x_\nu^l)\right|\sim\|e^{\frac {it\Delta}{2}}\bigl(h_\nu(t)e_\nu^l\bigr)\|_{L^\infty_{t,x}}.
\end{equation}
On the other hand, since $e_\nu^l$ is compactly supported, $$ e^{-\frac {it^l_\nu |y|^2}{2}}e^{ix^l_\nu y}h_\nu(t_\nu^l) e_\nu^l(y)=e^{-\frac {it^l_\nu |y|^2}{2}}e^{ix^l_\nu y}h_\nu(t_\nu^l) e_\nu^l(y)\phi(y)$$ for some suitable bump function $\phi$ adapted to the ball $B(0,1)$; taking integration in $y$ on both sides, we have
\begin{equation}\label{eq-b14}
e^{\frac {it_\nu^l\Delta}{2}}\bigl(h_\nu(t_\nu^l)e_\nu^l\bigr) (x_\nu^l)
=\langle e^{-\frac {it^l_\nu |y|^2}{2}}e^{ix^l_\nu y}h_\nu(t_\nu^l,y)e_\nu^l(y),\, \phi\rangle_{L^2}.
\end{equation}
The right hand side of the above is equal to 
$$ \int_{|y|\le 1} e^{ix_\nu^l y + it_\nu^l \frac {\sqrt{1-|r_\nu^l y|^2} -1}{(r_\nu^l)^2}} \frac {e_\nu^l (y) \phi(y)}{(1-|r_\nu^l y|^2)^{1/4}}dy. $$
By using the stationary phase estimate \cite[Page 334, Proposition 3]{Stein:1993}, since $0<r_\nu^l \le \frac 12$ for all $l,\nu$, if for a $l\ge 1$, $|t_\nu^l|\to \infty$ as $\nu\to \infty$, 
$$ \left|\int_{|y|\le 1} e^{ix_\nu^l y + it_\nu^l \frac {\sqrt{1-|r_\nu^l y|^2} -1}{(r_\nu^l)^2}} \frac {e_\nu^l (y) \phi(y)}{(1-|r_\nu^l y|^2)^{1/4}}dy\right| \le C |t_\nu^l|^{-d/2} \to 0, \text{ as } \nu\to \infty. $$ 
There is also the case where $\{t_\nu^l\}_\nu$ is bounded, so up to a subsequence, $\{t_\nu^l\}_\nu$ converges to a fixed number. Then  the sequence of functions $\{e_\nu^l\}_{\nu\ge 1}$ converges strongly in $L^2(\mathbb{R}^d)$ to a fixed function. So if $P^l:=\{e_\nu^l\}_{\nu\ge 1}$, by the definition of $\mu(P^l)$,
\begin{equation}
\text{ LHS }\eqref{eq-b12} \le \mu(P^l) \|\phi\|_{L^2} \to 0, \text{ as } \nu\to\infty,
\end{equation} since $\mu(P^l)\to 0$ as $l\to\infty.$ This finishes the proof of \eqref{eq-b8}. Therefore the proof of Proposition \ref{prop-decomp} is complete.
\end{proof}

Next we show that \eqref{eq-b4} implies the orthogonality result \eqref{eq-b6} in Proposition \ref{prop-ortho}.
\begin{proof}[Proof of \eqref{eq-b6} in Proposition \ref{prop-ortho}.] For given $j,k$, we know that $\phi^j$ and $\phi^k$ are smooth functions compactly supported in some ball of radius 1.

Recall that
$$ e^{\frac {it\Delta}{2}}\bigl( h_\nu(t,y)G_\nu^k\bigr)=\int e^{i(x-x_\nu^k)-\frac {i(t-t_\nu^k)|y|^2}{2}}e^{i(t-t_\nu^k)\bigl(\frac {\sqrt{1-r^2_\nu |y|^2}-1}{r_\nu^2}+\frac {|y|^2}{2}\bigr)} \frac {\phi^k(y)}{(1-r_\nu^2 |y|^2)^{1/4}} dy. $$
Likewise for $e^{\frac {it\Delta}{2}}\bigl( h_\nu(t,y)G_\nu^j\bigr)$. By a change of variables, we need to show
\begin{equation}\label{eq-b15}
\begin{split}
\left\| e^{i\frac {t-(t_\nu^j-t_\nu^k)}{2}\Delta}\Bigl( e^{i\bigl(t-(t_\nu^j-t_\nu^k)\bigr)\bigl(
\frac {\sqrt{1-r_\nu^2 |y|^2}-1}{r_\nu^2}+\frac {|y|^2}{2}\bigr)} \frac {\phi^j(y)}{ (1-r_\nu^2 |y|^2)^{1/4}}  \Bigr)\bigl(x-(x_\nu^j-x_\nu^k)\bigr) \right. \\
\quad \quad \times \left. e^{i\frac {t\Delta}{2}}\Bigl( e^{it\bigl(
\frac {\sqrt{1-r_\nu^2 |y|^2}-1}{r_\nu^2}+\frac {|y|^2}{2}\bigr)} \frac {\phi^k(y)}{(1-r_\nu^2 |y|^2)^{1/4}}  \Bigr)\right\|_{L^{1+2/d}_{t,x}}
\end{split}
\end{equation} goes to zero as $\nu$ goes to infinity.

For a large $N\gg 1$, set
$$\Omega_N:=\{(t,x): |t|+|x|\le N\},\quad \Omega_{N,\nu}:=\Omega_N-(t_\nu^j-t_\nu^k, x_\nu^j-x_\nu^k).$$
We first claim that, for $\Omega=\Omega_N$ or $\Omega_{N,\nu}$,
\begin{equation}\label{eq-b16}
\begin{split}
\int_{\Omega_N^c}\left| e^{i\frac {t-(t_\nu^j-t_\nu^k)}{2}\Delta}\Bigl( e^{i\bigl(t-(t_\nu^j-t_\nu^k)\bigr)\bigl(
\frac {\sqrt{1-r_\nu^2 |y|^2}-1}{r_\nu^2}+\frac {|y|^2}{2}\bigr)} \frac {\phi^j(y)}{ (1-r_\nu^2 |y|^2)^{1/4}}  \Bigr)\bigl(x-(x_\nu^j-x_\nu^k)\bigr) \right. \\
\quad \quad \times \left. e^{i\frac {t\Delta}{2}}\Bigl( e^{it\bigl(
\frac {\sqrt{1-r_\nu^2 |y|^2}-1}{r_\nu^2}+\frac {|y|^2}{2}\bigr)} \frac {\phi^k(y)}{(1-r_\nu^2 |y|^2)^{1/4}}  \Bigr)\right|^{1+\frac 2d} dtdx
\end{split}
\end{equation} goes to zero as $N$ goes to infinity uniformly in $\nu$. Here $\Omega^c:=\R^{d+1}\setminus \Omega.$

We write
\begin{equation}\label{eq-b17}
\begin{split}
& e^{i\frac {t\Delta}{2}}\Bigl( e^{it\bigl(\frac {\sqrt{1-r_\nu^2 |y|^2}-1}{r_\nu^2}+\frac {|y|^2}{2}\bigr)} \frac {\phi^j(y)}{(1-r_\nu^2 |y|^2)^{1/4}} \Bigr)(x) \\
&\quad =\int e^{ixy+it \frac {\sqrt{1-|r_\nu y|^2}-1}{r_\nu^2}}  \frac {\phi^j(y)}{(1-r_\nu^2 |y|^2)^{1/4}} dy.
\end{split}
\end{equation}
We observe that, for $0<r_\nu\le \frac 12$ and all $|y|\le 1$, $\bigl| \partial^2_{y_i }  \frac {\sqrt{1-|r_\nu y|^2}-1}{r_\nu^2}  \bigr|\ge c>0$ for some $c>0$ uniformly in $\nu$. Then by the stationary phase estimate \cite[p.334]{Stein:1993}, the quantity in \eqref{eq-b17} is bounded by  $C|t|^{-d/2}$. 
 On the other hand, by integration by parts, if $|x|>10|t|$, for all sufficiently large $\nu$,
\begin{equation}\label{eq-b18}
\left|e^{i\frac {t\Delta}{2}}\Bigl( e^{it\bigl(\frac {\sqrt{1-r_\nu^2 |y|^2}-1}{r_\nu^2}+\frac {|y|^2}{2}\bigr)}\frac {\phi^j(y)}{(1-r_\nu^2 |y|^2)^{1/4}}  \Bigr)(x)\right|\le C_N|x|^{-N}, \forall N \in \mathbb{N}.
\end{equation} Furthermore there always hold a trivial bound, for all $x,t$,
\begin{equation}\label{eq-b19}
\left|e^{i\frac {t\Delta}{2}}\Bigl( e^{it\bigl(\frac {\sqrt{1-r_\nu^2 |y|^2}-1}{r_\nu^2}+\frac {|y|^2}{2}\bigr)}\frac {\phi^j(y)}{(1-r_\nu^2 |y|^2)^{1/4}}  \Bigr)(x)\right|\le C.
\end{equation}
Here $C$ depends on the function $\phi^j$. 

We are now ready to prove \eqref{eq-b16} when $\Omega=\Omega_N$; the case where $\Omega=\Omega_{N,\nu}$ is similar and so will be omitted. By Cauchy-Schwarz,
\begin{equation}
\begin{split}
\text{ LHS of }\eqref{eq-b16} &\le C \left\| e^{i\frac {t-(t_\nu^j-t_\nu^k)\Delta}{2}}\Bigl( e^{i\bigl(t-(t_\nu^j-t_\nu^k)\bigr)\bigl(
\frac {\sqrt{1-r_\nu^2 |y|^2}-1}{r_\nu^2}+\frac {|y|^2}{2}\bigr)}\frac {\phi^j(y)}{(1-r_\nu^2 |y|^2)^{1/4}}  \Bigr) \bigl(x-(x_\nu^j-x_\nu^k)\bigr)\right\|^{1+2/d}_{L^{2+4/d}} \\
&\qquad \times \left\|e^{i\frac {t\Delta}{2}}\Bigl( e^{it\bigl(
\frac {\sqrt{1-r_\nu^2 |y|^2}-1}{r_\nu^2}+\frac {|y|^2}{2}\bigr)}\frac {\phi^k(y)}{(1-r_\nu^2 |y|^2)^{1/4}} \Bigr)\right\|^{1+2/d}_{L^{2+4/d}_{t,x}(\Omega^c_N)}\\
&\le C \left\| e^{i\frac {t}{2}\Delta}\Bigl( e^{it\bigl(\frac {\sqrt{1-r_\nu^2 |y|^2}-1}{r_\nu^2}+\frac {|y|^2}{2}\bigr)}\frac {\phi^j(y)}{(1-r_\nu^2 |y|^2)^{1/4}} \Bigr)\right\|^{1+2/d}_{L^{2+4/d}}\times \\
&\qquad \times \left\|e^{i\frac {t\Delta}{2}}\Bigl( e^{it\bigl(
\frac {\sqrt{1-r_\nu^2 |y|^2}-1}{r_\nu^2}+\frac {|y|^2}{2}\bigr)}\frac {\phi^k(y)}{(1-r_\nu^2 |y|^2)^{1/4}}  \Bigr)\right\|^{1+2/d}_{L^{2+4/d}_{t,x}(\Omega^c_N)}\\
\end{split}
\end{equation}
On the one hand, by Tomas-Stein's inequality for the sphere and a change of variables  the first term above equals
\begin{equation}\label{eq-b20}
\begin{split}
&\left\|e^{i\frac {t}{2}\Delta}\Bigl( e^{it\bigl(\frac {\sqrt{1-r_\nu^2 |y|^2}-1}{r_\nu^2}+\frac {|y|^2}{2}\bigr)}\frac {\phi^j(y)}{(1-r_\nu^2 |y|^2)^{1/4}}  \Bigr)\right\|_{L^{2+4/d}_{t,x}}\\
&=\left\|\int e^{ixy+it\frac {\sqrt{1-r_\nu^2 |y|^2}}{r_\nu^2}}\frac {\phi^j(y)}{ (1-r_\nu^2 |y|^2)^{1/4}}  dy \right\|_{L^{2+4/d}_{t,x}}\\
&=\left\|\int_{|y|\le 1/2} e^{ixy+it\sqrt{1-|y|^2}} \bigl(r_\nu^{-d/2}\phi^j(r_\nu^{-1}y)( 1-|y|^2)^{1/4} \frac {dy}{\sqrt{1-|y|^2}} \right\|_{L^{2+4/d}_{t,x}}\\
&=\left\|\int e^{i(x,t)\xi} \bigl(r_\nu^{-d/2}\phi^j(r_\nu^{-1}y)  \sqrt{1-|y|^2}\bigr) d\sigma \right\|_{L^{2+4/d}_{t,x}}\\
&\le C \left( \int |r_\nu^{-d/2}\phi^j(r_\nu^{-1}y)|^2 \sqrt{1-|y|^2} d\sigma(y) \right)^{1/2}\\
&\le C \left( \int |r_\nu^{-d/2}\phi^j(r_\nu^{-1}y)|^2 dy \right)^{1/2}=C\|\phi^j\|_{L^2}
\end{split}
\end{equation} for some $C>0$ independent of $\nu$.  Then we have estimates \eqref{eq-b17}, \eqref{eq-b18} and \eqref{eq-b19}, which yields that
\begin{equation}\label{eq-b21}
\left\|e^{i\frac {t\Delta}{2}}\Bigl( e^{it\bigl(
\frac {\sqrt{1-r_\nu^2 |y|^2}-1}{r_\nu^2}+\frac {|y|^2}{2}\bigr)}\frac {\phi^k(y)}{(1-r_\nu^2 |y|^2)^{1/4}} \Bigr)\right\|^{1+2/d}_{L^{2+4/d}_{t,x}(\Omega^c_N)}\to 0, \text{ uniform in } \nu \text{ as } N\to \infty.
\end{equation}
Therefore we have established \eqref{eq-b16}, which can be also established by using the Tomas-Stein inequality for the sphere and the dominated convergence theorem. To finish the proof of \eqref{eq-b15}, we need to show that, for a fixed $N\gg 1$,
\begin{equation}\label{eq-b22}
\begin{split}
\int_{\Omega_N\cap\Omega_{N,\nu}} \left| e^{i\frac {t-(t_\nu^j-t_\nu^k)}{2}\Delta}\Bigl( e^{i\bigl(t-(t_\nu^j-t_\nu^k)\bigr)\bigl(
\frac {\sqrt{1-r_\nu^2 |y|^2}-1}{r_\nu^2}+\frac {|y|^2}{2}\bigr)} \frac {\phi^j(y)}{ (1-r_\nu^2 |y|^2)^{1/4}}  \Bigr)\bigl(x-(x_\nu^j-x_\nu^k)\bigr) \right. \\
\quad \quad \times \left. e^{i\frac {t\Delta}{2}}\Bigl( e^{it\bigl(
\frac {\sqrt{1-r_\nu^2 |y|^2}-1}{r_\nu^2}+\frac {|y|^2}{2}\bigr)} \frac {\phi^k(y)}{ (1-r_\nu^2 |y|^2)^{1/4}}  \Bigr)\right|^{1+\frac 2d} dtdx
\end{split}
\end{equation} goes to zero as $\nu$ goes to infinity. It actually holds as
$$\operatorname{measure}(\Omega_N\cap \Omega_{N,\nu}) \to 0, \text{ as } \lim_{\nu\to\infty} |t_\nu^j-t_\nu^k|+|x_\nu^j-x_\nu^k|=\infty,$$
and we can apply $L^\infty_{t,x}$-bounds to both integrals. Therefore the proof of \eqref{eq-b6} is complete.
\end{proof}

\section{The full decomposition}\label{earthworm3}
The orthogonality result of \eqref{eq-b6} in Proposition \ref{prop-ortho} is established under the assumption that the sequence $\{f_\nu^j\}$ is all supported on the same cap $\mathcal{C}(z_\nu^j,r_\nu^j)$. In this section, we combine Propositions \ref{prop-first-decomp}, \ref{prop-decomp} to obtain a full decomposition for $\widehat{f_\nu\sigma}$. We need a lemma on the adjoint bilinear restriction estimate for paraboloids from Tao \cite{Tao:2003:paraboloid-restri} and Bourgain \cite{Bourgain:1998:refined-Strichartz-NLS}.
\begin{lemma}\label{le-bilinear-sphere}
Assume $f_1$ and $f_2$ are supported on the two caps $\mathcal{C}_1(z_1, r)$ and $\mathcal{C}_2(z_2, r)$ on the sphere $S^d$. Let $N:=\frac {|z_1-z_2|}{r}\gg 1$.  Then for any $q>\frac {d+3}{d+1}$, there exists $\alpha=\alpha(d)>0$, 
\begin{equation}\label{eq-tao-bilinear}
\|\widehat{f_1\sigma} \widehat{f_2\sigma}\|_{L^q_{t,x}} \lesssim \left( \frac {|z_1-z_2|}{r}\right)^{-\alpha}  \|f_1\|_{L^2} \|f_2\|_{L^2}. . 
\end{equation}
\end{lemma}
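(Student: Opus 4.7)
The lemma is a bilinear restriction estimate with transversality gain, to be deduced from Tao's sharp bilinear restriction theorem (Theorem~\ref{thm-1-pro}) by parabolic rescaling. First, I would parametrize each cap $\mathcal{C}_i$ as a graph over its tangent plane via the isometry $\Psi_{z_i}$, obtaining two disks of radius $r$ in $\mathbb{R}^d$. Inside $\Gamma$ the surface-measure Jacobian is bounded above and below by absolute constants, so norms and extensions transfer from the sphere to the disk representation up to harmless factors; moreover, the sphere is uniformly elliptic inside $\Gamma$, so Tao's theorem applies directly (it is valid for any uniformly elliptic surface, not just the exact paraboloid).

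Second, I would rescale parabolically by $r$ in the $d$-dimensional variable and by $(r^{-1}, r^{-2})$ in $(x, t)$. This normalizes each cap to unit size; in the rescaled frequency coordinates, the two cap centers are separated by $N = |z_1-z_2|/r \gg 1$. A further parabolic rescaling by $1/N$ then reduces the supports to cubes of side $1/N$ at unit separation, which is the regime directly covered by Theorem~\ref{thm-1-pro}. To extract the $N^{-\alpha}$ gain, I would apply the theorem in the interpolated form $L^{p_\theta} \times L^{p_\theta} \to L^{q_\theta}$ obtained by Riesz--Thorin interpolation between Theorem~\ref{thm-1-pro} (with $p=2$ and some $q_0 > (d+3)/(d+1)$) and the trivial $L^1 \times L^1 \to L^\infty$ bound, for $\theta \in (0, 1)$. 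H\"older's inequality on supports of measure $N^{-d}$ converts each $\|\cdot\|_{p_\theta}$ into $N^{-d\theta/2}\|\cdot\|_{2}$, producing a net gain of $N^{-d\theta}$ at the rescaled scale. Tracking the scaling factors backward through both rescalings yields the inequality with $\alpha = d\theta > 0$ for the corresponding $q = q_\theta$.

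The main obstacle is the bookkeeping of scaling exponents so that $D = |z_1-z_2|$-dependent factors (arising from the parabolic rescaling as $D^{d - (d+2)/q}$) do not cancel the $N$-gain. This is handled by choosing the interpolation parameter $\theta$ so that $q_\theta$ lies in the range where these $D$-factors are uniformly bounded on the sphere (since $D \le 2$), and by using the small-support H\"older estimate precisely to counterbalance any residual growth. A secondary technical point, that the sphere is not an exact paraboloid, is benign because Tao's bilinear estimate is robust under bounded $C^2$ perturbations of the phase within the elliptic-surface class; the $O(r^2)$ deviation of the sphere from its tangent paraboloid contributes only a bounded modulation factor to each extension operator.
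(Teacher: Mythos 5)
Your proposal is correct and reaches the lemma by a genuinely different route than the paper, so a comparison is worthwhile. The paper's proof works entirely with $L^2$ inputs and interpolates \emph{in the target Lebesgue exponent}: it writes $\|\widehat{f_1\sigma}\widehat{f_2\sigma}\|_{L^{(d+2)/d}}$ as an interpolant between $\|\cdot\|_{L^{q_1}}$ with $\frac{d+3}{d+1}<q_1<\frac{d+2}{d}$ (where Tao's bilinear theorem with $L^2$ inputs, after parabolic rescaling, produces the gain $N^{d-(d+2)/q_1}$, a negative power since $q_1<\frac{d+2}{d}$) and $\|\cdot\|_{L^2}$ (where Bourgain's bilinear $L^2$ restriction estimate produces a gain $N^{-1/2}$), obtaining $\alpha=(\frac{d+2}{q_1}-d)(1-\theta)+\frac{\theta}{2}$. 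You instead interpolate \emph{in the input exponent}: Riesz--Thorin between Tao's $(L^2, L^{q_0})$ estimate and the trivial $(L^1, L^\infty)$ estimate gives a $(L^{p_\theta}, L^{q_\theta})$ bound with $p_\theta<2$, and then H\"older on supports of measure $N^{-d}$ converts $\|\cdot\|_{p_\theta}$ into $N^{-d\theta/2}\|\cdot\|_2$. Your bookkeeping is consistent: choosing $q_\theta=\frac{d+2}{d}$, scale invariance at the Strichartz exponent means the two rescalings introduce no net $N$-factor, and the gain is exactly the H\"older gain $\alpha=d\theta$, with the constraint $q_0=(1-\theta)\frac{d+2}{d}>\frac{d+3}{d+1}$ forcing $\theta<\frac{2}{(d+1)(d+2)}$. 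What each approach buys: yours avoids Bourgain's $L^2$ bilinear estimate entirely (the trivial $L^1\to L^\infty$ endpoint suffices), while the paper's yields a somewhat larger $\alpha$ because the $L^2$ endpoint carries its own transversality gain. In both cases the engine is the same --- Tao's bilinear estimate holds strictly below the Strichartz exponent $\frac{d+2}{d}$, and that subcriticality is what ultimately produces a positive $\alpha$. Two small imprecisions worth tightening in a full write-up: (i) your phrase ``cubes of side $1/N$ at unit separation, which is the regime directly covered by Theorem~\ref{thm-1-pro}'' is not quite right --- Theorem~\ref{thm-1-pro} concerns unit cubes at unit separation, and you are using that the $1/N$-cubes embed in such unit cubes, with the smallness exploited only later via H\"older; (ii) the two caps should be graphed over a single common hyperplane (as the paper does, via a single $\Pi_{H_z}$), not each over its own tangent plane, since the bilinear theorem presupposes a common graph parametrization. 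Neither affects the validity of the argument.
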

\begin{proof}
We recall the definition of the rescaled-map in Definition \ref{rescaled-maps}. We express the sphere surface measure in $\widehat{f\sigma}$ in terms of the plane coordinates, we see that
\begin{align*}
\left| \widehat{f_1\sigma} (t,x) \right| &= \left|  r^{d/2} \int e^{ix\cdot ry+ ir^2t\frac {\sqrt{1-|ry|^2}-1}{r^2}}  \dfrac {r^{d/2}f_1\circ \Pi^{-1}_{H_{z}} \circ L^{-1}_{z}(r y) }{(1-|ry|^2)^{1/4}} dy \right|, \\
\left| \widehat{f_2\sigma} (t,x)\right| &= \left| r^{d/2} \int e^{ix\cdot ry+ ir^2t\frac {\sqrt{1-|ry|^2}-1}{r^2}}  \dfrac {r^{d/2}f_2 \circ \Pi^{-1}_{H_{z}} \circ L^{-1}_{z}(r y) }{(1-|ry|^2)^{1/4}} dy \right|. 
\end{align*} 
Then by the change of variables, we see that 
\begin{align*}
\|\widehat{f_1\sigma} \widehat{f_2\sigma}\|_{L^{\frac {d+2}{d}}_{t,x} (\mathbb{R}\times \mathbb{R}^d)}& 
 =  \|\int e^{ix\cdot y + it \frac {\sqrt{1-|ry|^2}-1}{r^2}}  \dfrac {r^{d/2}f_1\circ \Pi^{-1}_{H_{z}} \circ L^{-1}_{z}(r y) }{(1-|ry|^2)^{1/2}} dy \\
& \quad\quad\times  \int e^{ix\cdot y + it \frac {\sqrt{1-|ry|^2}-1}{r^2}}  \dfrac {r^{d/2}f_2\circ \Pi^{-1}_{H_{z}} \circ L^{-1}_{z}(r y) }{ (1-|ry|^2)^{1/2}}  dy\|_{L^{\frac {d+2}{d}}_{t,x}} .
\end{align*}
By the triangle inequality $|ry_1-r y_2| \sim |z_1 -z_2|+2r$ and $y_i$ are points in balls of radius $1$ for $i=1,2$ with one ball centered at the origin,  $|y_1-y_2| \sim \frac {|z_1-z_2|}{r}+2$, as indicated in Figure \ref{fig-2}. Thus we see the supports of the two functions in the integrands are separated by $N\sim \frac {|z_1-z_2|}{r}$ since $\frac {|z_1-z_2|}{r}\gg 1$. By interpolation, the $L_{t,x}^{\frac {d+2}{d}}$ is controlled by the $L^{q_1}_{t,x}$ with some $\frac {d+3}{d+1}<q_1<\frac {d+2}{d}$ and $L^2_{t,x}$. That is to say, there exists $\theta\in (0,1)$ such that 
$$ \frac {d}{d+2} = \frac {1-\theta}{q_1}+ \frac {\theta}{2}. $$  

\begin{figure}[h]
        \centering
        \includegraphics[width=0.8\textwidth]{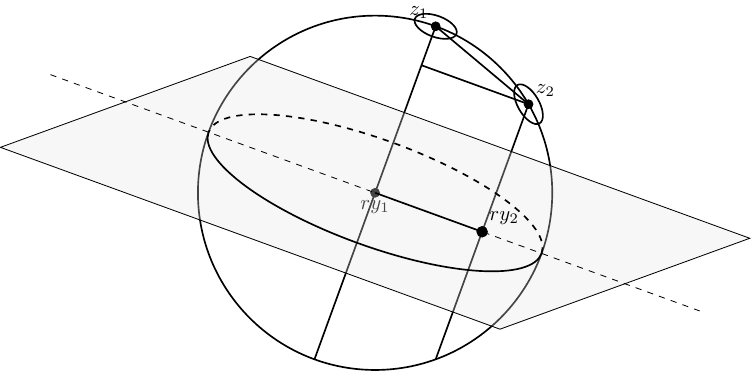}
        \caption{Distant caps interact weakly.}
        \label{fig-2}
    \end{figure}
    
Then by Tao's bilinear restriction estimate for elliptic phases  \cite[The third remark in Section 9]{Tao:2003:paraboloid-restri}, the $L^{q_1}_{t,x}$ norm
\begin{align*}
& \|\int e^{ix\cdot y + it \frac {\sqrt{1-|ry|^2}-1}{r^2}}  \dfrac {r^{d/2}f_1\circ \Pi^{-1}_{H_{z}} \circ L^{-1}_{z}(r y) }{(1-|ry|^2)^{1/2}} dy  \int e^{ix\cdot y + it \frac {\sqrt{1-|ry|^2}-1}{r^2}}  \dfrac {r^{d/2}f_2\circ \Pi^{-1}_{H_{z}} \circ L^{-1}_{z}(r y) }{ (1-|ry|^2)^{1/2}}  dy\|_{L^{q_1}_{t,x}}  \\
 &\lesssim  N^{d-\frac {d+2}{q_1}} \|\dfrac {r^{d/2}f_1\circ \Pi^{-1}_{H_{z}} \circ L^{-1}_{z}(r y) }{(1-|ry|^2)^{1/2}}  \|_{L^2(\mathbb{R}^d)} \|\dfrac {r^{d/2}f_2\circ \Pi^{-1}_{H_{z}} \circ L^{-1}_{z}(r y) }{(1-|ry|^2)^{1/2}}  \|_{L^2(\mathbb{R}^d)} \\
 &\lesssim N^{d-\frac {d+2}{q_1}} \|f_1\|_{L^2(S^d,\sigma)} \|\|_{L^2(S^d,\sigma)}. 
\end{align*}

By the Bourgain bilinear restriction estimate \cite[Lemma 111]{Bourgain:1998:refined-Strichartz-NLS} and \cite[Theorem 4.18]{Killip-Visan:2008:clay-lecture-notes}, the $L^2_{t,x}$ is bounded by
\begin{align*}
& \|\int e^{ix\cdot y + it \frac {\sqrt{1-|ry|^2}-1}{r^2}}  \dfrac {r^{d/2}f_1\circ \Pi^{-1}_{H_{z}} \circ L^{-1}_{z}(r y) }{(1-|ry|^2)^{1/2}} dy  \int e^{ix\cdot y + it \frac {\sqrt{1-|ry|^2}-1}{r^2}}  \dfrac {r^{d/2}f_2\circ \Pi^{-1}_{H_{z}} \circ L^{-1}_{z}(r y) }{ (1-|ry|^2)^{1/2}}  dy\|_{L^2_{t,x}}  \\
 &\lesssim  N^{-\frac 12 } \|\dfrac {r^{d/2}f_1\circ \Pi^{-1}_{H_{z}} \circ L^{-1}_{z}(r y) }{(1-|ry|^2)^{1/2}}  \|_{L^2(\mathbb{R}^d)} \|\dfrac {r^{d/2}f_2\circ \Pi^{-1}_{H_{z}} \circ L^{-1}_{z}(r y) }{(1-|ry|^2)^{1/2}}  \|_{L^2(\mathbb{R}^d)} \\
 &\lesssim N^{-\frac 12} \|f_1\|_{L^2(S^d,\sigma)} \|\|_{L^2(S^d,\sigma)}. 
\end{align*}
Therefore
\begin{align*}
&\|\widehat{f_1\sigma} \widehat{f_2\sigma}\|_{L^{\frac {d+2}{d}}_{t,x} (\mathbb{R}\times \mathbb{R}^d)} \lesssim 
\left(N^{d-\frac {d+2}{q_1}} \|f_1\|_{L^2(S^d,\sigma)} \|\|_{L^2(S^d,\sigma)} \right)^{1-\theta} \left(N^{d-\frac {d+2}{q_1}} \|f_1\|_{L^2(S^d,\sigma)} \|\|_{L^2(S^d,\sigma)} \right)^\theta  \\
&\lesssim N^{\bigl(d-\frac {d+2}{q_1}\bigr)(1-\theta)} N^{-\frac {\theta}{2}}\|f_1\|_{L^2(S^d,\sigma)} \|\|_{L^2(S^d,\sigma)}. 
\end{align*}
We set $\alpha =\bigl(\frac {d+2}{q_1}-d\bigr)(1-\theta) +\frac {\theta}{2}$. Therefore we have proved Proposition \ref{le-bilinear-sphere}. 
\end{proof}
\begin{remark}\label{re-connectCS}
As coined in \cite{Christ-Shao:extremal-for-sphere-restriction-I-existence}, this is known as ``distant caps interact weakly". When $d=2$, $\frac {d+2}{d}=2$, there is no need to choose $q<2$. In this case, the proof above uses Bourgain's bilinear restriction estimate in \cite[Lemma 111]{Bourgain:1998:refined-Strichartz-NLS} and \cite[Theorem 4.18]{Killip-Visan:2008:clay-lecture-notes}. This provides an alternative way to the estimate in \cite[Lemma 7.5]{Christ-Shao:extremal-for-sphere-restriction-I-existence}, where Christ and the first author use the structure of convolution of the sphere surface measures. 
\end{remark}

\begin{proposition}[Orthogonality II]\label{prop-full-decomp}
Let $$\limsup_{\nu\to\infty} \frac {r_\nu^j}{r_\nu^k}+\frac {r_\nu^k}{r_\nu^j}+\frac {|z_\nu^j-z_\nu^k|}{r_\nu^j}=\infty. $$ We have that, for any $N\ge 1$, as $\nu\to\infty$,
\begin{equation}\label{eq-b38}
\lim_{\nu\to\infty} \left\| \Bigl((r_\nu^j)^{d/2} e^{\frac {it(r_\nu^j)^2\Delta}{2}}\bigl( h_\nu((r_\nu^j)^2 t,\cdot)G_\nu^j\bigr)(r_\nu^j x)\Bigr)\Bigl((r_\nu^k)^{d/2} e^{\frac {it(r_\nu^k)^2\Delta}{2}}\bigl( h_\nu((r_\nu^k)^2 t,\cdot)G_\nu^k \bigr)(r_\nu^k x)\Bigr) \right\|_{L^{1+2/d}}=0
\end{equation} for $G_\nu^j$, $G_\nu^k$ defined in \eqref{eq-b2}.  
\end{proposition}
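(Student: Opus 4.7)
The plan is to rewrite the LHS of \eqref{eq-b38} as the bilinear Fourier extension norm $\|\widehat{F_\nu^j\sigma}\cdot\widehat{F_\nu^k\sigma}\|_{L^{(d+2)/d}_{t,x}}$, where $F_\nu^j,F_\nu^k\in L^2(\Gamma,\sigma)$ correspond via \eqref{eq-b5}--\eqref{eq-b250} to the modulated profiles $G_\nu^j,G_\nu^k$ and are supported on the caps $\mathcal{C}(z_\nu^j,r_\nu^j)$ and $\mathcal{C}(z_\nu^k,r_\nu^k)$ respectively. After passing to a subsequence and relabeling so that $r_\nu^j\ge r_\nu^k$, the orthogonality hypothesis forces one of the following: (A) $N_\nu:=|z_\nu^j-z_\nu^k|/r_\nu^j\to\infty$, or (B) $\lambda_\nu:=r_\nu^k/r_\nu^j\to 0$ while $|z_\nu^j-z_\nu^k|/r_\nu^j$ remains bounded. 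I would handle the two cases by entirely different mechanisms.

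In Case A, inflate the smaller cap by regarding $F_\nu^k$ as supported on $\mathcal{C}(z_\nu^k,r_\nu^j)\supset\mathcal{C}(z_\nu^k,r_\nu^k)$, so that both functions live on caps of common radius $r_\nu^j$ with center-separation $|z_\nu^j-z_\nu^k|=r_\nu^j N_\nu$. Since $(d+1)(d+2)>d(d+3)$ gives $(d+2)/d>(d+3)/(d+1)$ for every $d\ge 1$, Lemma \ref{le-bilinear-sphere} applies at $q=(d+2)/d$ and delivers
\begin{equation*}
\|\widehat{F_\nu^j\sigma}\widehat{F_\nu^k\sigma}\|_{L^{(d+2)/d}}\lesssim N_\nu^{-\alpha}\|F_\nu^j\|_{L^2}\|F_\nu^k\|_{L^2}\longrightarrow 0.
\end{equation*}

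Case B is the main obstacle, since the two caps now have incommensurate scales and Lemma \ref{le-bilinear-sphere} no longer directly applies. I would rescale by $(s,y)=((r_\nu^j)^2 t,r_\nu^j x)$ to adapt Euclidean coordinates to the larger cap, and set $\tilde A_\nu^j(s,y):=(r_\nu^j)^{-d/2}\widehat{F_\nu^j\sigma}((r_\nu^j)^{-2}s,(r_\nu^j)^{-1}y)$ and analogously $\tilde A_\nu^k$, so that $\|\widehat{F_\nu^j\sigma}\widehat{F_\nu^k\sigma}\|_{L^{(d+2)/d}_{t,x}}=\|\tilde A_\nu^j\tilde A_\nu^k\|_{L^{(d+2)/d}_{s,y}}$. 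The rescaled $j$-factor is an oscillatory integral against the phase $y\cdot\xi-s\Psi_\nu(\xi)$ with $\Psi_\nu(\xi)=(1-\sqrt{1-(r_\nu^j)^2|\xi|^2})/(r_\nu^j)^2$, whose Hessian is uniformly comparable to the identity on $\{|\xi|\le 1\}$ when $r_\nu^j\le 1/2$; stationary phase therefore yields $|\tilde A_\nu^j(s,y)|\lesssim(1+|s|)^{-d/2}$, so $\sup_\nu\|\tilde A_\nu^j\|_{L^a_{s,y}}<\infty$ for every $a>2+2/d$. The rescaled $k$-factor takes the form $\lambda_\nu^{d/2}e^{i\lambda_\nu^2 s\Delta/2}(h_\nu^k(\lambda_\nu^2 s,\cdot)G_\nu^k)(\lambda_\nu y+y_\nu)$ with $y_\nu$ bounded; by scale invariance $\|\tilde A_\nu^k\|_{L^{2+4/d}_{s,y}}$ stays bounded, while the trivial dispersive bound $\|e^{i\lambda_\nu^2 s\Delta/2}(h_\nu^k(\lambda_\nu^2 s,\cdot)G_\nu^k)\|_{L^\infty_y}\le\|G_\nu^k\|_{L^1}\lesssim 1$ forces $\|\tilde A_\nu^k\|_{L^\infty_{s,y}}\lesssim\lambda_\nu^{d/2}\to 0$.

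Log-convexity of $L^p$ norms then gives $\|\tilde A_\nu^k\|_{L^b_{s,y}}\to 0$ for every $b>2+4/d$. Choose $a\in(2+2/d,2+4/d)$ with $1/a+1/b=d/(d+2)$; this is possible because the conjugate pair $a=b=2+4/d$ satisfies $1/a+1/b=d/(d+2)$, so decreasing $a$ slightly pushes $b$ past $2+4/d$ while keeping $a$ in the admissible range. H\"older's inequality then yields
\begin{equation*}
\|\widehat{F_\nu^j\sigma}\widehat{F_\nu^k\sigma}\|_{L^{(d+2)/d}}=\|\tilde A_\nu^j\tilde A_\nu^k\|_{L^{(d+2)/d}_{s,y}}\le\|\tilde A_\nu^j\|_{L^a}\|\tilde A_\nu^k\|_{L^b}\longrightarrow 0.
\end{equation*}
The conceptual point, and the reason the argument requires more than Cauchy--Schwarz through Strichartz, is that $(d+2)/d=(2+4/d)/2$ saturates the trivial bilinear bound; the extra decay is extracted from the $L^\infty$-decay $\lambda_\nu^{d/2}$---which reflects the concentration of the Fourier data $F_\nu^k$ on a small cap---combined with the slightly-better-than-Strichartz $L^a$-integrability of the larger-scale factor afforded by stationary phase.
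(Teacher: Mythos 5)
Your dichotomy is organized differently from the paper's, and your mechanism in the scale-separation case is genuinely different. The paper's proof splits on \emph{scales}: if $r_\nu^j/r_\nu^k\to\infty$, it rescales to the \emph{smaller} cap's coordinates, shows each factor concentrates (via the Strichartz tail estimate) on a shifted ball $\Omega_\nu^j$ or $\Omega_\nu^k$ of controlled size, and then beats the prefactor $(r_\nu^j/r_\nu^k)^{d/2}$ with a volume bound on $\Omega_\nu^j\cap\Omega_\nu^k$ plus the trivial $L^\infty$ bound $|\widehat{F\sigma}|\le\|F\|_{L^1}$; when the scales are comparable, it invokes Lemma \ref{le-bilinear-sphere}. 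You split on \emph{centers}: if $|z_\nu^j-z_\nu^k|/r_\nu^j\to\infty$ you inflate the smaller cap and apply Lemma \ref{le-bilinear-sphere} directly (this subsumes the paper's third case and is legitimate, since $(d+2)/d > (d+3)/(d+1)$ for all $d\ge 1$ and the $L^2$ norm of the $k$-profile is unchanged by regarding it as living on a larger cap); and in the complementary nested case you pass to the large-cap coordinates and use H\"older with exponents straddling $2+4/d$, trading the vanishing $L^\infty$ norm $\lambda_\nu^{d/2}$ of the small-scale factor against a uniform $L^a$ bound, $a\in(2+2/d,\,2+4/d)$, on the large-scale factor. The H\"older scheme is a conceptually cleaner way to quantify ``the small cap concentrates'' than the paper's localization-plus-volume count, but it demands a better-than-Strichartz integrability of the $j$-factor that the paper's argument never needs: the paper gets away with only the endpoint $L^{2+4/d}$ (for the tail) and $L^\infty$ (for the bulk).

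That extra demand is where I see a gap. You claim that stationary phase against $y\cdot\xi-s\Psi_\nu(\xi)$ gives $|\tilde A_\nu^j(s,y)|\lesssim(1+|s|)^{-d/2}$, but this treats $G_\nu^j=e^{it_\nu^j|\xi|^2/2}e^{-ix_\nu^j\cdot\xi}\phi^j$ as a non-oscillating amplitude. Once the quadratic modulation is included, the full phase is $(y-x_\nu^j)\cdot\xi-s\Psi_\nu(\xi)+\tfrac{t_\nu^j}{2}|\xi|^2$, with $\xi$-Hessian $t_\nu^j I - s\,D^2\Psi_\nu(\xi)$. When $r_\nu^j\to 0$ this is $\approx(t_\nu^j-s)I$, the decay recenters at $s=t_\nu^j$, and translation-invariance of $L^a$ rescues the claim. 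But in the nested case nothing forces $r_\nu^j\to 0$: you may have $r_\nu^j\to c\in(0,1/2]$ with $r_\nu^k\to 0$. Then $D^2\Psi_\nu$ has eigenvalues spread over an interval $[1,C_c]$ with $C_c>1$, and for $s$ in the band $s\in[t_\nu^j/C_c,\,t_\nu^j]$ (of length comparable to $|t_\nu^j|$) the Hessian has a vanishing radial eigenvalue. On that band the nondegenerate-stationary-phase bound $(1+|s-t_\nu^j|)^{-d/2}$ fails, and the na\"ive $L^\infty\lesssim 1$ substitute over a region of $(s,y)$-measure $\gtrsim|t_\nu^j|^{d+1}$ does not yield a uniform-in-$\nu$ $L^a$ bound once $|t_\nu^j|\to\infty$, which the second profile decomposition in no way rules out. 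The claim is very likely salvageable—a Van der Corput estimate in the radial direction combined with nondegenerate stationary phase in the transverse $d-1$ directions gives $|\tilde A_\nu^j|\lesssim|t_\nu^j|^{-(d-1)/2-1/3}$ on the band, which does produce a uniform $L^a$ bound for $a$ in a (nonempty but smaller) subinterval of $(2+2/d,2+4/d)$—but as written your estimate simply omits the modulation from the phase and asserts the clean $(1+|s|)^{-d/2}$ decay, which is not what happens. You should either (i) restrict the stationary-phase route to the subcase $r_\nu^j\to 0$ and handle $r_\nu^j\to c>0$ by the Van der Corput refinement, or (ii) adopt the paper's localization-and-volume mechanism, which circumvents the degenerate Hessian entirely because it never needs an $L^a$ estimate with $a<2+4/d$.
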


\begin{proof}
We assume that, up to subsequences, $\frac{r_\nu^j}{r_\nu^k}\to \infty$ as $\nu\to\infty$. It is further split into two cases.
\begin{itemize}
\item Case 1, $(r_\nu^j,r_\nu^k) \to (0,0)$ as $\nu\to \infty$. We change variables as follows, $r_\nu^k x $ changes to $x$, and $(r_\nu^k)^2 t$ changes to $t$, where we have abused the notations. Then the norm on the left hand side of \eqref{eq-b38} equals
    \begin{equation}\label{eq-b39}
    \begin{split}
    &\left(\frac {r_\nu^j}{r_\nu^k}\right)^{d/2}\left\| e^{it \bigl(\frac {r_\nu^j}{r_\nu^k}\bigr)^2\Delta/2}\bigl( h_\nu(\bigl(\frac {r_\nu^j}{r_\nu^k}\bigr)^2t,y)G_\nu^j\bigr)(\frac {r_\nu^j}{r_\nu^k}x)\Bigr)\,e^{\frac {it\Delta}{2}}\bigl( h_\nu(t,y)G_\nu^k \bigr)(x)\right\|_{L^{1+2/d}}\\
    \quad &=\left(\frac {r_\nu^j}{r_\nu^k}\right)^{d/2}\left\| \int e^{i(\frac {r_\nu^j}{r_\nu^k}x-x_\nu^j)y+i\bigl((\frac {r_\nu^j}{r_\nu^k})^2t-t_\nu^j\bigr)\frac {\sqrt{1-|r_\nu^j y|^2}-1}{(r_\nu^j)^2}} \dfrac  {\phi^j(y)}{( 1-|r_\nu^j y|^2)^{1/4} } dy \right.\\
  &\quad\quad  \times \int \left.e^{i(x-x_\nu^k)y+i\bigl(t-t_\nu^k\bigr)\frac {\sqrt{1-|r_\nu^k y|^2}-1}{(r_\nu^k)^2}}\dfrac {\phi^k (y)}  {(1-|r_\nu^k y|^2)^{1/4} }dy\right\|_{L^{1+2/d}}.
    \end{split}
    \end{equation}
    If $(r_\nu^j,r_\nu^k)\to (0,0)$, then $\frac {\sqrt{1-|r_\nu^j y|^2}-1}{(r_\nu^j)^2} \to -\frac {|y|^2}{2}$ as $\nu\to\infty$; furthermore, we know that $\phi^j$ and $\phi^k$ can be taken to be smooth functions with compact supports in a ball of radius $1$ and uniform $L^\infty$ bound. These can be used via the Strichartz estimate for the quadratic surfaces \cite{Strichartz-1977} to show
    \begin{equation}\label{eq-b40}
    \left\|\int e^{ixy+it\frac {\sqrt{1-|r_\nu^j y|^2}-1}{(r_\nu^j)^2}} \dfrac  {\phi^j(y)}{ ( 1-|y|^2)^{1/4} } dy\right\|_{L^{2+4/d}(|x|+|t|>R)} \to 0,
    \end{equation} as $R\to \infty$, uniformly in $\nu$. Indeed, by the change of variables, we have 
    $$( r_\nu^j)^{-d} \| \int e^{i\frac {x}{r_\nu^j} y+i\frac {t}{(r_\nu^j)^2} \sqrt{1-|y|^2}}   \dfrac  {\phi^{j} (\frac {y}{r_\nu^j}) }{( 1-|\frac {y}{r_\nu^j}|^2)^{1/4} } dy \|_{L^{2+\frac d4}_{t,x} \big(|x|+|t| \ge R \bigr) }. $$
    After changing variables $(r_\nu^j)^{-1}x \to x$ and $(r_\nu^j)^{-2} t \to t$, we see that 
    \begin{equation*}
    \begin{split}
    & \| \int e^{ix\cdot y+i t  \sqrt{1-|y|^2}}   \dfrac  { r_\nu^{-\frac d2}  \phi^{j} (\frac {y}{r_\nu^j}) }{( 1-|\frac {y}{r_\nu^j}|^2)^{1/4} } \sqrt{1-|y|^2}\frac {dy}{\sqrt{1-|y||^2}} \|_{L^{2+\frac d4}_{t,x} \big( r_\nu^j |x|+(r_\nu^j)^2 |t| \ge R \bigr) }\\
     &=\| \int e^{ix\cdot y+i t  \sqrt{1-|y|^2}}    \dfrac  { r_\nu^{-\frac d2}  \phi^{j} (\frac {y}{r_\nu^j}) }{( 1-|\frac {y}{r_\nu^j}|^2)^{1/4} }  \sqrt{1-|y|^2} d\sigma(y) \|_{L^{2+\frac d4}_{t,x} \big( r_\nu^j |x|+(r_\nu^j)^2 |t| \ge R \bigr) }. 
    \end{split}
    \end{equation*}
    The $L^{2+\frac 4d}_{t,x}$ norm is bounded by $\|   \dfrac  { r_\nu^{-\frac d2}  \phi^{j} (\frac {y}{r_\nu^j}) }{( 1-|\frac {y}{r_\nu^j}|^2)^{1/4} }  \sqrt{1-|y|^2}\|_{L^2(S^d, \sigma)}$ by using the Strichartz estimate. If we expand this norm with the use of expressing the sphere surface measure $d\sigma$ in terms of the $y$-coordinates, and change variables, we see that is bounded by  $\|\phi^j\|_{L^2}$. 
    We observe that $$\{r_\nu^j |x|+(r_\nu^j)^2 |t| \ge R \}\subset \{|x|+|t| \ge R/r_\nu^j\} \subset \{|x|+|t| \ge R\}. $$
    since $r_\nu^j \in (0,\frac 12)$. By the Strichartz estimate and the dominated convergence theorem, we see that it converges to zero as $\nu$ goes to infinity. Likewise for the $k$-th term. Hence we are led to consider the integration over the region $\Omega_\nu^j\cap \Omega_\nu^k$, where
    \begin{equation}\label{eq-b41}
    \begin{split}
    \Omega_\nu^j &:= \{\left| \frac {r_\nu^j}{r_\nu^k} x-x_\nu^j\right|+\left|(\frac {r_\nu^j}{r_\nu^k})^2t-t_\nu^j\right|\le R\},\\
     \Omega_\nu^k &:= \{\left|x-x_\nu^k\right|+\left|t-t_\nu^k\right|\le R\}
    \end{split}
    \end{equation}for a large $R>0$. Then continuing \eqref{eq-b39}, we see that it is bounded by
    \begin{equation}\label{eq-b42}
    C_R \|\phi^j\|_1 \|\phi^k\|_1 \left(\frac {r_\nu^j}{r_\nu^k}\right)^{d/2}\min \{ \left(\frac {r_\nu^j}{r_\nu^k}\right)^{-d},\,1\}\le C_R \|\phi^j\|_1 \|\phi^k\|_1  \min\{ \left(\frac {r_\nu^j}{r_\nu^k}\right)^{d/2},\,\left(\frac {r_\nu^j}{r_\nu^k}\right)^{-d/2}\}\to 0,
    \end{equation} as $\nu\to \infty$.

\item Case 2, $(r_\nu^j,r_\nu^k) \to (c,0)$ for some $c>0$. It is similar to Case 1.
\end{itemize}

Then we may assume that $r_\nu^j\equiv r_\nu^k$ for all $\nu$. In this case, up to subsequences, we consider $ \frac {|z_\nu^j-z_\nu^k|}{r_\nu^j}\to \infty$ as $\nu\to \infty$. This condition easily implies $r_\nu^j\to 0$ as $\nu\to \infty$.

As in Case 1 above, we estimate
\begin{equation}\label{eq-b43}
\begin{split}
&\left\| \int e^{i(x-x_\nu^j)y+i((t-t_\nu^j)\frac {\sqrt{1-|r_\nu^j y|^2}-1}{(r_\nu^j)^2}}\phi^j (y)dy\,\int e^{i(x-x_\nu^k)y+i\bigl(t-t_\nu^k\bigr)\frac {\sqrt{1-|r_\nu^k y|^2}-1}{(r_\nu^k)^2}}\phi^k (y)dy\right\|_{L^{1+2/d}} \\
&\lesssim (r_\nu^j)^{-d}\left\| \int e^{i\frac {x-x_\nu^j}{r_\nu^j } y+i \frac { t-t_\nu^j}{( r_\nu^j)^2} \sqrt{1-|y|^2}} ( r_\nu^j)^{-\frac d2}\phi^j (\frac {y}{r_\nu^j})dy\right. \\
&\quad \left. \times \int e^{i\frac {x-x_\nu^k}{r_\nu^j } y+i \frac { t-t_\nu^k}{( r_\nu^j)^2} \sqrt{1-|y|^2} } ( r_\nu^j)^{-\frac d2}\phi^k (\frac {y}{r_\nu^j}) \right\|_{L^{1+2/d}}\\
&\lesssim \left\| \int e^{i( x-(r_\nu^j)^{-1}x_\nu^j )\cdot  y+i( t-(r_\nu^j)^{-2} t_\nu^j ) \sqrt{1-| y|^2} }  ( r_\nu^j)^{-\frac d2}\phi^j (\frac {y}{r_\nu^j})dy\right.  \\
&\quad \times \left. \int e^{i( x-(r_\nu^j)^{-1}x_\nu^k )\cdot  y+i( t-(r_\nu^j)^{-2} t_\nu^k ) \sqrt{1-| y|^2}}  ( r_\nu^j)^{-\frac d2}\phi^j (\frac {y}{r_\nu^j})dy \right\|_{L^{1+2/d}}.
\end{split}
\end{equation}  We recognize it is an adjoint bilinear restriction estimate for sphere, where the the lift-function-to-sphere $\phi^j(\frac {\cdot}{r_\nu^j})$ and $\phi^k(\frac {\cdot}{r_\nu^j})$ are supported in cap $\mathcal{C}_\nu^j$ and $\mathcal{C}_\nu^k$, respectively.  Moreover we observe that 
\begin{equation}\label{eq-b44} R_\nu:= \frac {|z_\nu^j-z_\nu^k|}{r_\nu^j} \to \infty, \text{ as } \nu\to \infty.  
\end{equation} Then by Lemma \ref{le-bilinear-sphere},
\begin{equation}\label{eq-b45}
\begin{split}
& \left\| \int e^{i(x-x_\nu^j)y+i((t-t_\nu^j)\frac {\sqrt{1-|r_\nu^j y|^2}-1}{(r_\nu^j)^2}}\phi^j (y)dy\,\int e^{i(x-x_\nu^k)y+i\bigl(t-t_\nu^k\bigr)\frac {\sqrt{1-|r_\nu^k y|^2}-1}{(r_\nu^k)^2}}\phi^k (y)dy\right\|_{L^{1+\frac 2d}}\\
&\le C R_\nu^{-\alpha} \|\phi^j\|_2\|\phi^k\|_2 \to 0. 
\end{split}
\end{equation} Then by interpolation with the $L^\infty_{t,x}$ estimate, we see that the quantity in \eqref{eq-b43} converges to zero as $\nu\to \infty$.
\end{proof}

\section{A synthesis}\label{earthworm4}
In this section, we will give a complete profile decomposition for $\{f_\nu\}_{\nu}\in L^2(\Gamma, \sigma)$, $d\ge 2$ where $\|f_\nu\|_{L^2(\Gamma, \sigma)}=1$. We will use the idea of a system of equations to express the decomposition. 
\begin{itemize}
\item[1.] As in Proposition \ref{prop-first-decomp}, 
\begin{equation}\label{eq-b46}
f_\nu = \sum_{j=1}^N f_\nu^j +e^N, 
\end{equation} where each $f_\nu^j$ is supported on a cap $\mathcal{C}(z_\nu^j, r_\nu^j)$ with the other properties listed there. 
\item[2.] Motivated by the decomposition \eqref{eq-b5}, as in Proposition \ref{prop-decomp}, 
\begin{equation}\label{eq-b47}
f_\nu^j(x)  = \left( (1-|\cdot|^2)^{1/4} (r_\nu^j)^{-\frac d2} g_\nu^j (\frac {\cdot}{r_\nu^j})\right) \circ L_{z_\nu^j}\circ \Pi_{H_{z_\nu^j}}(x),
\end{equation}
where $g_\nu^j (y) :=\dfrac {(r_\nu^j)^{d/2} f_\nu^j \circ \Pi^{-1}_{H_{z_\nu^j}} L^{-1}_{z_\nu^j}(r_\nu^j y)}{(1-|r_\nu^j y|^2)^{1/4}}$ and $L_z, \Pi_{H_z}$ are defined in Definition \ref{rescaled-maps}. 
\item[3.] As in Proposition \ref{prop-decomp}, 
\begin{equation}\label{eq-b48}
g_\nu^j(y)=\sum_{\alpha=1}^{A_j} e^{\frac {it_\nu^{j,\alpha}|y|^2}{2}} e^{-i x_\nu^{j,\alpha}\cdot y} \phi^{j, \alpha} (y)+e_\nu^{A_j}. 
\end{equation}
\item[4.] Combining all these, we have 
\begin{equation}\label{eq-b49}
\begin{split}
f_\nu(x) &= \sum_{j=1}^N \sum_{\alpha=1}^{A_j} \left[(1-|\cdot|^2)^{\frac 14}(r_\nu^j)^{-\frac d2}\left( e^{\frac {it_\nu^{j,\alpha}|y|^2}{2}} e^{-i x_\nu^{j,\alpha}\cdot y} \phi^{j, \alpha} \right)(\frac {\cdot}{r_\nu^j})\right]\circ L_{z_\nu^j}\circ \Pi_{H_{z_\nu^j}}(x)\\
&\qquad\qquad\qquad  +w_{\nu}^{N, A_1, \cdots A_N}(x),
\end{split}
\end{equation}
where \begin{equation}\label{eq-b50}
w_{\nu}^{N, A_1, \cdots A_N} (x):= e_\nu^N+ \sum_{j=1}^{N} \tilde{e}_\nu^{j, A_j}(x).
\end{equation} Here $\tilde{e}_\nu^{j, A_j}(x) = \left[(1-|\cdot|^2)^{\frac 14}(r_\nu^j)^{-\frac d2} e^{A_j}_\nu (\frac {\cdot}{r_\nu^j})\right]\circ L_{z_\nu^j}\circ \Pi_{H_{z_\nu^j}} (x). 
$
\item[5.] The family $\Gamma_\nu^{j,\alpha}:=(r_\nu^j,z_\nu^j, x_\nu^{j,\alpha}, t_\nu^{j,\alpha})_{1\le j \le N, \, 1\le \alpha\le A_j}$ and $\Gamma_\nu^{k,\beta}:=(r_\nu^k,z_\nu^k, x_\nu^{k,\beta}, t_\nu^{k,\beta})_{1\le k \le N, \, 1\le \beta\le A_k}$ is pairwise orthogonal: 
\begin{equation}\label{eq-51}
\begin{split}
&\forall j\neq k, \,\frac {r_\nu^j}{r_\nu^k}+ \frac {r_\nu^k}{r_\nu^j} + \frac {|z_\nu^j-z_\nu^k|}{r_\nu^j} \to \infty, \text{ as }\nu\to \infty, \\
&\forall \alpha\neq \beta,  |x_\nu^{j,\alpha}-x_\nu^{j,\beta}| +|t_\nu^{j,\alpha}-t_\nu^{j,\beta} |\to \infty, \text{ as } \nu \to \infty. 
\end{split}
\end{equation}
\item[6.] Regarding the error terms, we have the following two estimates, 
\begin{align}
& \|w_\nu^{N, A_1, \cdots, A_N}\|^2_{L^2(S^d, \sigma)} := \sum_{j=1}^N \|\tilde{e}_\nu^{j, A_j}\|^2_{L^2(S^d, \sigma)} + \|e_\nu^N\|^2_{L^2(S^d, \sigma)} = \sum_{j=1}^N \|e_\nu^{j, A_j}\|^2_{L^2(S^d, \sigma)} + \|e_\nu^N\|^2_{L^2(S^d, \sigma)} , \label{eq-b52} \\
& \limsup_{\nu\to \infty} \|\widehat{w_\nu^{N, A_1, \cdots, A_N}\sigma}\|_{L^{2+\frac 4d}(\mathbb{R}\times \mathbb{R}^d)} \to 0, \text{ as } \min_{1\le j\le N} \{N, A_j\}\to \infty.  \label{eq-b53}
\end{align}Equation \eqref{eq-b52} is due to the disjoint supports of the functions on the sphere. For the second, indeed, let $\delta>0$ be an arbitrary small number. Take $N_0$ such that for every $N\ge N_0$, 
\begin{equation}\label{eq-b54}
\limsup_{\nu\to \infty} \|\widehat{e_\nu^N\sigma}\|_{L^{2+\frac 4d}(\mathbb{R}\times \mathbb{R}^d) } \le \frac \delta 3. 
\end{equation}
For every $N\ge N_0$, there exists $B_N$ such that for $A\ge B_N$, 
\begin{equation}\label{eq-55}
\limsup_{\nu\to \infty} \|\widehat{\tilde{e}_\nu^{j, A}\sigma}\|_{L^{2+\frac 4d}(\mathbb{R}\times \mathbb{R}^d) } \le \frac {\delta }{3N}. 
\end{equation}
Then the reminder $w_\nu^{N, A_1, \cdots, A_N}$ can be rewritten in the form 
\begin{equation}\label{eq-b56}
w_\nu^{N, A_1, \cdots, A_N} =e_\nu^N+\sum_{1\le j \le N} w_\nu^{j, A_j \vee B_N} +S_\nu^{N, A_1, \cdots, A_N}. 
\end{equation}Here $A_j\vee B_N = \max\{A_j, B_N\}$ and 
\begin{equation}\label{eq-b57}
S_\nu^{N, A_1, \cdots, A_N} =\sum_{1\le j\le N, \atop A_j<B_N} (w_\nu^{j, A_j} -w_\nu^{j, B_N}),
\end{equation}which is a sum of profiles in the first part of the decomposition in \eqref{eq-b49}. Therefore 
\begin{equation}\label{eq-b58}
\|\widehat{w_\nu^{N, A_1,\cdots, A_N}\sigma}\|_{L^{2+\frac 4d}(\mathbb{R}\times \mathbb{R}^d) } \le \frac {2\delta}{3} +\|\widehat{S_\nu^{N, A_1, \cdots, A_N} }\|_{L^{2+\frac 4d}(\mathbb{R}\times \mathbb{R}^d)} \le \delta,
\end{equation}for sufficiently large $\nu$ due to Proposition \ref{prop-ortho} and Proposition \ref{prop-full-decomp}. 
\end{itemize}
To summarize the above, we have proved a complete profile decomposition. 
\begin{proposition}\label{prop-profiles}
Let $\{f_\nu\}_{1\le \nu\le \infty} \in L^2(\Gamma, \sigma)$ be a sequence of functions with $\|f_\nu\|_{L^2(\Gamma,\sigma)} =1$. Then there exists a sequence of  parameters $$\Gamma_\nu^{j,\alpha}:=(r_\nu^j,z_\nu^j, x_\nu^{j,\alpha}, t_\nu^{j,\alpha})_{1\le j <\infty, \, 1\le \alpha\le A_j} \in (0,\infty)\times \Gamma \times \mathbb{R}^d \times \mathbb{R}$$
satisfying \eqref{eq-51} and $\{ \phi^{j, \alpha}\}_{1\le j <\infty, \, 1\le \alpha\le A_j} \in L^2(\mathbb{R}^d) $ such that 
\begin{equation}\label{eq-59}
\begin{split}
f_\nu(x) &= \sum_{j=1}^N \sum_{\alpha=1}^{A_j} \left[(1-|\cdot|^2)^{\frac 14}(r_\nu^j)^{-\frac d2}\left( e^{\frac {it_\nu^{j,\alpha}|y|^2}{2}} e^{-i x_\nu^{j,\alpha}\cdot y} \phi^{j, \alpha} \right)(\frac {\cdot}{r_\nu^j})\right]\circ L_{z_\nu^j}\circ \Pi_{H_{z_\nu^j}}(x)\\
&\qquad\qquad\qquad  +w_{\nu}^{N, A_1, \cdots A_N}(x). 
\end{split}
\end{equation}
We have the following orthogonality identities. 
\begin{align}
&\|f_\nu\|^2_{L^2(S^d,\sigma)} =\sum_{j=1}^N \sum_{\alpha=1}^{A_j} \|\phi^{j,\alpha}\|^2_{L^2(\mathbb{R}^d)} + \|w_\nu^{N, A_1, \cdots, A_N}\|^2_{L^2(S^d, \sigma)} +\|e_\nu^N\|^2_{L^2(S^d,\sigma)}, \text{ as } \nu\to \infty.  \label{eq-60} \\
&\text{For }j\neq k, \left\| \mathcal{F}\left( \left[(1-|\cdot|^2)^{\frac 14}(r_\nu^j)^{-\frac d2}\left( e^{\frac {it_\nu^{j,\alpha}|y|^2}{2}} e^{-i x_\nu^{j,\alpha}\cdot y} \phi^{j, \alpha} \right)(\frac {\cdot}{r_\nu^j})\right]\circ L_{z_\nu^j}\circ \Pi_{H_{z_\nu^j}} \sigma\right)\right. \label{eq-b61} \\
& \times \left. \mathcal{F}\left( \left[(1-|\cdot|^2)^{\frac 14}(r_\nu^k)^{-\frac d2}\left( e^{\frac {it_\nu^{k,\beta}|y|^2}{2}} e^{-i x_\nu^{k,\beta}\cdot y} \phi^{k, \beta} \right)(\frac {\cdot}{r_\nu^k})\right]\circ L_{z_\nu^k}\circ \Pi_{H_{z_\nu^k}}\sigma\right)\right\|_{L^{1+\frac 2d}_{t,x}(\mathbb{R}\times \mathbb{R}^d)} \to 0, \text{ as } \nu\to \infty. \notag \\
& \limsup_{\nu\to \infty}  \|\mathcal{F} \left( w_\nu^{N, A_1, \cdots, A_N}\sigma\right)\|_{L^{2+\frac 4d}(\mathbb{R}\times \mathbb{R}^d)} \to 0, \text{ as } \min_{1\le j\le N}\{N, A_j\}\to \infty. \label{eq-b62}
\end{align}
Here $\mathcal{F}(f\sigma) (x):=\widehat{f\sigma}(x)= \int_{S^d} e^{ix\cdot \xi} f(\xi)d\sigma(\xi)$. 
\end{proposition}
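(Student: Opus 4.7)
The plan is to assemble Proposition \ref{prop-profiles} by composing the two decomposition layers already established: the cap decomposition of Proposition \ref{prop-first-decomp} (outer layer) and the modulation/translation decomposition of Proposition \ref{prop-decomp} applied to each cap-concentrated piece (inner layer). Since the statement consists of (i) the structural identity \eqref{eq-59}, (ii) the $L^2$-orthogonality \eqref{eq-60}, (iii) the bilinear orthogonality \eqref{eq-b61}, and (iv) the vanishing of the residual in \eqref{eq-b62}, I will handle these in turn.

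First I will apply Proposition \ref{prop-first-decomp} along a subsequence, obtaining $f_\nu = \sum_{j=1}^N f_\nu^j + e_\nu^N$ with each $f_\nu^j$ supported in a cap $\mathcal{C}(z_\nu^j,r_\nu^j)$, with cap parameters pairwise orthogonal, with $\|\widehat{e_\nu^N\sigma}\|_{2+4/d}$ controllable by any prescribed $\delta$, and with the $L^2$-orthogonality coming from disjoint supports on $S^d$. Then, following \eqref{eq-b5}-\eqref{eq-b250}, I pull back each $f_\nu^j$ via the rescaled map $\Phi_{\mathcal{C}_\nu^j}$ to obtain the bounded sequence $g_\nu^j$ on the unit ball, and I apply Proposition \ref{prop-decomp} to extract profiles $\phi^{j,\alpha}$ with parameters $(x_\nu^{j,\alpha},t_\nu^{j,\alpha})$ satisfying \eqref{eq-b4}. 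Pushing this forward through the inverse pullback gives \eqref{eq-59} and the shape of the remainder $w_\nu^{N,A_1,\dots,A_N}$ in \eqref{eq-b50}; iterating the extraction and using a diagonal subsequence produces one infinite family of profiles valid for every $N$ and $A_j$.

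For the $L^2$ identity \eqref{eq-60}, I combine two sources of orthogonality: the supports of the $f_\nu^j$ are disjoint on $S^d$, which gives Pythagoras across $j$, and the modulation/translation orthogonality from Proposition \ref{prop-decomp} (formula \eqref{eq-L2-ortho}) transported back by $\Phi_{\mathcal{C}_\nu^j}^*$, which is an $L^2$-isometry up to the harmless Jacobian factor $\sqrt{1-r_\nu^2|y|^2}\sim 1$ computed in \eqref{sphere-Jacobian}, giving Pythagoras across $\alpha$. This isometric pullback is also what guarantees that the profile $L^2$-norms on $\R^d$ match the $L^2(\sigma)$-mass they represent on the sphere. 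For the bilinear orthogonality \eqref{eq-b61}, I split into two cases: if $j=k$ but $\alpha\ne \beta$, the claim follows directly from Proposition \ref{prop-ortho}; if $j\ne k$, it follows from Proposition \ref{prop-full-decomp}, whose proof invokes Lemma \ref{le-bilinear-sphere} (Tao's bilinear restriction) precisely to exploit the divergence $|z_\nu^j-z_\nu^k|/r_\nu^j\to\infty$ when cap sizes are comparable, and scaling in the remaining cases.

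Finally, for \eqref{eq-b62}, I will reproduce the telescoping argument of \eqref{eq-b54}-\eqref{eq-b58}: given $\delta>0$, choose $N_0$ so that $\limsup_\nu\|\widehat{e_\nu^N\sigma}\|_{2+4/d}\le \delta/3$ for $N\ge N_0$ (via Proposition \ref{prop-first-decomp}), then choose $B_N$ so that the $j$-th inner residual contributes at most $\delta/(3N)$ (via \eqref{eq-b8}), and write
\begin{equation*}
w_\nu^{N,A_1,\dots,A_N} = e_\nu^N + \sum_{j=1}^N w_\nu^{j,A_j\vee B_N} + S_\nu^{N,A_1,\dots,A_N},
\end{equation*}
where $S_\nu$ is a \emph{finite} sum of profiles in the form \eqref{eq-b49}. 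The first two summands are handled directly by the choice of $N_0$ and $B_N$, while for $S_\nu$ one expands the $L^{2+4/d}$ norm and applies Propositions \ref{prop-ortho} and \ref{prop-full-decomp} to see that every cross term vanishes as $\nu\to\infty$, leaving only the diagonal terms controlled by the already-established $L^2$-orthogonality and the Tomas-Stein inequality \eqref{eq-1}.

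The main obstacle is the bookkeeping in step (iv): the two orthogonality regimes (inner, between $(\alpha,\beta)$ within the same cap, and outer, between $(j,k)$ across caps) must interact compatibly when expanding $\|\sum\cdot\|_{2+4/d}^{2+4/d}$, and the use of Lemma \ref{le-bilinear-sphere} is what makes the outer regime work, since the cap parameters are orthogonal but the caps themselves need not be disjoint in any quantitative scale-invariant sense. Once the bilinear $L^{1+2/d}$ smallness is in hand, standard duality (pair each product with a Strichartz norm via H\"older) closes the residual estimate.
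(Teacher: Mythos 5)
Your proposal tracks the paper's Section \ref{earthworm4} essentially step for step: Proposition \ref{prop-first-decomp} for the outer cap decomposition, the pullback \eqref{eq-b5}--\eqref{eq-b250} and Proposition \ref{prop-decomp} for the inner modulation/translation decomposition, Propositions \ref{prop-ortho} and \ref{prop-full-decomp} for the two bilinear orthogonality regimes, and the same telescoping argument \eqref{eq-b54}--\eqref{eq-b58} for the vanishing of the residual. The one place where you supply slightly more detail than the paper is the justification that $\|\widehat{S_\nu\sigma}\|_{2+4/d}$ is small (by expanding the power, using the bilinear orthogonality to kill cross terms, and then bounding the diagonal sum via Tomas--Stein and the $L^2$ orthogonality of the tail profiles), which is exactly the intended reading of the paper's terse invocation of Propositions \ref{prop-ortho} and \ref{prop-full-decomp}.
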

One consequence of \eqref{eq-b61} is the following orthogonality result. The proof follows similar lines as in \cite[Lemma 5.5]{Begout-Vargas:2007:profile-schrod-higher-d}. 
\begin{proposition}\label{le-fish-cute}
Let $N$ and $\{A_j\}_{1\le j\le N}$ be given. 
\begin{align}
& \left\| \sum_{j=1}^N \sum_{1\le \alpha \le A_j} \mathcal{F}\left( \left[(1-|\cdot|^2)^{\frac 14}(r_\nu^j)^{-\frac d2}\left( e^{\frac {it_\nu^{j,\alpha}|y|^2}{2}} e^{-i x_\nu^{j,\alpha}\cdot y} \phi^{j, \alpha} \right)(\frac {\cdot}{r_\nu^j})\right]\circ L_{z_\nu^j}\circ \Pi_{H_{z_\nu^j}}\sigma \right) \right\|^{2+\frac 4d}_{L^{2+\frac 4d}_{t,x} (\mathbb{R}\times \mathbb{R}^d)} \label{eq-b63}\\
&\le  \sum_{j=1}^N \sum_{1\le \alpha \le A_j}  \left\| \mathcal{F}\left( \left[(1-|\cdot|^2)^{\frac 14}(r_\nu^j)^{-\frac d2}\left( e^{\frac {it_\nu^{j,\alpha}|y|^2}{2}} e^{-i x_\nu^{j,\alpha}\cdot y} \phi^{j, \alpha} \right)(\frac {\cdot}{r_\nu^j})\right]\circ L_{z_\nu^j}\circ \Pi_{H_{z_\nu^j}}\sigma \right) \right\|^{2+\frac 4d}_{L^{2+\frac 4d}_{t,x} (\mathbb{R}\times \mathbb{R}^d)} \notag
\end{align}
as $\nu\to\infty.$
\end{proposition}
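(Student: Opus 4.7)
\textbf{Proof proposal for Proposition \ref{le-fish-cute}.}

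Write $q=2+\frac{4}{d}$, so that $q/2=1+\frac{2}{d}$, and abbreviate the $(j,\alpha)$-th profile (the quantity inside the $\mathcal{F}(\cdots\sigma)$) as $F_\nu^{j,\alpha}$, viewing it as a function on $\mathbb{R}\times\mathbb{R}^d$. Let $M=\sum_{j=1}^N A_j$. The plan is to establish the pointwise elementary inequality
\begin{equation*}
\Bigl|\sum_{(j,\alpha)} F_\nu^{j,\alpha}\Bigr|^q \;\le\; \sum_{(j,\alpha)} |F_\nu^{j,\alpha}|^q \;+\; C_{q,M}\sum_{(j,\alpha)\neq(k,\beta)} |F_\nu^{j,\alpha}|^{q-1}\,|F_\nu^{k,\beta}|,
\end{equation*}
which follows by induction on $M$ from the two-term version $(a+b)^q\le a^q+b^q+C_q(a^{q-1}b+ab^{q-1})$ for $a,b\ge 0$ (verified by splitting $b/a\le 1$ vs.\ $b/a\ge 1$ in $(1+b/a)^q\le 1+C_q(b/a)+(b/a)^q$, together with the triangle inequality $|\sum z_i|\le\sum|z_i|$). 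Integrating this inequality over $\mathbb{R}\times\mathbb{R}^d$ reduces the proposition to the claim that each cross-term integral $\int_{\mathbb{R}\times\mathbb{R}^d}|F_\nu^{j,\alpha}|^{q-1}|F_\nu^{k,\beta}|$ tends to $0$ as $\nu\to\infty$ for $(j,\alpha)\ne(k,\beta)$.

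To control each cross term, I would write $|F_\nu^{j,\alpha}|^{q-1}|F_\nu^{k,\beta}|=|F_\nu^{j,\alpha}|^{q-2}\cdot|F_\nu^{j,\alpha}F_\nu^{k,\beta}|$ and apply H\"older with dual exponents $\tfrac{q}{q-2}$ and $\tfrac{q}{2}$ (noting $(q-2)\cdot\tfrac{q}{q-2}=q$ and that $\tfrac{q}{2}=1+\tfrac{2}{d}$ is exactly the exponent appearing in the bilinear bounds \eqref{eq-b6} and \eqref{eq-b61}):
\begin{equation*}
\int |F_\nu^{j,\alpha}|^{q-1}|F_\nu^{k,\beta}|\;\le\;\|F_\nu^{j,\alpha}\|_{L^q}^{\,q-2}\,\bigl\|F_\nu^{j,\alpha}F_\nu^{k,\beta}\bigr\|_{L^{1+2/d}_{t,x}}.
\end{equation*}
The first factor is uniformly bounded in $\nu$: by the Tomas-Stein inequality \eqref{eq-1} it is dominated by $\mathcal{R}^{q-2}$ times the $L^2(\Gamma,\sigma)$-norm of the corresponding sphere profile to the power $q-2$, and the change-of-variables computation in \eqref{sphere-Jacobian} (with the Jacobian $J_y$ comparable to $1$ because $r_\nu^j\le\tfrac12$) shows this $L^2$-norm is comparable to $\|\phi^{j,\alpha}\|_{L^2(\mathbb{R}^d)}$, which is bounded (indeed $\sum_{\alpha}\|\phi^{j,\alpha}\|_{L^2}^2\le 1$ by the $L^2$-orthogonality \eqref{eq-60}). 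The second factor tends to $0$: if $j\ne k$, this is precisely \eqref{eq-b61}, i.e.\ the ``distant/incommensurable caps'' orthogonality of Proposition \ref{prop-full-decomp}; if $j=k$ but $\alpha\ne\beta$, this is \eqref{eq-b6} of Proposition \ref{prop-ortho} since the two profiles share the same $(r_\nu^j,z_\nu^j)$ and are distinguished only by diverging $(x_\nu^{j,\alpha}-x_\nu^{j,\beta},t_\nu^{j,\alpha}-t_\nu^{j,\beta})$. In either case the bilinear norm is $o(1)$ as $\nu\to\infty$, so the entire cross-term contribution is $o(1)$; since $N$ and $A_1,\dots,A_N$ are fixed, the number of cross terms is finite and the conclusion follows.

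The main obstacle is just the careful bookkeeping of the two orthogonality regimes, and in particular recognizing that the two bilinear decay estimates \eqref{eq-b6} and \eqref{eq-b61} are tailored exactly for the $L^{q/2}=L^{1+2/d}$ exponent that emerges from the H\"older split with weight $|F|^{q-2}$. Everything else is elementary: the pointwise inequality is robust, the Tomas-Stein bound gives uniform control of the single-profile $L^q$ norms, and the finiteness of $M$ makes the cross-term sum harmless. No further ingredients beyond Propositions \ref{prop-ortho} and \ref{prop-full-decomp} and the Tomas-Stein inequality itself are needed.
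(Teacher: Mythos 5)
Your proposal is correct and is essentially the argument the paper delegates to \cite[Lemma 5.5]{Begout-Vargas:2007:profile-schrod-higher-d}: the pointwise inequality $|\sum_i a_i|^q \le \sum_i |a_i|^q + C\sum_{i\neq j}|a_i|^{q-1}|a_j|$ reduces the claim to the vanishing of the H\"older-split cross terms, with the $L^{q/(q-2)}$ factor uniformly controlled via Tomas--Stein and the $L^2$ bound on the $\phi^{j,\alpha}$, and the $L^{q/2}=L^{1+2/d}$ factor killed by the two bilinear orthogonality estimates \eqref{eq-b6} and \eqref{eq-b61}. Since $N$ and the $A_j$ are fixed, there are finitely many cross terms, each $o(1)$ as $\nu\to\infty$, giving the conclusion.
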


\section{Conditional existence}\label{Copernicus-meat}
We assume that $\mathcal{R}>\mathcal{R}_\textbf{P}$. This is actually proved in \cite{Frank-Lieb-Sabin:2007:maxi-sphere-2d} by making a real hypothesis that Gaussians are maximizers for the Strichartz inequality for the Schr\"odinger equation. Let us recall one notation from \cite{Frank-Lieb-Sabin:2007:maxi-sphere-2d}. Define the optimal constant for the Strichartz inequality for the Schr\"odinger equation, we see that
\begin{equation}\label{schrodinger}
\mathcal{R}_\mathbf{P} = (2\pi)^{-\frac 12 }\dfrac {\left( \int_{\mathbb{R}^{d+1}} |e^{it\Delta /2} \phi_G(x) |^{2+\frac 4d} dx dt \right)^{\frac {d}{2(d+2)}} }{\|\phi_G\|_{L^2(\mathbb{R})}},
\end{equation} where $\phi_G (x) = e^{-|x|^2/2}$. The \cite[Lemma 7.2]{Frank-Lieb-Sabin:2007:maxi-sphere-2d} shows that $$ \mathcal{R}>\mathcal{R}_{\mathbf{P}}. $$

We first note that when $r_\nu\to 0$, for a compactly supported and smooth function $\phi^j$, 
\begin{equation}\label{eq-b450}
 \int e^{ixy-\frac {it|y|^2}{2}}e^{it\bigl(\frac {\sqrt{1-r_\nu^2 |y|^2}-1}{r_\nu^2}+\frac {|y|^2}{2}\bigr)}\phi^j(y)dy  \to e^{it\Delta/2}\phi^j, 
 \end{equation}
in the $ L^{2+\frac d4}_{t,x} $  Strichartz norm sense, and in the pointwise sense. This follows from the standard stationary phase estimates as in \eqref{eq-b17}, \eqref{eq-b18} and \eqref{eq-b19} and the dominated convergence theorem. 

Now we begin our argument for the conditional existence. By Propositions \ref{prop-profiles} and \ref{le-fish-cute},
\begin{align}
&\limsup_{\nu\to\infty} \|\mathcal{F}(f_\nu \sigma)\|_{L^{2+\frac 4d}_{t,x} (\mathbb{R}\times \mathbb{R}^d)}^{2+4/d} \label{eq-b31} \\
&\le  \sum_{j=1}^N \sum_{1\le \alpha \le A_j} \limsup_{\nu\to\infty} \left\| \mathcal{F}\left( \left[(1-|\cdot|^2)^{\frac 14}(r_\nu^j)^{-\frac d2}\left( e^{\frac {it_\nu^{j,\alpha}|y|^2}{2}} e^{-i x_\nu^{j,\alpha}\cdot y} \phi^{j, \alpha} \right)(\frac {\cdot}{r_\nu^j})\right]\circ L_{z_\nu^j}\circ \Pi_{H_{z_\nu^j}}\sigma \right) \right\|^{2+\frac 4d}_{L^{2+\frac 4d}_{t,x} (\mathbb{R}\times \mathbb{R}^d)} \notag \\
&\le \sum_{j} \limsup_{\nu\to\infty}  \left\|\int e^{i(x-x_\nu^j)y-\frac {i(t-t_\nu^j)|y|^2}{2}}e^{i(t-t_\nu^j)\bigl(\frac {\sqrt{1-(r_\nu^j)^2 |y|^2}-1}{(r_\nu^j)^2}+\frac {|y|^2}{2}\bigr)}\phi^j(y)dy \right\|_{L^{2+\frac 4d}_{t,x} (\mathbb{R}\times \mathbb{R}^d)}^{2+4/d} \notag\\
&=\sum_{j}\limsup_{\nu\to\infty}  \left\|\int e^{ixy-\frac {it|y|^2}{2}}e^{it\bigl(\frac {\sqrt{1-(r_\nu^j)^2 |y|^2}-1}{(r_\nu^j)^2}+\frac {|y|^2}{2}\bigr)}\phi^j(y)dy \right\|_{L^{2+\frac 4d}_{t,x} (\mathbb{R}\times \mathbb{R}^d)}^{2+4/d}. \notag
\end{align}
Here we are abusing the notations of summations when passing from the second line to the third line and also increase the sum to the infinite terms. We consider two cases. If $r_\nu^j\to 0$ for some $j$,
\begin{equation}\label{eq-b32}
\begin{split}
&\limsup_{\nu\to\infty}  \left\|\int e^{ixy-\frac {it|y|^2}{2}}e^{it\bigl(\frac {\sqrt{1-(r_\nu^j)^2 |y|^2}-1}{(r_\nu^j)^2}+\frac {|y|^2}{2}\bigr)}\phi^j(y)dy \right\|_{L^{2+\frac 4d}_{t,x} (\mathbb{R}\times \mathbb{R}^d)}^{2+4/d}\\
&=\|e^{\frac {it\Delta}{2}}\phi^j\|_{2+4/d}^{2+4/d}\le \mathcal{R}^{2+4/d}_{\textbf{P}}\|\phi^j\|_2^{2+4/d} \\ &\le\mathcal{R}^{2+4/d}_{\textbf{P}}\|\phi^j\|_2^{2+4/d}<\mathcal{R}^{2+\frac 4d} \|\phi^j\|^{2+\frac 4d}_{L^2},
\end{split}
\end{equation}by the assumption that $\mathcal{R}>\mathcal{R}_{\mathbf{P}}$. On the other hand, if $r_\nu^j\to r^j$ for some $j$, by the Strichartz estimate,
\begin{equation}\label{eq-b33}
\begin{split}
&\limsup_{\nu\to\infty}\left\|\int e^{ixy-\frac {it|y|^2}{2}}e^{it\bigl(\frac {\sqrt{1-(r_\nu^j)^2 |y|^2}-1}{(r_\nu^j)^2}+\frac {|y|^2}{2}\bigr)}\phi^j(y)dy \right\|_{2+4/d}^{2+4/d}\\
&=\limsup_{\nu\to\infty}\left\|\int e^{ixy+it\frac {\sqrt{1-(r_\nu^j)^2 |y|^2}}{(r_\nu^j)^2}}\phi^j(y)dy \right\|_{2+4/d}^{2+4/d}\\
&=\limsup_{\nu\to\infty}\left\|\int e^{ixy+it\sqrt{1-|y|^2}}\left((r_\nu^j)^{-d/2}\phi^j((r_\nu^j)^{-1}y)\right)dy \right\|_{2+4/d}^{2+4/d} \\
&=\limsup_{\nu\to\infty}\left\|\int e^{ixy+it\sqrt{1-|y|^2}}\left((r_\nu^j)^{-d/2}\phi^j((r_\nu^j)^{-1}y)\sqrt{1-|y|^2}\right)\frac {dy}{\sqrt{1-|y|^2}} \right\|_{2+4/d}^{2+4/d}\\
&=\limsup_{\nu\to\infty}\left\|\int e^{ixy+it\sqrt{1-|y|^2}}\left((r^j)^{-d/2}\phi^j((r^j)^{-1}y)\sqrt{1-|y|^2}\right)\frac {dy}{\sqrt{1-|y|^2}} \right\|_{2+4/d}^{2+4/d}\\
&=\mathcal{R}^{2+4/d} \|(r^j)^{-d/2}\phi^j((r^j)^{-1}y)\sqrt{1-|y|^2}\|_{L^2(\sigma)}^{2+4/d}\\
&=\mathcal{R}^{2+4/d} \|\phi^j(y)(1-|r^jy|^2)^{1/4}\|_{L^2}^{2+4/d}\le \mathcal{R}^{2+4/d} \|\phi^j\|_{L^2}^{2+4/d}.
\end{split}
\end{equation}
Here we have used the Tomas-Stein inequality for the sphere and the dominated convergence theorem in passing the fourth line to the fifth line above. So combining $\eqref{eq-b32}$ and $\eqref{eq-b33}$, we see that
\begin{equation}\label{eq-turtle}
\begin{split}
& \mathcal{R}^{2+4/d} \le   \mathcal{R}^{2+4/d} \sum_{j \text{ in } \eqref{eq-b32}} \|\phi^j\|^{2+\frac {4}{d}}_{L^2}+ \mathcal{R}^{2+4/d} \sum_{j \text{ in } \eqref{eq-b33}} \|\phi^j\|_2^{2+4/d}  \\
&\le \mathcal{R}^{2+4/d} \sum_{j} \|\phi^j\|^{2+\frac {4}{d}}_{L^2}  \le  \mathcal{R}^{2+4/d} \left(\sum_{j} \|\phi^j\|_2^2\right)^{1+2/d}\le \mathcal{R}^{2+4/d}.
\end{split}
\end{equation}
Then $\mathcal{R}^{2+4/d}=\mathcal{R}^{2+4/d}$ forces all the inequalities above to be equal. On the other hand, because $\sum_j \|\phi^j\|_{L^2}^2\le 1$, there will be only one $j$ left from the sharpness of embedding of $\ell^{1+2/d}$ into $\ell^1$. If $r_\nu^j\to r^j>0$ for this $j$, we return to \eqref{eq-b31} and \eqref{eq-b33},  and obtain that $(r^j)^{-d/2}\phi((r^j)^{-1}y)\sqrt{1-|y|^2}$ is an extremal as desired, which is smooth.  If $r_\nu^j\to 0$ for this $j$, then we return to \eqref{eq-b31} and \eqref{eq-b32}, and obtain a contradiction to the assumption $\mathcal{R}>\mathcal{R}_\textbf{P}$. Indeed, because there is one term $\phi^j$ left, 
\begin{align} 
& f_\nu=\left[(1-|\cdot|^2)^{\frac 14}(r_\nu)^{-\frac d2}\left( e^{\frac {it_\nu |y|^2}{2}} e^{-i x_\nu\cdot y} \phi^j \right)(\frac {\cdot}{r_\nu})\right]\circ L_{z_\nu}\circ \Pi_{H_{z_\nu}},\label{eq-Salmon1}\\
&\limsup_{\nu\to \infty} \|\mathcal{F}(f_\nu\sigma)\|^{2+\frac 4d}_{L^{2+\frac 4d}_{t,x}} = \mathcal{R}^{2+\frac 4d} \limsup_{\nu\to\infty} \|f_\nu\|^{2+\frac 4d}_{L^2(S^d, \sigma)} = \mathcal{R}^{2+\frac 4d} \|\phi^j\|^{2+\frac 4d}_{L^2(\mathbb{R}^d)}. \label{eq-Salmon2}
\end{align}

\bibliography{C:/Work/Research/refs}
\bibliographystyle{plain}

\end{document}